\titleformat{\section}{\normalfont\Large\bfseries}{\thesection}{1em}{}
\titleformat{\subsection}{\normalfont\large\bfseries}{\thesubsection}{1em}{}
\theoremstyle{plain}
\newtheorem{thm}{Theorem}[section]
\newtheorem{question}[thm]{Question}
\newtheorem*{question*}{Question}
\newtheorem*{questions*}{Questions}
\newtheorem{lemma}[thm]{Lemma}
\newtheorem*{thm*}{Theorem}
\newtheorem*{prop*}{Proposition}
\newtheorem{prop}[thm]{Proposition}
\newtheorem{cor}[thm]{Corollary}
\theoremstyle{definition}
\newtheorem{defi}[thm]{Definition}
\theoremstyle{remark}
\newtheorem{rmk}[thm]{Remark}
\newtheorem*{exs}{Example}
\DeclareMathOperator{\coker}{coker}
\DeclareMathOperator{\Br}{Br}
\DeclareMathOperator{\Pic}{Pic}
\DeclareMathOperator{\Hom}{Hom}
\DeclareMathOperator{\Gal}{Gal}
\DeclareMathOperator{\Spec}{Spec}
\DeclareMathOperator{\Res}{Res}
\DeclareMathOperator{\im}{Im}
\newcommand{\NS}{\operatorname{NS}}
\newcommand{\Z}{\mathbb{Z}}
\newcommand{\Q}{\mathbb{Q}}
\newcommand{\Pp}{\mathbb{P}}
\newcommand{\A}{\mathbf{A}}
\newcommand{\Oo}{\mathcal{O}}
\newcommand{\et}{\textrm{\'{e}t}}
\newcommand{\fppf}{\textrm{fppf}}
\newcommand{\C}{\mathbb{C}}
\newcommand{\Ff}{\mathbf{F}}
\newcommand{\Tr}{\mathrm{Tr}}
\numberwithin{equation}{section}
\begin{document}
\title{Differential forms and Brauer classes in positive characteristic}
\author{Domenico Valloni}
\affil{\'{E}cole polytechnique f\'{e}d\'{e}rale de Lausanne}
\maketitle
\begin{abstract}
We study $p$-torsion Brauer classes in positive characteristic arising from differential forms. We relate this construction to the Brauer group of supersingular K3 surfaces and analyze the contribution of these classes to the Brauer–Manin obstruction. As an application, we examine the Brauer–Manin set of supersingular K3 surfaces and of varieties admitting many differential forms.\end{abstract}

\section{Introduction}
Let $K$ be a field of characteristic $p>0$ and let $X/K$ be a smooth, projective variety. There has been growing interest in the $p$-primary torsion of the Brauer group of $X$; see, for example, \cite{MR4685668}, \cite{skorobogatov2024boundednesspprimarytorsionbrauer}, \cite{lazda2024boundednesspprimarytorsionbrauer}, \cite{yang2024remarkspprimarytorsionbrauer}, \cite{li2024ptorsionsgeometricbrauergroups} and \cite{grammatica2025brauergroupsabelianvarieties}. It is well known that even the $p$-torsion subgroup of $\Br(X) = H^2_{\et}(X, \mathbb{G}_m)$ can behave in unexpected ways: for instance, if $X/K$ is a supersingular K3 surface then up to a finite extension of $K$ we have $\Br(X)/ \Br(K) \cong (K,+)$, which is not a finitely generated group in general, even when $K$ is finitely generated over its prime subfield. By contrast, for K3 surfaces over number fields, or for K3 surfaces of finite height over global fields, $\Br(X)/\Br(K)$ is always finite; see \cite{MR2395136} and \cite{lazda2025boundednesspprimarytorsionbrauer}. This raises the question \footnote{Posed to the author by Alexei Skorobogatov} of the relevance of such classes in the Brauer-Manin obstruction to the Hasse principle, particularly in connection with Skorobogatov's conjecture \cite{Skorobogatov2009DiagonalQuartic}. 

Our starting point is the observation that the Brauer group of supersingular K3 surfaces can be described in terms of differential forms. While the link between differential forms and Brauer classes in positive characteristic is classical for fields, to our knowledge, this connection has not been used to build Brauer classes on varieties, or to study their role in the Brauer-Manin obstruction. Denote by $\Omega^1_X \coloneqq \Omega^1_{X/\Ff_p}$ the sheaf of absolute K\"ahler differentials of $X$, and by $B_{1,X} \subset \Omega^1_X$ the subsheaf of exact differentials. The subsheaves $B_{n,X} \subset \Omega^1_X$ are then defined inductively (see Section \ref{first section}), with the convention $B_0 = 0$. Each $B_{n,X}$ is a quasi-coherent sheaf of $\Oo_X^{p^n}$-modules (coherent if $K^p \subset K$ is finite). Then one shows (see Section \ref{Section differential Brauer classes}) that for any $n \geq 0$ there is a natural map  
\[
H^0(X, \Omega^1_X / B_{n,X}) \;\longrightarrow\; \Br(X)[p], \quad \omega \mapsto [\omega].
\]  
We denote by $\Br^{\delta}_{n}(X) \subset \Br(X)[p]$ the image of the previous map which we call the subgroup of \emph{differential Brauer classes} of level $n$. This produces a filtration  
\[
\Br^{\delta}_{0}(X) \subset \Br^{\delta}_{1}(X) \subset \cdots \subset \Br^{\delta}_{n}(X) \subset \cdots \subset \Br(X)[p]
\]  
We explain how to compute this filtration over closed fields in the first part of the article. For example, in Theorem \ref{thm Artin level} we use the computations in \cite{MR1776939} to show that if $X$ is a supersingular K3 surface of Artin invariant $a$ over an algebraically closed field $K$, then $\Br^{\delta}_{n}(X) = 0$ for $n < a$ and $\Br^{\delta}_{a}(X) = \Br(X)$.

The feature of the Brauer classes in $\Br^{\delta}(X)$, which we are going to use to study the Hasse principle, is that their contribution to the Brauer-Manin obstruction can be described explicitly. We let $K$ be a global field of characteristic $p>0$ and let $\A_K$ be its adele ring. For each place $v$ of $K$, we write $K_v$ for the completion of $K$ at $v$. Given a global form $\omega \in H^0(X, \Omega^1_X / B_{n,X})$ and a local point $x_v \in X(K_v)$ we define  
\[
\omega_{|x_v} \in \Omega^1_{K_v} / B_{n,K_v}
\]  
as the pullback of $\omega$ via $x_v \colon \Spec(K_v) \to X$.
\begin{thm} \label{theorem 1 intro}
Let $(x_v) \in X(\A_K)$ and $\omega \in H^0(X, \Omega^1_X / B_{n,X})$. Then $(x_v)$ is unobstructed for all Brauer classes of the form $[t^{p^n}\omega] \in \Br(X)[p]$ with $t \in K$ if and only if  
\[
(\omega_{|x_v})_v \in \Omega^1_{\A_K} / B_{n,\A_K} \quad \text{satisfies} \quad C^n(\omega_{|x_v})_v \in \Omega^1_K \subset \Omega^1_{\A_K}.
\]
\end{thm}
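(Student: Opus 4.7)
The plan is to reduce the Brauer--Manin sum $\sum_v \inv_v([t^{p^n}\omega](x_v))$ to an adelic residue pairing on differential forms, and then to invoke the global residue theorem for the function field $K$.

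The first step is to establish an explicit residue formula for the local invariant of a differential Brauer class. For $\eta \in \Omega^1_{K_v}$, i.e.\ the case $n=0$, Kato's theorem identifies $\Br(K_v)[p]$ with a quotient of $\Omega^1_{K_v}$, and a standard residue computation gives $\inv_v([\eta]) = \Tr_{k(v)/\FF_p}\res_v(\eta)$. To extend this to $n \geq 1$, I would verify that the map $H^0(X,\Omega^1/B_n) \to \Br(X)[p]$, restricted to $n$-closed representatives, factors through the iterated Cartier isomorphism $C^n : Z_n/B_n \xrightarrow{\sim} \Omega^1$; equivalently, $[\eta] = [C^n(\eta)]$ in $\Br(K_v)[p]$ whenever $\eta$ is $n$-closed. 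Then at level $n$ the local invariant reads $\inv_v([\eta]) = \Tr_{k(v)/\FF_p}\res_v(C^n(\eta))$.

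Next I would exploit the Frobenius semi-linearity of the Cartier operator: from $C(a^p\omega) = a\cdot C(\omega)$ on closed forms, iteration gives $C^n(t^{p^n}\omega_{|x_v}) = t \cdot C^n(\omega_{|x_v})$ for every place $v$. Setting $\eta_v \coloneqq C^n(\omega_{|x_v}) \in \Omega^1_{K_v}$ and combining with the previous step, the Brauer--Manin unobstructedness condition translates into
\[
\sum_v \Tr_{k(v)/\FF_p} \res_v(t \cdot \eta_v) \;=\; 0 \qquad \text{for every } t \in K.
\]
At this point I would invoke the classical adelic duality for a global function field of characteristic $p$: the pairing $\Omega^1_{\A_K} \times K \to \FF_p$ sending $(\eta,t)$ to $\sum_v \Tr_{k(v)/\FF_p}\res_v(t\eta)$ is non-degenerate with left kernel exactly $\Omega^1_K$ (Weil's residue reciprocity combined with the self-duality of $K \hookrightarrow \A_K$). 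Applied to $(\eta_v)_v$, this yields the claimed equivalence $C^n(\omega_{|x_v})_v \in \Omega^1_K$.

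The hardest step will be the first one: the explicit residue formula for $\inv_v$ at arbitrary level $n$ and the compatibility of the Brauer-class construction $\omega \mapsto [\omega]$ with the iterated Cartier operator. This requires careful bookkeeping with the Bloch--Kato--Gabber exact sequences and with Illusie's de Rham--Witt formalism; in particular, one must ensure that $\omega_{|x_v}$ admits an $n$-closed lift so that $C^n$ is genuinely defined on it, for otherwise the expression $C^n(\omega_{|x_v})_v$ in the statement has to be understood through such a lift. Once this local identification is in place, the second and third steps are essentially formal.
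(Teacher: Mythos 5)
Your proposal is correct and follows essentially the same route as the paper: identify the local invariant with $\Tr_{\Ff(v)/\Ff_p}\circ\Res$ via the explicit description of $\Br(K_v)[p]\cong\Omega^1_{K_v}/(1-C)\Omega^1_{K_v}$, use the $p^{-1}$-semilinearity $C^n(t^{p^n}\eta)=t\,C^n(\eta)$ together with the invariance of $\Tr\circ\Res$ under $C$, and conclude by the adelic residue duality (the paper cites the proof of \cite{MR0670072}[Theorem 2.1.1] for the statement that $\sum_v\Tr(\Res(t\omega))=0$ for all $t\in K$ forces $\omega\in\Omega^1_K$). The only point you flag as delicate --- whether $C^n$ is defined on $\omega_{|x_v}$ --- is automatic here, since $\Omega^2_{K_v}=0$ for $K_v\cong\Ff_q((t))$, so every local $1$-form is $n$-closed and $C^n$ descends to $\Omega^1_{K_v}/B_{n,K_v}$.
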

Here $C \colon \Omega^1_{\A_K} \to \Omega^1_{\A_K}$ is the Cartier operator (see Section \ref{first section} for precise definitions and properties). For example, when $X$ has a zero-cycle of degree one, it turns out that the Brauer-Manin obstruction coming from $\Br^\delta_0(X)$ is intimately related to the $F$-descent obstruction \footnote{As pointed out to us by Felipe Voloch}. On the other hand, the obstruction coming from $\Br^\delta_0(X)$ is strictly stronger than the $F$-descent obstruction whenever the dimension of the Albanese variety of $X$ is strictly smaller than $h^0(X, \Omega^1_X)$ (e.g., Igusa's example). This means that the theorem above for $n=0$ is most useful when studying the Brauer-Manin obstruction for `pathological' varieties in positive characteristic, as we explain in Section \ref{F-descent sub}.

\subsection{Consequences and applications}
We want to apply Theorem \ref{theorem 1 intro} to study the Brauer-Manin set in the presence of a nontrivial differential Brauer class. In Section \ref{sec evaluation map} we briefly introduce the notion of $p^n$-analytic maps and we show, for instance, that the evaluation map
\begin{align}
    X(K_v) &\longrightarrow \Omega^1_{K_v} \\
    x_v &\longmapsto \omega|_{x_v}
\end{align}
induced by any $\omega \in H^0(X, \Omega^1_{X})$ is $p$-analytic. Note that if $\alpha \in \Br(X)$, the behavior of the corresponding evaluation map $\mathrm{ev}_{\alpha} \colon X(K_v) \to \Br(K_v) \cong \Q/\Z$ is not well understood in general (see, for example, the discussion in \cite{zbMATH06154640}) and it is currently the subject of active research, most notably \cite{zbMATH07748318}, \cite{zbMATH07590357} and \cite{ambrosi2025wildbrauerclassesprismatic}. All these articles consider the case of $p$-adic fields. In our situation, the evaluation map admits a much more explicit description, and we refer to Section \ref{sec evaluation map} for precise statements.
\begin{thm} \label{theorem structure BM}
Let $X/K$ be smooth and projective; 
\begin{enumerate}
    \item If the map $H^0(X, \Omega^1_{X}) \to H^0(X, \Omega^1_{X/K})$ is nontrivial, then $X(\A_K)^{\Br(X)[p]}$ is contained in a countable union of closed subsets of the form $\prod_v W_v$, where each $W_v \subset X(K_v)$ is closed and of measure zero;
    \item If moreover for some non-constant $\omega \in H^0(X, \Omega^1_X)$ the evaluation map 
$$\mathrm{ev}_\omega \colon X(\A_K)^{\Br(X)[p]} \rightarrow \Omega^1_K$$
induced by Theorem \ref{theorem 1 intro} has finite image, then $X(K) \subset X(K_v)$ is not dense in the $v$-adic topology for every place $v$. 
\end{enumerate}
\end{thm}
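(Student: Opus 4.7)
The plan is to deduce both parts from Theorem \ref{theorem 1 intro} applied at level $n=0$ (where the Cartier operator $C^0$ is the identity), together with the continuity and non-constancy properties of the $p$-analytic evaluation maps developed in Section \ref{sec evaluation map}.

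\emph{For (1).} Fix $\omega \in H^0(X, \Omega^1_X)$ whose image in $H^0(X, \Omega^1_{X/K})$ is nonzero. For every $t \in K$, the class $[t\omega]$ belongs to $\Br(X)[p]$, so Theorem \ref{theorem 1 intro} forces any point of $X(\A_K)^{\Br(X)[p]}$ to satisfy $(\omega|_{x_v})_v \in \Omega^1_K \subset \Omega^1_{\A_K}$. Writing $W_v(\eta) := \mathrm{ev}_\omega^{-1}(\eta) \subset X(K_v)$, this gives
\[
X(\A_K)^{\Br(X)[p]} \ \subset\ \bigcup_{\eta \in \Omega^1_K}\ \prod_v W_v(\eta).
\]
The indexing set $\Omega^1_K$ is countable because the global field $K$ is countable. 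Each $W_v(\eta)$ is closed, since $\mathrm{ev}_\omega$ is $p$-analytic and hence $v$-adically continuous. Finally, the nontriviality of the image of $\omega$ in $\Omega^1_{X/K}$ together with the smoothness of $X$ implies that $\mathrm{ev}_\omega$ is non-constant on any local analytic chart of $X(K_v)$, so its fibres are proper analytic subsets of the $(\dim X)$-dimensional $K_v$-manifold $X(K_v)$ and therefore have $v$-adic measure zero.

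\emph{For (2).} Under the additional hypothesis, part (1) now restricts the union to a finite subset $S \subset \Omega^1_K$. Since $X(K) \subset X(\A_K)^{\Br(X)[p]}$, the image $\mathrm{ev}_\omega(X(K))$ inside $\Omega^1_{K_v}$ lies in the image of $S$ and is thus finite. If $X(K)$ were $v$-adically dense in $X(K_v)$, then by continuity of $\mathrm{ev}_\omega$ and Hausdorffness of $\Omega^1_{K_v}$, the image $\mathrm{ev}_\omega(X(K_v))$ would be contained in the closure of this finite set and hence finite. But a non-constant $p$-analytic map out of a positive-dimensional smooth $K_v$-manifold has uncountable image, as one can read off from any local power-series expansion. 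This contradiction shows that $X(K)$ cannot be dense in $X(K_v)$.

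\emph{Main obstacle.} The critical step is passing from the scheme-theoretic hypothesis ``$\omega$ is nontrivial in $\Omega^1_{X/K}$'' to the analytic assertion that $\mathrm{ev}_\omega \colon X(K_v) \to \Omega^1_{K_v}$ is non-constant with measure-zero fibres. This should be obtained by choosing a Zariski-open affine patch where $\omega$ admits a local coordinate expression and then invoking the explicit $p$-analytic formulae of Section \ref{sec evaluation map}; keeping track of the exact shape of these formulae, and in particular ruling out pathological degenerations of $\mathrm{ev}_\omega$ on every $K_v$ simultaneously, is the delicate input to the argument.
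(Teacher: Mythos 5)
Your proposal is correct and follows essentially the same route as the paper: the paper's proof is a one-line deduction from Theorem \ref{theorem 1 intro} (pinning $(\omega|_{x_v})_v$ to the countable set $\Omega^1_K$) together with Proposition \ref{main prop evaluation map}, which supplies exactly the ``main obstacle'' you identify, namely that $\mathrm{ev}_\omega$ is $p$-analytic and non-constant around every point whenever $\omega$ has nontrivial image in $H^0(X,\Omega^1_{X/K})$, so its fibres are proper $p$-analytic (hence closed, measure-zero) subsets. Your finish of part (2) via the uncountability of the image of a non-constant $p$-analytic map is an equivalent variant of the paper's implicit Baire-category argument.
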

We remark that if $X_1$ is the Weil-restriction of scalars of $X$ to $K^p \subset K$ then each $W_v \subset X_1(K_v^p) = X(K_v)$ is a proper analytic subvariety. Analogous statements hold for $\Omega^1_X / B_{n,X}$, see Proposition \ref{main prop evaluation map 2}. In particular:
\begin{thm} \label{Theorem Intro K3}
Let $X/K$ be a supersingular K3 surface of Artin invariant $n$ which descends to $K^{p^n} \subset K$. Then $X(\A_K)^{\Br(X)[p]}$ is contained in a countable union of subsets of the form $\prod_v W_v$, where $W_v \subset X(K_v)$ is closed and of measure zero.
\end{thm}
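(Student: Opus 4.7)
The plan is to reduce the theorem to Proposition \ref{main prop evaluation map 2}---the level-$n$ analogue of Theorem \ref{theorem structure BM}(1) for $\Omega^1_X / B_{n,X}$---by producing enough global sections of $\Omega^1_X / B_{n,X}$ whose associated Brauer classes, together with their $t^{p^n}$-twists, span $\Br(X)[p]$ modulo $\Br(K)$. The two inputs I will combine are the computation of the differential filtration for supersingular K3 surfaces in Theorem \ref{thm Artin level} and the descent hypothesis $X \cong X_0 \times_{K^{p^n}} K$.

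First, over an algebraic closure $\overline{K}$, Theorem \ref{thm Artin level} gives $\Br(X_{\overline{K}}) = \Br^\delta_n(X_{\overline{K}})$, so every $p$-torsion Brauer class on $X_{\overline{K}}$ is represented by a section of $\Omega^1 / B_n$. The role of the descent hypothesis is to realise these classes by genuinely global sections on $X$: pulling back along the canonical map $X \to X_0$ produces elements of $H^0(X, \Omega^1_X / B_{n,X})$, and the purely inseparable extension $K^{p^n} \hookrightarrow K$ of exponent $n$ is precisely what allows such a lift to exist. Concretely, although $H^0(X, \Omega^1_{X/K}) = 0$ for any K3 surface, the level-$n$ quotient $H^0(X, \Omega^1_X / B_{n,X})$ still sees these classes, and descending via $X_0$ provides forms whose image in the relative sheaf appearing in Proposition \ref{main prop evaluation map 2} is nontrivial.

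With a finite family $\omega_1,\dots,\omega_N \in H^0(X, \Omega^1_X / B_{n,X})$ chosen so that the Brauer classes $[t^{p^n}\omega_i]$ with $t \in K$ and $1 \le i \le N$ generate $\Br(X)[p]$ modulo $\Br(K)$, I apply Proposition \ref{main prop evaluation map 2} to each $\omega_i$. By Theorem \ref{theorem 1 intro}, being unobstructed by all $[t^{p^n}\omega_i]$ forces $(\omega_i|_{x_v})_v$ to satisfy $C^n(\omega_i|_{x_v})_v \in \Omega^1_K \subset \Omega^1_{\A_K}$, a countable subset. The $p^n$-analyticity of the evaluation map $x_v \mapsto \omega_i|_{x_v}$ then ensures that each fiber is a closed subset of measure zero in $X(K_v)$. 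Taking the finite intersection over $i$ of the resulting countable unions of products $\prod_v W_v$ preserves the required structure and yields the conclusion.

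The main obstacle is the first step, namely verifying that the descent $X_0/K^{p^n}$ really produces a family of forms in $H^0(X, \Omega^1_X / B_{n,X})$ whose Brauer classes, after $t^{p^n}$-twists with $t \in K$, surject onto $\Br(X)[p]/\Br(K)$. This requires combining the description of $\Br(X_{\overline{K}}) \cong (\overline{K},+)$ from \cite{MR1776939} (which gives a concrete parameterisation of Brauer classes by level-$n$ differential forms) with a base-change analysis of $\Omega^1/B_n$ along $K^{p^n} \hookrightarrow K$, in order to check that the forms descending from $X_0$, after $t^{p^n}$-twisting, recover all the required $\overline{K}$-linear combinations at the level of Brauer classes.
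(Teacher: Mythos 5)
Your overall route is the one the paper takes: combine Theorem \ref{thm Artin level} with the descent hypothesis to produce a section of $H^0(X,\Omega^1_X/B_{n,X})$, then feed it into Theorem \ref{theorem 1 intro} and Proposition \ref{main prop evaluation map 2}. But the step you single out as the main obstacle --- showing that the classes $[t^{p^n}\omega_i]$ span $\Br(X)[p]$ modulo $\Br(K)$ --- is not needed, and you leave it unresolved. Since $X(\A_K)^{\Br(X)[p]} \subset X(\A_K)^{B}$ for \emph{any} subset $B \subset \Br(X)[p]$, it suffices to obstruct with the classes $[t^{p^n}\sigma]$ attached to a \emph{single} suitable $\sigma$; no surjectivity onto the Brauer group is required, because the theorem only asserts a containment in a small set. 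What is actually needed is the existence of one nonzero $\sigma \in H^0(X,\Omega^1_X/B_{n,X})$ arising as the $F_K^n$-pullback of an element of $H^0(Y,\Omega^1_{Y/K}/B_{n,Y/K})$ for the model $Y/K^{p^n}$; this is supplied by Corollary \ref{cor splitting} (and the proposition preceding it) together with the fact from Theorem \ref{thm Artin level} that $H^0(\overline{X},\Omega^1_{\overline{X}}/B_{n,\overline{X}}) \cong \overline{K} \neq 0$ when the Artin invariant equals $n$.

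A second point: you assert that $p^n$-analyticity of $\mathrm{ev}_{\omega_i}$ ensures each fiber is closed of measure zero. Analyticity alone does not give this --- a constant map is analytic and its fibers can be everything. The measure-zero statement requires the second clause of Proposition \ref{main prop evaluation map 2}, namely that the evaluation map is non-constant around every point of $X(K_v)$, and that clause is exactly where the hypothesis that $\sigma$ is an $F_K^n$-pullback from the descent model enters. Once you replace your spanning requirement by the single section provided by Corollary \ref{cor splitting} and invoke the non-constancy statement, the remainder of your argument (countability of $\Omega^1_K$ via Theorem \ref{theorem 1 intro}, level sets of a non-constant $p^n$-analytic map being closed and of measure zero, taking products over $v$) coincides with the paper's proof.
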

Again, if $X_n$ is the Weil-restriction of scalars of $X$ to $K^{p^n}$ then each $W_v \subset X_n(K_v^{p^n}) = X(K_v)$ is a proper analytic subvariety. This stands in contrast to K3 surfaces of finite height or K3 surfaces over number fields: in both cases $\Br(X)/\Br(K)$ is finite so $X(\A_K)^{\Br(X)}$ is open in the adelic topology. Thus, although supersingular K3 surfaces are (at least conjecturally) unirational, their Brauer-Manin obstruction is stronger than finite height K3 surfaces. This partially answers Skorobogatov's question at the beginning of the article. 

In the final part of the article we apply Theorem \ref{theorem 1 intro} in the case $n=0$ to obtain more qualitative results. The first and simplest is the following. Let $\mathcal{L} \subset \Omega^1_{X}$ be a globally generated invertible subsheaf, and let
\[
\psi_{\mathcal{L}} \colon X \longrightarrow \Pp(H^0(X, \mathcal{L})^\vee) \cong \Pp^N_K
\]
be the induced $K$-morphism.

\begin{thm} \label{Theorem Intro LinBun}
Let $(x_v) \in X(\A_K)^{\Br(X)[p]}$ and assume that there exists a component $x_v$ and a differential form $\omega \in H^0(X, \mathcal{L})$ such that $0 \neq \omega|_{x_v} \in \Omega^1_{K_v}$. Then
\[
\psi_{\mathcal{L}}((x_v)) \in \Pp^N(K).
\]
In particular, if $\mathcal{L}$ is very ample, then $(x_v) \in X(K)$.
\end{thm}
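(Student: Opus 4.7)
Fix a $K$-basis $\omega_0, \ldots, \omega_N$ of $H^0(X, \mathcal{L})$; in the $j$-th affine chart of $\Pp^N$ the morphism $\psi_{\mathcal{L}}$ is given by the rational functions $h_{ij} := \omega_i/\omega_j \in K(X)$. The idea is to apply Theorem \ref{theorem 1 intro} at level $n = 0$ to each $\omega_i$, viewed as a global section of $\Omega^1_X$ via the inclusion $\mathcal{L} \hookrightarrow \Omega^1_X$. Since $(x_v)$ is unobstructed against every Brauer class of the form $[t\omega_i]$ with $t \in K$, the theorem produces elements $\eta_i \in \Omega^1_K$ such that $\omega_i|_{x_v} = \eta_i$ in $\Omega^1_{K_v}$ at every place $v$.

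Because $K$ is a global field of characteristic $p$, it is a finite separable extension of $\FF_p(t)$ of transcendence degree one, so $\Omega^1_K$ is a one-dimensional $K$-vector space, generated by $dt$ for a suitable separating element $t$. The hypothesis supplies $\omega \in H^0(X, \mathcal{L})$ with $\omega|_{x_v} \neq 0$ in $\Omega^1_{K_v}$ at some place, and expanding it in the basis yields an index $j$ with $\eta_j \neq 0$ in $\Omega^1_{K_v}$, hence $\eta_j \neq 0$ in $\Omega^1_K$. Since $\Omega^1_K$ is one-dimensional, each $\eta_i = c_i \eta_j$ for a unique $c_i \in K$. I then introduce the $K$-rational sections $\tau_i := \omega_i - c_i \omega_j \in H^0(X, \mathcal{L})$, which by construction pull back to $\tau_i|_{x_v} = \eta_i - c_i \eta_j = 0$ in $\Omega^1_{K_v}$ at every place.

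The crucial step is to argue that the fiber map $\mathcal{L}|_{x_v} \to \Omega^1_{K_v}$ is injective at every place $v$. Being a map from a one-dimensional $K_v$-space it is either injective or zero; in the zero case every $\eta_i$ would vanish in $\Omega^1_{K_v}$, but this is impossible because $\eta_j \neq 0$ in $\Omega^1_K$ remains nonzero in $\Omega^1_{K_v}$ — a separating transcendence generator $t$ of $K/\FF_p$ does not lie in $K_v^p$ at any place, so the natural map $K \cdot dt \hookrightarrow \Omega^1_{K_v}$ is injective. Consequently $\tau_i|_{x_v} = 0$ in the line-bundle fiber $\mathcal{L}|_{x_v}$, i.e.\ $\omega_i|_{x_v} = c_i \omega_j|_{x_v}$ in $\mathcal{L}|_{x_v}$ for every $v$ and every $i$. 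It follows that $\psi_{\mathcal{L}}(x_v) = [c_0 : \cdots : c_N] \in \Pp^N(K)$ at every place, so $\psi_{\mathcal{L}}((x_v)) \in \Pp^N(K)$; when $\mathcal{L}$ is very ample, $\psi_{\mathcal{L}}$ is a closed immersion and the adelic point $(x_v)$ itself descends to an element of $X(K)$.

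The main obstacle I expect is the injectivity of $\Omega^1_K \to \Omega^1_{K_v}$ used in the third paragraph: since the algebraic Kähler module $\Omega^1_{K_v/\FF_p}$ is much larger than $K_v \cdot dt$, this injectivity must be established explicitly, and it ultimately reduces to the fact that a separating transcendence element of a global function field of positive characteristic never falls into $K_v^p$ at any place $v$.
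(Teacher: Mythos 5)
Your proof is correct and follows essentially the same route as the paper's: apply Theorem \ref{theorem 1 intro} at level $n=0$ to a basis of $H^0(X,\mathcal{L})$ to obtain global forms $\eta_i\in\Omega^1_K$, use the one-dimensionality of $\Omega^1_K$ to extract the $K$-rational ratios $c_i$, and identify $\psi_{\mathcal{L}}(x_v)=[c_0:\cdots:c_N]$ at every place (the paper phrases the last step via a local trivialization $\omega_k=g_k\omega$ on an open set where $\omega$ does not vanish, which is the same computation as your fiber-map argument). Your explicit justification that $\Omega^1_K\to\Omega^1_{K_v}$ is injective --- a step the paper uses implicitly when it deduces $\omega_{|x_w}\neq 0$ at every place from $\tilde{\omega}\neq 0$ --- is a welcome addition and is correct as stated.
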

Since the Bogomolov-Sommese vanishing - which implies that the Kodaira dimension of any invertible subsheaf $\mathcal{L} \subset \Omega^1_{X/K}$ is at most one - can fail in positive characteristic (see, for instance, \cite{MR4473638}) the result above can be applied to varieties other than curves even under the assumption that $\mathcal{L}$ is ample. We are currently investigating the case of Raynaud surfaces - for which such ample line bundle exists - in a separate project.

Finally, if one wishes to use all differential forms ``at once'' when applying Theorem \ref{theorem 1 intro} one encounters various subtleties. To solve these, we need to define a subset of adelic points $X^0(\A_K) \subset X(\A_K)$ which is ``well behaved'' with respect to differential forms. We shall then study the Brauer-Manin obstruction on $X^0(\A_K)$ instead of on the whole $X(\A_K)$. Assume from now on that $X$ descends to $K^p \subset K$ (see Section \ref{section abs vs rel}), that is, that there exists a variety $Y/K$ such that $X \cong Y^{(1)}$, where $Y^{(1)} \coloneqq Y \times_{F_K} K$ is the $K$-Frobenius twist of $Y$ (and $F_K$ is the absolute Frobenius of $K$). For example the base-change of $X$ to $K^{1/p}$ clearly satisfies this condition. We fix once and for all such a model $Y$ together with an isomorphism $Y^{(1)} \cong X$. This choice induces two consequences:
\begin{enumerate}[label=(\roman*)]
    \item a decomposition $\Omega^1_X \cong \Omega^1_{X/K} \oplus \Omega^1_K \otimes_K \Oo_X$, see Proposition \ref{prop split diff};
    \item a notion of $p$-power points: for any integral $K$-algebra $R$ we set $X(R)^p \coloneqq F(Y(R))$, where $F \colon Y \to X$ is the relative Frobenius of $Y$.
\end{enumerate}
Next, call a field extension $K \subset L$ \emph{weakly separable} if the natural map $\Omega^1_K \to \Omega^1_L$ is injective (see Section \ref{sec pure points}). Thus every separable extension is weakly separable, and the two notions coincide when $L/K$ is finite. For a weakly separable extension $K \subset L$, we define $X^0(L) \subset X(L)$ as the subset of points $a \in X(L) \setminus X(L)^p$ such that, if $x \in X$ is the image of $a$ with residue field $\kappa(x)$, then $K \subset \kappa(x)$ is separable and $\kappa(x) \subset L$ is weakly separable. For instance, if $L/K$ is finite, then $X^0(L) = X(L) \setminus X(L)^p$. Finally, we define
\[
X^0(\A_K) \coloneqq X(\A_K) \cap \prod_v X^0(K_v).
\]
The structure of $X^0(\A_K) \subset X(\A_K)$ is studied in Section \ref{sec pure points}, and a corollary of Theorem \ref{Theorem density pure} says that $X^0(\A_K)$ is dense in $X(\A_K)$ for the adelic topology. For a more precise statement of the following result, see Theorem \ref{Theorem glob gen} and the subsequent discussion.
\begin{thm} \label{Theorem Intro GlobGen}
Let $X/K$ be a smooth projective variety which descends to $K^p \subset K$. Assume that $\Omega^1_{X/K}$ is globally generated. If $(x_v) \in X^0(\A_K)^{\Br(X)[p]}$, then every local component satisfies
\[
x_v \in X^0(K^s) = X(K^s) \setminus X(K^s)^p,
\]
where $K^s$ is a separable closure of $K$. If moreover $\Omega^1_{X/K}$ is $1$-jet ample, then $(x_v) \in X^0(K)$.
\end{thm}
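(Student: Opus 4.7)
The plan is to exploit Theorem \ref{theorem 1 intro} at $n = 0$, applied systematically to all $\omega \in H^0(X, \Omega^1_X)$. The hypothesis $(x_v) \in X^0(\A_K)^{\Br(X)[p]}$ forces, for every such $\omega$ and every $t \in K$, the adelic vector $(\omega|_{x_v})_v \in \Omega^1_{\A_K}$ to lie in the diagonal image of $\Omega^1_K$. Equivalently, there is a well defined $K$-linear map $\eta \colon H^0(X, \Omega^1_X) \to \Omega^1_K$ such that $\omega|_{x_v} = \eta(\omega)$ in $\Omega^1_{K_v}$ at every place $v$. Using the canonical splitting $\Omega^1_X \cong \Omega^1_{X/K} \oplus \Omega^1_K \otimes_K \Oo_X$ from the $K^p$-descent (Proposition \ref{prop split diff}), the constant summand satisfies the constraint tautologically, so all the substantive information is carried by the restriction $\eta_{\mathrm{rel}} \colon H^0(X, \Omega^1_{X/K}) \to \Omega^1_K$.

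Projecting the identity $\omega|_{x_v} = \eta_{\mathrm{rel}}(\omega)$ along the quotient $\Omega^1_{K_v} \to \Omega^1_{K_v/K}$ annihilates the right hand side, so the image of $\omega|_{x_v}$ in $\Omega^1_{K_v/K}$ vanishes for every $\omega \in H^0(X, \Omega^1_{X/K})$. Global generation of $\Omega^1_{X/K}$ gives a surjection $H^0(X, \Omega^1_{X/K}) \otimes_K K_v \twoheadrightarrow x_v^*\Omega^1_{X/K}$, which then forces the differential $dx_v \colon x_v^*\Omega^1_{X/K} \to \Omega^1_{K_v/K}$ to be zero. Combined with the weak separability built into $X^0(K_v)$ and the hypothesis $x_v \notin X(K_v)^p$, a careful analysis of the kernel of the universal $K$-derivation of $K_v$ along the weakly separable chain $K \subset \kappa(x_v) \subset K_v$ developed in Section \ref{sec pure points} shows that $x_v$ factors through a closed point with separable algebraic residue field over $K$, that is, $x_v \in X^0(K^s)$.

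For the strengthening under $1$-jet ampleness, the identity $\omega|_{x_v} = \omega|_{x_{v'}} = \eta(\omega)$ for any two places $v, v'$, together with the $1$-jet ample separation of distinct points and of first-order neighborhoods, forces all components $x_v$ to factor through a single closed point of $X$ with residue field equal to $K$; concretely, one extracts sufficiently many globally generated invertible subsheaves $\mathcal{L} \subset \Omega^1_{X/K} \hookrightarrow \Omega^1_X$ and applies Theorem \ref{Theorem Intro LinBun} to each, producing a closed embedding of $X$ into a product of projective spaces whose image is forced into $\Pp^N(K)$. The principal technical hurdle is the passage from $dx_v = 0$ to $x_v \in X^0(K^s)$: vanishing of the relative differential alone only places the local coordinates of $x_v$ in $K \cdot K_v^p$, and the refinement to separable algebraicity requires carefully combining the adelic compatibility of $(x_v)$ with the weakly separable framework of $X^0$ in order to exclude the transcendental directions of $K_v/K$.
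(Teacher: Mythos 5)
Your first step --- that survival of the Brauer--Manin obstruction for all classes $[t\omega]$, $t\in K$, $\omega\in H^0(X,\Omega^1_X)$, yields a single $K$-linear map $\eta\colon H^0(X,\Omega^1_X)\to\Omega^1_K$ with $\omega|_{x_v}=\eta(\omega)$ at every place --- is correct and is exactly how the paper begins. But the central reduction that follows is vacuous: you project along $\Omega^1_{K_v}\to\Omega^1_{K_v/K}$, and this module is \emph{zero}. Indeed $K_v\cong\Ff_{q_v}((t))$ with $t\in K$ a uniformizer at $v$, so $\Omega^1_{K_v}$ is the free rank-one $K_v$-module on $dt$, and $dt$ already lies in the image of $\Omega^1_K$; hence $\Omega^1_{K_v/K}=\Omega^1_{K_v}/K_v\cdot d(K)=0$. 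Consequently the statement ``the image of $\omega|_{x_v}$ in $\Omega^1_{K_v/K}$ vanishes'' carries no information, the asserted vanishing of $dx_v\colon x_v^*\Omega^1_{X/K}\to\Omega^1_{K_v/K}$ is automatic for \emph{every} local point, and the subsequent ``careful analysis'' you invoke (and do not carry out) would have to recover the entire content of the theorem from nothing. Even your own diagnosis of the hurdle is off: the kernel of $d_{K_v/K}$ is all of $K_v$, not $K\cdot K_v^p$. The paper's argument avoids this collapse by factoring the evaluation through the residue field of the scheme-theoretic image $z_v\in X$ of $x_v$: the map $S=H^0(X,\Omega^1_{X/K})\to\Omega^1_{K_v}$ factors as $S\to\Omega^1_{\kappa(z_v)}\to\Omega^1_{K_v}$, the second arrow is \emph{injective} precisely because $x_v$ is pure (weak separability of $\kappa(z_v)\subset K_v$), so the condition $\omega|_{x_v}\in\Omega^1_K$ forces the image of $S$ in $\Omega^1_{\kappa(z_v)}$ into the image of $\Omega^1_K$, hence to zero in $\Omega^1_{\kappa(z_v)/K}$; global generation says this image generates $\Omega^1_{\kappa(z_v)/K}$, which therefore vanishes, i.e.\ $\kappa(z_v)/K$ is separable algebraic and $x_v\in X^0(K^s)$. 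The finitely generated field $\kappa(z_v)$, not $K_v$, is where the relative differentials retain content.

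The second part of your proposal is also not in workable form. A $1$-jet ample $\Omega^1_{X/K}$ of rank $\ge 2$ need not contain globally generated invertible \emph{subsheaves} inducing an embedding, and Theorem \ref{Theorem Intro LinBun} is about subsheaves, so ``extracting sufficiently many $\mathcal{L}\subset\Omega^1_{X/K}$'' is not justified. The paper instead defines $\theta(x)=\ker\bigl(\theta_x\colon S_{K^s}\to\Omega^1_{K^s}\bigr)\in\Pp(S^\vee)(K^s)$, notes that Theorem \ref{theorem 1 intro} forces $\theta(x_v)=\theta(x_w)$ for all places, proves that $1$-jet ampleness (separation of degree-$2$ zero-cycles) makes $\theta$ injective, and finishes with a Chebotarev argument to descend the common point from $K^s$ to $K$; this last descent step is absent from your sketch as well.
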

The notion of $k$-jet ampleness for vector bundles was introduced in \cite{MR1211891} (see also Section \ref{sec pure points}). Note that a line bundle is $0$-jet ample if it is globally generated, and $1$-jet ample if it is very ample. In characteristic zero, $\Omega^1_{X/K}$ is globally generated if and only if the Albanese morphism is unramified. For recent results concerning the ampleness of $\Omega^1_{X/K}$, see \cite{krämer2025shafarevichconjecturevarietiesglobally}, where this notion is also studied in relation to the Shafarevich conjecture.
\subsection{Some literature}
The Brauer-Manin obstruction in positive characteristic has been studied in several works. The most general result is given in \cite{MR2630046}, which proves that 
\[
X(\A_K)^{\Br(X)} = X(K)
\] 
whenever $X$ is a subvariety of an abelian variety satisfying certain non-isotriviality and $p$-torsion assumptions (see \cite[Theorem D]{MR2630046}). In most of our results we assume that $X$ descends to $K^p \subset K$, which in particular forces the Kodaira-Spencer map to vanish (see Section \ref{section abs vs rel}). In contrast, \cite{MR2630046}[Theorem D] requires that $A[p^\infty] \cap A(K^s)$ is finite. This hypothesis was first introduced in \cite{zbMATH00811732}, where it is also shown in Section 4 of \textit{loc.cit.} that it holds generically, meaning when $A$ is ordinary and the Kodaira-Spencer map has full rank. So, in a sense, our hypotheses complement each other. There are also related works on the case of curves: see for instance \cite{MR4448590}, \cite{creutz2023brauermaninobstructionnonisotrivialcurves} and \cite{zbMATH07940357}. Finally, let us mention the preprint \cite{beli2018analoguespnthhilbertsymbol}, which studies the symbol  
\[
((-,-))_p \colon K/K^p \times K/K^p \longrightarrow \Br(K)[p]
\] 
for a field $K$ of characteristic $p$. This construction is not widely present in the literature (see the discussion in \cite{beli2018analoguespnthhilbertsymbol}). As we explain in Section \ref{sub examples}, this symbol produces differential Brauer classes of level one. 
\section{Differential Brauer classes}
In this chapter we study the $p$-torsion Brauer group of a regular scheme of positive characteristic. Most of the results presented here follow from classical constructions in positive characteristic. Some of the references that we use are Illusie's \cite{MR0565469}, Milne's \cite{MR559531}, Serre's \cite{MR98097} and Gille's and Szamuely's book \cite{MR3727161}
\subsection{Preliminaries} 
\label{first section}
The inverse Cartier operator was introduced in \cite{MR0084497} and it can be defined over any base scheme of positive characteristic, but for our applications, it is useful to consider only the absolute case, i.e., with $\Spec(\Ff_p)$ as base scheme. Let $A$ be a $\Ff_p$-algebra. We denote by $\Omega^1_A$ the module of K\"{a}hler differentials of $A$ over $\Ff_p$ and by $A^p$ the subalgebra of $A$ generated by $p$-powers. Then, the de Rham complex $(\Omega^\bullet_A, d)$ is $A^p$-linear, and it follows that $\mathcal{H}^i(\Omega^\bullet_A)$ is a quasi-coherent $A^p$-module (coherent if $A^p \subset A$ is finite). The inverse Cartier operator is then the unique map of graded $A$-algebras
\begin{equation}
    C^{-1} \colon \bigoplus_{i \geq 0 }\Omega^i_A \rightarrow \bigoplus_{i \geq 0 } H^i(\Omega^\bullet_A) 
\end{equation}
(where the $A$-algebra structure on the right-hand side is given by $a \mapsto a^p$) which satisfies the following properties:
\begin{enumerate}[label=(\roman*)]
    \item $C^{-1}(a) = a^p$ for $a \in A = \Omega^0_A$;
    \item $C^{-1} (da) = [a^{p-1} da]$ for $a \in A$;
    \item $C^{-1}$ respects the graded algebra structure on both sides. 
\end{enumerate}
Cartier then proved the following:
\begin{thm}
If $A$ has a $p$-basis, then $C^{-1}$ is an isomorphism.
\end{thm}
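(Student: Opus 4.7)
The plan is to reduce to a one-variable computation via a Künneth-style decomposition coming from the $p$-basis. Let $\{x_i\}_{i \in I}$ be a $p$-basis of $A$, so that the monomials $\prod x_i^{e_i}$ with $0 \leq e_i < p$ form an $A^p$-basis of $A$ and $\{dx_i\}$ is an $A$-basis of $\Omega^1_A$. Since $d$ kills $p$-th powers, the de Rham complex is $A^p$-linear and coincides with $\Omega^\bullet_{A/A^p}$. Both the source and the target of $C^{-1}$ commute with filtered colimits in $A$ along finite subsets of the $p$-basis, so I can assume $I = \{1, \ldots, n\}$ is finite.

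Next I would decompose. Setting $B_i = A^p[x_i] \subset A$, which is $A^p$-free of rank $p$ on $1, x_i, \ldots, x_i^{p-1}$, the $p$-basis property gives an isomorphism of $A^p$-algebras
\[
B_1 \otimes_{A^p} \cdots \otimes_{A^p} B_n \xrightarrow{\sim} A.
\]
Since each $B_i$ is $A^p$-flat, this upgrades to an isomorphism of DG $A^p$-algebras $\Omega^\bullet_{B_1/A^p} \otimes_{A^p} \cdots \otimes_{A^p} \Omega^\bullet_{B_n/A^p} \xrightarrow{\sim} \Omega^\bullet_{A/A^p}$. The Künneth theorem (applicable since each factor is $A^p$-flat) identifies $H^\bullet(\Omega^\bullet_A)$ with the tensor product of the $H^\bullet(\Omega^\bullet_{B_i/A^p})$.

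The one-variable case is then a direct calculation. For $B_i = A^p[x_i]$, writing $f = \sum_{k=0}^{p-1} a_k x_i^k$, one finds $H^0(\Omega^\bullet_{B_i/A^p}) = \ker d = A^p$ and $H^1(\Omega^\bullet_{B_i/A^p}) = A^p \cdot [x_i^{p-1} dx_i]$, each a free $A^p$-module of rank one. By the defining properties of $C^{-1}$ recalled before the statement, it sends $1 \mapsto 1$ and $dx_i \mapsto [x_i^{p-1} dx_i]$, and so on each factor it sends a generating system (for the source's natural $A$-module structure) to a generating system (for the Frobenius-twisted target structure), giving an isomorphism.

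Finally I would combine the factors. By multiplicativity of $C^{-1}$ as a map of graded $A$-algebras, the global Cartier map coincides, under the Künneth decomposition, with the tensor product of the one-variable Cartier maps, hence is an isomorphism. The main obstacle is precisely this last compatibility check: one must verify that the Künneth isomorphism intertwines the inverse Cartier on $A$ with the tensor product of the inverse Cartiers on the $B_i$. After unwinding, this reduces to the fact that $C^{-1}$ respects wedge products on classes of closed forms, which is part of its characterizing properties.
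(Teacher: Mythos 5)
The paper offers no proof of this statement: it is quoted as Cartier's classical theorem with a reference to \cite{MR0084497}, so there is nothing internal to compare against. Your argument is the standard proof of the Cartier isomorphism (the one found in Katz and in Illusie's notes): decompose $A$ as $\bigotimes_{A^p} A^p[x_i]$ using the $p$-basis, apply K\"unneth to the resulting tensor decomposition of the ($A^p$-linear) de Rham complex, do the rank-$p$ one-variable computation, and assemble via the multiplicativity of $C^{-1}$. The skeleton is correct and the one-variable calculation ($H^0 = A^p$, $H^1 = A^p\cdot[x_i^{p-1}dx_i]$) is right; the end result is that $\Omega^i_A$ is $A$-free on the $dx_J$, $H^i(\Omega^\bullet_A)$ is $A^p$-free on the $[x_J^{p-1}dx_J]$, and $C^{-1}$ sends $a\,dx_J \mapsto a^p[x_J^{p-1}dx_J]$, a bijection because Frobenius $A \to A^p$ is one.

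Two points of the write-up deserve tightening. First, the opening reduction ``so I can assume $I$ is finite'' is not literally available: the subalgebras $A^p[x_i : i\in S]$ for finite $S$ do not themselves have $\{x_i\}_{i\in S}$ as a $p$-basis (their ring of $p$-th powers is $A^{p^2}[x_i^p]$, not $A^p$), so you cannot simply replace $A$ by such a subring. What is true --- and what your second paragraph actually uses --- is that $\Omega^\bullet_{A/A^p}$ is the filtered colimit over finite $S$ of the complexes $\bigotimes_{i\in S}\Omega^\bullet_{B_i/A^p}$ and that cohomology commutes with this colimit; phrase the reduction there and drop the first one. Second, the claim that $C^{-1}$ is an isomorphism ``on each factor'' is false as stated: the degree-zero map $B_i \to H^0(\Omega^\bullet_{B_i/A^p}) = A^p$, $b \mapsto b^p$, has image $A^{p^2}[x_i^p]$, which is a proper subring of $A^p$ in general. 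The factor-wise maps are only semilinear over the Frobenius of $B_i$, which is not surjective onto $A^p$; the bijectivity only appears globally, where the twist is by the Frobenius of all of $A$. The clean fix is to skip the factor-wise statement entirely and check the assembled map on the global bases $\{dx_J\}$ and $\{[x_J^{p-1}dx_J]\}$ as above, which is exactly what multiplicativity of $C^{-1}$ gives you. With these two repairs the proof is complete and is, as far as one can tell, the argument the paper's citation points to.
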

We recall that a ring $A$ of characteristic $p>0$ has a $p$-basis if there is a subset $\Gamma \subset A$ such that $A^p[\Gamma] = A$ and, for any finite choice of elements $a_1 \cdots a_k \in \Gamma$, the monomials $a_1^{i_1} \cdots a_k^{i_k}$ with $0 \leq i_1, \cdots, i_k \leq p-1$ are linearly independent over $A^p$ (see for instance an equivalent statement in \cite{MR252389} or \cite{MR0567425}). From \cite[\href{https://stacks.math.columbia.edu/tag/07P2}{Tag 07P2}]{stacks-project} any field $K$ of positive characteristic has a $p$-basis, and from \cite{MR0567425}[Theorem 3.4], if $A$ is smooth over a field $K$ of characteristic $p>0$, then $A$ has a $p$-basis.   
In this case, one defines the Cartier operator $C$ using the short exact sequence 
\begin{equation} \label{sequence: Cartier}
  0 \rightarrow  B^1_A \rightarrow Z^1_A \xrightarrow{C} \Omega^1_A \rightarrow 0,
\end{equation}
where $B^1_A \subset Z^1_A \subset \Omega^1_A$ denote, respectively, the exact and the closed forms of $\Omega^1_A$ (note that these are $A^p$-modules). 
\begin{cor} \label{cor: d = 0 iff p power}
If $A$ has a $p$-basis, then $d(a) = 0$ if and only if $a \in A^p$.
\end{cor}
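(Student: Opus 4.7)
The corollary is essentially the degree-$0$ piece of the Cartier isomorphism asserted by the theorem, so my plan is to just unwind that piece.

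The direction $a \in A^p \Rightarrow d(a) = 0$ is immediate: since $\Omega^\bullet_A$ is the de Rham complex over $\mathbb{F}_p$, it is $A^p$-linear, so $d$ kills all $p$-th powers (equivalently, $d(b^p) = p\, b^{p-1}\, db = 0$ in characteristic $p$). So the content of the corollary lies in the converse implication.

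For the converse, I would observe that $\Omega^{-1}_A = 0$, hence by definition
\[
H^0(\Omega^\bullet_A) = \ker\bigl( d \colon A \to \Omega^1_A \bigr).
\]
By property (i) of the characterization of $C^{-1}$, the degree-zero component of the inverse Cartier operator is the map
\[
C^{-1} \colon A \longrightarrow H^0(\Omega^\bullet_A), \qquad a \longmapsto a^p.
\]
Under the assumption that $A$ admits a $p$-basis, Cartier's theorem (just stated) asserts that this is an isomorphism. Consequently $\ker(d)$ coincides with the image of the absolute Frobenius $F \colon A \to A$, $a \mapsto a^p$. The final step is to identify this image with the subalgebra $A^p \subset A$: in characteristic $p$, Frobenius is a ring homomorphism, so the set $\{a^p : a \in A\}$ is already closed under addition and multiplication and therefore coincides with the subalgebra it generates, which is $A^p$ by the definition fixed at the beginning of the section.

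There is no real obstacle in this argument; the statement is literally the surjectivity (and injectivity) of $C^{-1}$ in degree zero, combined with the trivial identification $A^p = \operatorname{im}(F)$. The only mild subtlety is the bookkeeping around the definition of $A^p$ as "the subalgebra generated by $p$-powers'' versus the image of Frobenius, but this coincidence is automatic in characteristic $p$.
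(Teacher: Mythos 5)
Your proof is correct and is exactly the argument the paper intends: the corollary is stated without proof as an immediate consequence of Cartier's theorem, namely that the degree-zero component $C^{-1}\colon A \to H^0(\Omega^\bullet_A)=\ker(d)$, $a\mapsto a^p$, is an isomorphism (surjectivity is all that is needed), so $\ker(d)=A^p$.
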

Let now $\Omega_{\mathrm{log},A}^1 \subset Z^1_A$ be the abelian subgroup generated by the forms $\mathrm{dlog}(a) = da/a$ for $a \in A^\times$. Note that $a^{-1}da = a^{-p}a^{p-1} da$ and hence $$C( \mathrm{dlog}a) = C(a^{-p}a^{p-1} da) = a^{-1} C(a^{p-1} da) = a^{-1} da = \mathrm{dlog}(a).$$ The following result is also due to Cartier \cite{MR0084497}:
\begin{thm}
 If $A$ has a $p$-basis, then $w \in Z^1_A$ is in $\Omega_{\mathrm{log},A}^1$ if and only if $C(w) = w$.
\end{thm}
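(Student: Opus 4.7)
The $(\Leftarrow)$ direction is immediate: the calculation $C(\mathrm{dlog}(a)) = \mathrm{dlog}(a)$ just shown, together with the additivity of $C$, implies every element of $\Omega^1_{\mathrm{log},A}$ is $C$-fixed.

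For the harder $(\Rightarrow)$ direction, my plan is to exploit the explicit structure of $A$ afforded by a $p$-basis. Fix $\Gamma = \{x_i\}_{i \in I}$ a $p$-basis, so that $\Omega^1_A$ is free over $A$ on $\{dx_i\}$ and $A$ is free over $A^p$ on the monomials $x^{\underline{j}} = \prod_i x_i^{j_i}$ with $0 \le j_i \le p-1$. Combining these freeness statements with the defining properties (i)--(iii) of $C^{-1}$, one obtains an explicit formula for the Cartier isomorphism $C^{-1} \colon \Omega^1_A \xrightarrow{\sim} Z^1_A / B^1_A$: for $g = \sum_k g_k\, dx_k$, the class $C^{-1}(g)$ is represented by $\sum_k g_k^p\, x_k^{p-1}\, dx_k$, since $C^{-1}$ is multiplicative and $C^{-1}(dx_k) = [x_k^{p-1} dx_k]$. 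Consequently, writing $w = \sum_k a_k\, dx_k$, the condition $C(w) = w$ is equivalent to the existence of $\beta \in A$ with
\[
w \;=\; \sum_k a_k^p\, x_k^{p-1}\, dx_k \;+\; d\beta,
\]
which, comparing coefficients, becomes the Artin--Schreier-type system $a_k - a_k^p\, x_k^{p-1} = \partial_k \beta$ for each $k$, subject to the integrability condition $\partial_\ell a_k = \partial_k a_\ell$ coming from $dw = 0$.

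The second and decisive step is to construct, from any solution $(a_\bullet, \beta)$ of this system, a finite collection of units $h_\alpha \in A^\times$ with $w = \sum_\alpha n_\alpha\, \mathrm{dlog}(h_\alpha)$. The model case is $w = \mathrm{dlog}(h)$ for a single $h \in A^\times$: then $a_k = \partial_k h / h$, and one verifies the system directly using that $d(h^p) = 0$. For the general case, a natural strategy is to iteratively peel off a principal $\mathrm{dlog}$ contribution matching the leading data and to reduce on the residue. The \textbf{main obstacle} is to show this iteration terminates, as the multi-index interaction between the Artin--Schreier equations and the integrability relations requires delicate bookkeeping. A cleaner alternative, which I would also try, is to first settle the case $A = K$ a field --- where Kato's identification $\mathrm{dlog} \colon K^\times / K^{\times p} \xrightarrow{\sim} \Omega^1_{\mathrm{log}, K}$ provides a concrete handle --- and then transfer to general $A$ with $p$-basis via localization at primes $\mathfrak{p} \subset A$, using that both $\Omega^1_{\mathrm{log}, A}$ and the $C$-fixed subspace of $Z^1_A$ are $\mathbb{F}_p$-subspaces of $\Omega^1_A$ that behave compatibly with localization.
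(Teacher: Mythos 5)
The paper itself offers no proof of this statement: it is quoted as a classical theorem of Cartier with a citation to \cite{MR0084497}, so there is no argument of the author's to compare yours against. Judged on its own terms, your proposal establishes only the easy implication. The computation $C(\mathrm{dlog}(a)) = \mathrm{dlog}(a)$ together with additivity of $C$ does give $(\Leftarrow)$, and your reduction of $C(w)=w$ to the system $a_k - a_k^p x_k^{p-1} = \partial_k\beta$ (via the explicit formula $C^{-1}(\sum_k g_k\,dx_k) = [\sum_k g_k^p x_k^{p-1}\,dx_k]$) is a correct reformulation. But the implication $(\Rightarrow)$ --- producing units $h_\alpha \in A^\times$ with $w = \sum_\alpha n_\alpha\,\mathrm{dlog}(h_\alpha)$ from a solution of that system --- is the entire content of the theorem, and your text explicitly stops at it: ``peel off a principal $\mathrm{dlog}$ contribution'' is not defined (what is the ``leading data'' of $w$, and in what quantity does one ``reduce''?), and no termination argument is given. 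As written this is a statement of the problem in coordinates, not a proof.

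The fallback via the field case plus localization also has a genuine gap. Even granting the theorem for fields, knowing that $w$ becomes logarithmic in each localization $A_{\mathfrak p}$ (or in $\mathrm{Frac}(A)$) only tells you that $w$ lies in the subgroup generated by $\mathrm{dlog}$ of \emph{local} units; descending to a $\mathbb Z$-combination of $\mathrm{dlog}(h)$ with $h \in A^\times$ is a nontrivial gluing problem --- this is precisely why Proposition~\ref{prop: ses log} is formulated as an exact sequence of sheaves on the \'etale site rather than as a statement about sections over $A$. You would need either an explicit argument at the level of $A$ (the standard proofs for fields, e.g.\ the Jacobson--Cartier argument reproduced in \cite{MR3727161}, proceed by induction on the number of $p$-basis elements occurring in $w$ and a degree filtration in the last variable; that induction is the ``delicate bookkeeping'' you defer) or a cohomological vanishing justifying the local-to-global step. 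Either route should be carried out, or the statement should simply be cited to \cite{MR0084497} as the paper does.
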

The next result characterises logarithmic forms. We let $U = \Spec(A)$. 
\begin{prop} \label{prop: ses log}
If $A$ is smooth over a field $K$, there is a short exact sequence on $U_{\et}$ 
$$0 \rightarrow \Omega_{U,\mathrm{log}}^1 \rightarrow Z^1_U \xrightarrow{1-C} \Omega^1_U \rightarrow 0.$$
\end{prop}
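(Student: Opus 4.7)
The plan is to verify exactness of the proposed sequence at each of its three positions. The first two are essentially formal, while the third --- surjectivity of $1-C$ --- is the heart of the matter and requires the étale topology.

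Exactness at $\Omega^1_{U,\mathrm{log}}$ is immediate since the map is the defining inclusion, hence injective. For exactness at $Z^1_U$, I would invoke the Cartier theorem stated just above: on any $\Ff_p$-algebra with a $p$-basis, a closed form $w$ lies in $\Omega^1_{\mathrm{log}}$ iff $C(w) = w$, equivalently $(1-C)(w) = 0$. Since this is a stalkwise criterion on $U_{\et}$ and strict henselisations of smooth $\Ff_p$-algebras retain $p$-bases, the identification $\ker(1-C) = \Omega^1_{U,\mathrm{log}}$ follows.

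The main work is surjectivity of $1-C$ on $U_{\et}$. I would check this at stalks: for $R$ a strict henselisation of $U$ at a geometric point, given $\omega \in \Omega^1_R$, construct $\eta \in Z^1_R$ with $\eta - C(\eta) = \omega$. The strategy combines two ingredients: $C \colon Z^1_R \to \Omega^1_R$ is already surjective (from Cartier's isomorphism $C^{-1} \colon \Omega^1_R \cong Z^1_R/B^1_R$), and every Artin-Schreier equation $t^p - t = a$ with $a \in R$ admits a solution in $R$ by Hensel's lemma. Concretely, I would reduce by additivity to the case $\omega = f\,dg$ with $g$ part of a $p$-basis of $R$, and then construct $\eta$ explicitly by introducing an Artin-Schreier root of a polynomial in $f$ and $g$ together with a Cartier-inverse lift of $\omega$.

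The main obstacle I expect is verifying that the candidate $\eta$ so constructed is indeed closed and satisfies $\eta - C(\eta) = \omega$: the iterative attempt to solve by repeated application of $C^{-1}$ does not terminate in general (it produces an infinite formal series rather than an element of $R$), so one must absorb the tail of the iteration into a single étale Artin-Schreier extension. This bookkeeping is essentially carried out in a more general (de Rham-Witt) setting in Illusie \cite{MR0565469} (Chap.\ 0), and I would ultimately appeal to that reference for the detailed computation, specialising to weight-one forms at level one.
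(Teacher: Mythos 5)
The paper offers no proof of this proposition at all --- it simply points to Illusie \cite{MR0565469}, which is also where your sketch ultimately lands, so there is nothing of substance to compare against; your outline (injectivity by definition, exactness in the middle via Cartier's fixed-point characterisation of logarithmic forms checked on strictly henselian stalks, surjectivity via Artin--Schreier extensions) is the correct standard argument. One remark: the explicit candidate you are worried about in the surjectivity step is simply $\eta = h^p\,\mathrm{dlog}(g)$ with $h^p - h = fg$ (after replacing $g$ by $g-1$ if $g$ is not a unit at the geometric point), for which closedness and $(1-C)\eta = (h^p-h)\,\mathrm{dlog}(g) = f\,dg$ are one-line verifications --- this is exactly the cocycle the paper itself writes down later in the proof of Proposition \ref{prop azu} --- so the bookkeeping you anticipate as the main obstacle is in fact harmless.
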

For a proof, see for instance \cite{MR0565469}[Corollary 2.1.18]. When $A$ has a $p$-basis, one defines the abelian sheaves $B^1_{A,n} \eqcolon B_n$ iteratively as follows: $B_0 = 0$ and $B_{n+1} = \{ \omega \in Z^1_A \colon C(\omega) \in B_{n} \}$. So $B_1 = B_A^1$ by the previous considerations. Similarly, one defines the sheaves $Z_n$. Note that $B_n$ and $Z_n$ are naturally sheaves of $\Oo_X^{p^n}$-modules. One obtains a chain of inclusions 
$$0 \subset B^1_A = B_1 \overset{\overset{C}{\curvearrowleft}}{\subset} B_2 \overset{\overset{C}{\curvearrowleft}}{\subset} B_3 \overset{\overset{C}{\curvearrowleft}}{\subset} \cdots B_\infty \subset Z_\infty \subset \cdots \overset{\overset{C}{\curvearrowright}}{\subset} Z_3 \overset{\overset{C}{\curvearrowright}}{\subset} Z_2 \overset{\overset{C}{\curvearrowright}}{\subset} Z_1 = Z^1_A$$
and one has short exact sequences 
$$0 \rightarrow{B^1_A} \rightarrow B_{n+1} \xrightarrow{C} B_n \rightarrow 0 $$
and 
$$0 \rightarrow{B^1_A} \rightarrow Z_{n+1} \xrightarrow{C} Z_n \rightarrow 0.$$
Note that $C$ acts as a nilpotent operator on $B_\infty$. Hence $B_\infty \cap \Omega^1_{A, \log} = 0$ necessarily. Cartier \cite{MR0084497} and Serre \cite{MR98097} gave a description of the sheaves $B_n$ using Witt vectors (still assuming that $A$ has a $p$-basis). Let $W_n(A)$ be the truncated Witt vector of length $n$ of $A$. One defines differentials $D_n \colon W_n(A) \rightarrow \Omega^1_A$ by the rule 
$$\underline{f} = (f_0,f_1, \cdots, f_{n-1}) \mapsto  d f_{n-1} + f_{n-2}^{p-1} df_{n-2} + \cdots + f_0^{p^{n-1}-1}d f_0.$$
\begin{thm}[Cartier, Serre] 
The map $D_n$ is additive and satisfies the twisted Leibniz rule ${D_n(\underline{f} \cdot \underline{g}) = D_n(\underline{f})g_0^{p^{n-1}} + f_0^{p^{n-1}}D_n(\underline{g}).}$ The image of $D_n$ is $B_n$, and its kernel is $F W_n$.
\end{thm}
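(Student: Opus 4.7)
The plan is to proceed by induction on $n$, built around two structural compatibilities of $D_n$. The first is $D_n(0, g_0, \ldots, g_{n-2}) = D_{n-1}(g_0, \ldots, g_{n-2})$, which is immediate by substitution into the formula, the $df_0 = 0$ summand vanishing. The second is $C \circ D_n = D_{n-1} \circ R$, where $R \colon W_n(A) \to W_{n-1}(A)$ is truncation of the last coordinate; this follows by writing each term of $D_n$ as $f_i^{p^{n-1-i}-1} df_i = (f_i^{p^{n-2-i}-1})^p \cdot f_i^{p-1} df_i$ and combining the identities $C(a^p \omega) = a \cdot C(\omega)$ (for $\omega$ closed) and $C(f^{p-1}df) = df$, so that the exponent on each $f_i$ shifts down by one, while the top term $df_{n-1}$ is killed as an exact form.

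Additivity and the twisted Leibniz rule I would handle uniformly by the standard lift-to-characteristic-zero device. The Witt sum and product polynomials $S_i, P_i \in \Z[\underline{X}, \underline{Y}]$ are characterised by $\Phi_m(\underline{S}) = \Phi_m(\underline{X}) + \Phi_m(\underline{Y})$ and $\Phi_m(\underline{P}) = \Phi_m(\underline{X}) \Phi_m(\underline{Y})$, where $\Phi_m = \sum_{i=0}^{m} p^i X_i^{p^{m-i}}$ is the $m$-th ghost. A direct calculation in $\Omega^1_{\Z[\underline{X},\underline{Y}]}$ gives $d\Phi_m(\underline{X}) = p^m \sum_{i=0}^{m} X_i^{p^{m-i}-1} dX_i$, the right-hand side being exactly $p^m$ times the formal analog of $D_{m+1}$. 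Differentiating the sum-ghost identity, dividing by $p^m$ (legitimate in this torsion-free module) and reducing modulo $p$ yields additivity of $D_{m+1}$; doing the same with the product-ghost identity and using $\Phi_m(\underline{X}) \equiv X_0^{p^m} \pmod{p}$ gives the twisted Leibniz rule. Both identities then specialise to any $\Ff_p$-algebra $A$ under $X_i \mapsto f_i, Y_i \mapsto g_i$.

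With the two compatibilities in hand, the image and kernel statements follow by a clean induction on $n$, the base case $D_1 = d$ being Corollary \ref{cor: d = 0 iff p power}. For $\im(D_n) \subset B_n$: each $f_i^k df_i$ is closed, so $D_n(\underline{f}) \in Z^1$, and $C(D_n(\underline{f})) = D_{n-1}(R\underline{f}) \in B_{n-1}$ by induction, whence $D_n(\underline{f}) \in B_n$. For the reverse inclusion, given $\omega \in B_n$, induction provides $\underline{g} = (g_0, \ldots, g_{n-2}) \in W_{n-1}(A)$ with $C(\omega) = D_{n-1}(\underline{g})$; then $C\bigl(\omega - D_n(g_0, \ldots, g_{n-2}, 0)\bigr) = 0$, so this difference lies in $B^1$ and equals some $dh$, and since $D_n$ depends on the last coordinate only through the summand $df_{n-1}$, we conclude $\omega = D_n(g_0, \ldots, g_{n-2}, h)$. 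For $\ker(D_n) = F W_n$: $D_n$ vanishes on Frobenius images since $d(a^p) = 0$; conversely, $D_n(\underline{f}) = 0$ implies $D_{n-1}(R\underline{f}) = C(D_n(\underline{f})) = 0$, so by induction $f_0, \ldots, f_{n-2} \in A^p$, after which $D_n(\underline{f})$ collapses to $df_{n-1}$, forcing $f_{n-1} \in A^p$ as well.

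The main obstacle, I expect, is the bookkeeping in the lift-to-characteristic-zero step: one must justify dividing by $p^m$ in $\Omega^1_{\Z[\underline{X},\underline{Y}]}$, check that the resulting identity specialises correctly under substitution into an arbitrary $\Ff_p$-algebra, and verify that reduction modulo $p$ interacts cleanly with the universal Witt polynomials $S_i$ and $P_i$. Once this is pinned down carefully, everything else is essentially formal and driven by the two compatibilities.
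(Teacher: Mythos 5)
Your proposal is correct, and it is both more complete than and somewhat different from the paper's own proof. The paper only establishes the inclusion $B_n \subset \im(D_n)$ and refers to Cartier and Serre for additivity, the twisted Leibniz rule, and the kernel; you prove all four statements. Your ghost-component argument for additivity and the Leibniz rule (differentiate $\Phi_m(\underline{S}) = \Phi_m(\underline{X}) + \Phi_m(\underline{Y})$ over $\Z[\underline{X},\underline{Y}]$, cancel $p^m$ in the torsion-free module $\Omega^1_{\Z[\underline{X},\underline{Y}]}$, reduce mod $p$, specialise) is the standard one and is sound; the computations $d\Phi_m = p^m\sum_i X_i^{p^{m-i}-1}dX_i$ and $\Phi_m \equiv X_0^{p^m} \pmod p$ check out. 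For the image, both arguments are inductions driven by the Cartier operator, but they peel the Witt vector from opposite ends: the paper writes $C^{n-1}\omega = df$ and inserts $f$ into the \emph{first} coordinate (the summand $f^{p^{n-1}-1}df$), whereas you use the single-step compatibility $C\circ D_n = D_{n-1}\circ R$ to match $C(\omega)$ and then absorb the exact discrepancy $dh$ into the \emph{last} coordinate. The two are equally short here; your version has the advantage that the identity $C\circ D_n = D_{n-1}\circ R$ does double duty, immediately giving both $\im(D_n)\subset B_n$ (which the paper does not spell out) and the converse inclusion $\ker(D_n)\subset FW_n$ by the same induction, with the base cases resting on $\ker(d)=A^p$ (Corollary \ref{cor: d = 0 iff p power}, which requires the standing $p$-basis hypothesis — worth stating explicitly, since additivity and the Leibniz rule hold for arbitrary $\Ff_p$-algebras while the image and kernel statements do not).
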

\begin{proof}
We only prove that the image is $B_n$, the rest of the statements can be found in the cited references. We prove this by induction on $n$. If $n=1$ this is \eqref{sequence: Cartier}. Now, assume that the statement is true for $n-1$. Let $\omega \in B_n$, which means that $C^n(\omega) = 0$. Now $C(C^{n-1} \omega) = 0$ hence $C^{n-1} \omega = d f$ for some $f \in A$. Note also that $d f = C^{n-1} (f^{p^{n-1}-1} df)$, so $C^{n-1}( \omega - f^{p^{n-1}-1} df) = 0$. In particular, $\omega - f^{p^{n-1}-1} df \in B_{n-1}$ and by induction we can find a Witt vector of length $n-1$ such that $D_{n-1} ((f_0,f_1, \cdots, f_{n-1})) = \omega - f^{p^{n-1}-1} df$. Consider now the Witt vector of length $n$ given by $(f, f_0,f_1, \cdots, f_{n-1})$ and compute 
$$D_n ((f, f_0,f_1, \cdots, f_{n-1})) = D_{n-1}(f_0,f_1, \cdots, f_{n-1})) + f^{p^{n-1}-1}df = \omega.$$
\end{proof}
Note that all these constructions can be sheafified on the Zariski site. 
\subsection{The p-torsion Brauer group} \label{section p torsion}
Let now $X$ be a regular scheme. Then, the Zariski pre-sheaf $ X \supset U \mapsto \Br(U)[p]$ is actually a sheaf by \cite{MR4304038}[Corollary 3.5.6, Theorem 3.5.7]. We denote this sheaf by $\Br_X[p]$. 

\begin{thm} \label{thm p torsion br}
We have a natural exact sequence on $X_{\mathrm{Zar}}$:
$$0 \rightarrow \Omega^1_{\log, X} \rightarrow Z^1_X \xrightarrow{1-C} \Omega^1_X \rightarrow \Br_X[p] \rightarrow 0.$$
\end{thm}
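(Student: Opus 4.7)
The strategy is to verify the claimed exact sequence of Zariski sheaves stalkwise by reducing to a cohomological computation on the étale site of $\Spec R$, where $R = \mathcal{O}_{X,x}$ is a regular local ring. Note first that by the sheaf property from \cite{MR4304038} the stalk $(\Br_X[p])_x$ equals $\Br(R)[p]$. I would then apply the étale short exact sequence from Proposition \ref{prop: ses log} on $\Spec R$ and take the long exact sequence in étale cohomology. Both $Z^1_R$ and $\Omega^1_R$ are quasi-coherent (as $R^p$- and $R$-modules respectively), so their higher étale cohomology vanishes on the affine $\Spec R$: for $\Omega^1_R$ this is classical qcoh descent, and for $Z^1_R$ one uses that the absolute Frobenius $\Spec R \to \Spec R^p$ is a universal homeomorphism, so étale cohomology is preserved and then vanishes on the affine $\Spec R^p$. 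The long exact sequence therefore collapses to
\begin{equation*}
0 \to H^0_{\et}(\Spec R, \Omega^1_{\log}) \to Z^1_R \xrightarrow{1-C} \Omega^1_R \to H^1_{\et}(\Spec R, \Omega^1_{\log}) \to 0.
\end{equation*}

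Next I would identify the last term with $\Br(R)[p]$. For this I would invoke the Bloch–Gabber–Kato isomorphism of étale sheaves $\mathbb{G}_m/p\mathbb{G}_m \xrightarrow{\sim} \Omega^1_{X,\log}$, given locally by $u \mapsto \mathrm{dlog}(u)$, which holds on regular $\mathbb{F}_p$-schemes. Combined with the Kummer-style sequence $0 \to \mathbb{G}_m \xrightarrow{p} \mathbb{G}_m \to \mathbb{G}_m/p \to 0$ on $(\Spec R)_{\et}$ and the vanishing $\Pic(R)=0$ for $R$ regular local, the associated long exact sequence produces $H^1_{\et}(\Spec R, \mathbb{G}_m/p) \cong \Br(R)[p]$, hence $H^1_{\et}(\Spec R, \Omega^1_{\log}) \cong \Br(R)[p]$.

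To descend these stalk computations to the claimed sheaf statement, the only remaining point is to match $H^0_{\et}(\Spec R, \Omega^1_{\log})$ with the Zariski stalk of $\Omega^1_{\log, X}$ as defined by $\mathrm{dlog}$-generators. A section of the étale sheaf $\Omega^1_{\log}$ over $\Spec R$ is an $\omega \in Z^1_R$ fixed by $C$, and by Cartier's theorem (applicable whenever $R$ admits a $p$-basis, e.g., when $X$ is smooth over a perfect field) this is precisely the subgroup generated by $\mathrm{dlog}(R^\times)$. The main obstacle I anticipate is securing the Bloch–Gabber–Kato isomorphism in the full generality of regular schemes rather than smooth ones: over a perfect field of characteristic $p$ it is classical, but for an arbitrary regular $\mathbb{F}_p$-scheme one must appeal to Gabber's refinement of BGK, and possibly to Popescu smoothing to bridge the gap. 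The comparison between the Zariski and étale versions of $\Omega^1_{\log, X}$ is a smaller but related issue, also handled by Cartier's theorem once a $p$-basis is available.
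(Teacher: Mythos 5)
Your argument is correct, and its skeleton coincides with the paper's: both rest on Proposition \ref{prop: ses log}, the vanishing of higher cohomology of the quasi-coherent $\Oo^p$-module $Z^1$ on affines, and an identification of $H^1_{\et}(-,\Omega^1_{\log})$ with the $p$-torsion of the Brauer group. The differences are in packaging. The paper works on affine opens and realises $\Br_X[p]$ as the Zariski sheafification of $U \mapsto H^2_{\mathrm{fppf}}(U,\mu_p)$ (the $\Pic(U)/p$ term of the fppf Kummer sequence dies upon sheafification), then obtains $H^2_{\mathrm{fppf}}(U,\mu_p)\cong H^1_{\et}(U,\Omega^1_{\log})$ from Milne's theorem $R^i\pi_*\mathbb{G}_m=0$ together with $R^1\pi_*\mu_p\cong\Omega^1_{\log}$. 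You instead work at stalks, where $\Pic(\Oo_{X,x})=0$ kills the Picard contribution outright, and you stay on the \'etale site, replacing the fppf detour by the dlog isomorphism $\mathbb{G}_m/p\mathbb{G}_m\cong\Omega^1_{\log}$. Note that this isomorphism is precisely the content of the paper's computation of $R^1\pi_*\mu_p$, so you do not need the full strength of Bloch--Gabber--Kato, only its degree-one case, which follows from Cartier theory once a $p$-basis is available. Your concern about regular versus smooth is legitimate but is not a defect specific to your route: the paper's own proof invokes Proposition \ref{prop: ses log}, which is stated only for algebras smooth over a field, so exactly the same care (existence of $p$-bases, or a limit/N\'eron--Popescu argument) is required in either version to reach general regular schemes.
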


\begin{proof}
Consider the sheaf $\mu_p$ on $X_{\mathrm{fppf}}$. Then, for any regular scheme $X$, we get by Kummer theory and Grothendieck's version of Hilbert's 90 an exact sequence 
$$0 \rightarrow \Pic(X)/ p \Pic(X) \rightarrow H^2(X_{\mathrm{fppf}}, \mu_p) \rightarrow \Br(X)[p] \rightarrow 0.$$
On the small Zariski site of $X$ consider the sheafification of $U \mapsto H^2(U_{\mathrm{fppf}}, \mu_p)$. Since every class of $\Pic(U)/p \Pic(U)$ can be killed by passing to smaller open subsets, this sheaf must correspond to $\Br_X[p]$. Now, Milne's isomorphism $H^{2}(U_{\mathrm{fppf}}, \mu_p) \cong H^1(U_{\et}, \Omega^1_{X, \log})$ allows us to compute $H^2(X_{\mathrm{fppf}}, \mu_p) $ in a different way. Before doing so, we give a quick proof of Milne's isomorphism for the convenience of the reader. By \cite{MR559531}[Theorem 3.9] if $\pi \colon U_{\mathrm{fppf}} \rightarrow U_{\et}$ is the restriction of topologies, we have $R^i \pi_* \mathbb{G}_m = 0$ for $i>0$ and $ \pi_* \mathbb{G}_m = \mathbb{G}_m $. Now, we have a short exact sequence of sheaves on $U_{\mathrm{fppf}}$:
$$0 \rightarrow \mu_p \rightarrow \Oo_U^\times\xrightarrow{(-)^p} \Oo_U^\times \rightarrow 0 $$
from which we deduce the short exact sequence on $U_\et$:
$$0 \rightarrow \Oo_U^\times \xrightarrow{(-)^p} \Oo_U^\times \rightarrow R^1 \pi_* \mu_p \rightarrow 0 $$
and hence the isomorphism $R^1 \pi_* \mu_p \cong \Omega^1_{U, \log}$. Milne's isomorphism then follows from the Leray spectral sequence for $\pi_*$ applied to $\mathbb{G}_m$. 

Now Proposition \ref{prop: ses log} gives the long exact sequence 
$$0 \rightarrow H^0(U, \Omega^1_{\log, U}) \rightarrow H^0(U,Z^1_U) \xrightarrow{1-C} H^0(U,\Omega^1_U) \rightarrow H^1(U_{\et}, \Omega^1_{\mathrm{log}, U}) \rightarrow H^1(U_{\et}, Z^1_U);$$
but $Z^1_U$ is a (quasi-)coherent sheaf of $\Oo_U^p$-modules so $ H^1(U_{\et}, Z^1_U) =  H^1(U_{\mathrm{Zar} }, Z^1_U) = 0$ 
whenever $U$ is affine. This proves the result.  
\end{proof}
The map $\Omega^1_X \rightarrow \Br_X[p]$ can be described explicitly in terms of Azumaya algebras. We do this over a field $K$, following the strategy found in \cite[Section 9.2]{MR3727161}. Let us first recall some constructions. Let $\chi \in \Hom(G_K, \Ff_p) \cong H^1(G_K, \Ff_p)$ be a continuous character and let $g \in K^\times = H^0(G_K, (K^s)^\times)$. Finally, let $\delta \colon H^1(G_K, \Ff_p) \xrightarrow{\sim} H^2(G_K, \Z)$ be the natural isomorphism given by the boundary map. The cup product $\delta(\chi) \cup g$ defines a Brauer class of $K$ denoted by $[\chi, g)$. If $f \in K$ and $\chi$ is the character associated to the Artin-Schreier equation $x^p-x = f$, such Brauer class corresponds to the cyclic algebra $$[f,g) = \langle x,y \colon x^p-x = f ; y^p = g ; xy=y(x+1) \rangle.$$
\begin{prop} \label{prop azu}
Let $K$ be a field and let $f,g \in K^\times$. Then, the map from Theorem \ref{thm p torsion br} sends to $f d(g) \in \Omega^1_K$ to the Azumaya algebra $[fg,g)$.
\end{prop}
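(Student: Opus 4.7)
The plan is to trace $f\, dg \in \Omega^1_K$ through the connecting map of Proposition~\ref{prop: ses log} to get an explicit 1-cocycle in $H^1(G_K, \Omega^1_{\log, K^s})$, and then to identify this via Milne's isomorphism with the fppf class in $H^2(K_\fppf, \mu_p)$ that maps to $[fg, g) \in \Br(K)[p]$. The key is to guess a closed lift $\omega \in Z^1_{K^s}$ with $(1-C)\omega = f\, dg$. Working backwards from the desired cocycle, I would try $\omega \coloneqq y^p \cdot \mathrm{dlog}(g)$, where $y \in K^s$ solves the Artin--Schreier equation $y^p - y = fg$. This form is closed because $d(y^p) = 0$ in characteristic $p$, and the semilinearity of $C$ together with $C(\mathrm{dlog}\, g) = \mathrm{dlog}\, g$ gives $C(\omega) = y \cdot \mathrm{dlog}(g)$; hence $(1-C)\omega = (y^p - y)\,\mathrm{dlog}(g) = f\, dg$, as required.

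Applying $\sigma \in G_K$ gives $\sigma(\omega) - \omega = (\sigma(y)^p - y^p) \cdot \mathrm{dlog}(g)$. Since $\sigma(y) = y + \chi_{fg}(\sigma)$ for the Artin--Schreier character $\chi_{fg}$ with values in $\Ff_p$, and $(y + c)^p = y^p + c$ for $c \in \Ff_p$, this simplifies to $\chi_{fg}(\sigma) \cdot \mathrm{dlog}(g)$. Thus the image of $f\, dg$ in $H^1(G_K, \Omega^1_{\log, K^s})$ is represented by the cocycle $\sigma \mapsto \chi_{fg}(\sigma) \cdot \mathrm{dlog}(g)$.

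It remains to match this cocycle with the Brauer class $[fg, g) = \delta(\chi_{fg}) \cup g$. The Kummer connecting map $K^\times \to H^1(K_\fppf, \mu_p)$ composed with Milne's isomorphism sends $g$ to $\mathrm{dlog}(g) \in \Omega^1_{\log, K}$, by the very construction of the iso from the sequence $0 \to \Oo^\times \xrightarrow{(-)^p} \Oo^\times \to R^1\pi_* \mu_p \to 0$ on $K_\et$. The compatibility of the fppf cup product with this identification, obtained via the multiplicative structure of the Leray spectral sequence for $\pi \colon K_\fppf \to K_\et$, then forces $\chi_{fg} \cup [g] \in H^2(K_\fppf, \mu_p)$ to correspond to exactly the cocycle $\sigma \mapsto \chi_{fg}(\sigma) \cdot \mathrm{dlog}(g)$, matching our computation. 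The main obstacle is precisely this cup-product compatibility, which is a standard but non-trivial diagram chase; for our purposes I would invoke it following the treatment in Section~9.2 of~\cite{MR3727161}.
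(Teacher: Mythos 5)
Your proposal is correct and follows essentially the same route as the paper: the same closed lift $y^p\,\mathrm{dlog}(g)$ with $y^p-y=fg$, the same cocycle computation $\sigma\mapsto\chi_{fg}(\sigma)\,\mathrm{dlog}(g)$, and the same final identification of $\chi_{fg}\cup g$ with $[fg,g)$ via the cup-product/boundary compatibility from Section 9.2 of Gille--Szamuely. The only cosmetic difference is that you phrase the last step through the fppf--étale Leray spectral sequence, while the paper works directly with the Galois-cohomological boundary isomorphism and cites \cite[Proposition 3.4.8]{MR3727161}.
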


\begin{proof}
Let $K^s$ be a separable closure of $K$ and consider the short exact sequence of $G_K$-modules 
$$ 0 \rightarrow \Omega^1_{\log, K^s} \rightarrow Z^1_{K^s} \xrightarrow{1-C} \Omega^1_{K^s} \rightarrow 0;$$
now pick $h \in (K^s)^{\times}$ such that $h^p - h = fg$ and consider the closed form $z = h^p \mathrm{dlog}(g)$; then $(1-C)(z) = f d(g)$ and so $[f d(g)] \in H^1(G_K, \Omega^1_{\log, K^s})$ is represented by the cocycle $\sigma \mapsto (\sigma(h)-h) \mathrm{dlog}(g)$. But then $$[f d(g)] = \chi_h \cup \mathrm{dlog}(g) $$ where $\chi_{h} \colon G_{K^s} \rightarrow \Z / p \Z$ is the character coming from the Artin-Schreier equation $x^p - x = fg$ and $\mathrm{dlog}(g) \in H^0(X, \Omega^1_{\log, K^s})$. But by Hilbert's 90 we have $H^0(X, \Omega^1_{\log, K^s}) = K^\times/ K^{\times p} = \Omega^1_{\log,K}$, so the cocycle above is $[fd(g)] = \chi_h \cup g$ where $g \in K^\times = H^0(G_K, (K^s)^\times)$. Finally, let $\delta' \colon H^1(G_K, \Omega^1_{\log, K^s}) \xrightarrow{\sim} \Br(K)[p]$ be the isomorphism induced by the boundary map. By \cite[Proposition 3.4.8]{MR3727161} we have the identity $\delta(\chi_h) \cup g = \delta'( \chi_h \cup g) = [fg,g),$ which concludes the proof.
\end{proof}

\subsection{Example: local fields}
\label{section: K_q((t))}
Let us verify that ${\Br(\Ff_q((t)))[p] \cong \Z/p\Z}$, which follows from local class field theory. This example will be used to prove Theorem \ref{theorem 1 intro}. We begin by noticing that $\Ff_q((t))$ always admits the $p$-basis $\Gamma = \{ t \}$ so all the results from the previous sections apply. Now, note that $\Omega^1_{\Ff_q((t))} \cong \Ff_q((t)) dt.$ (the same statement is false in characteristic zero without any continuity assumption, as $\Omega^1_{\C((t))/ \C}$ has uncountable dimension over $\C((t))$ otherwise).

\begin{prop}
We have a short exact sequence of abelian groups
$$0 \rightarrow (1-C)\Omega^1_{\Ff_q((t))} \rightarrow \Omega^1_{\Ff_q((t))} \xrightarrow{\Tr_{\Ff_q/ \Ff_p} \circ \Res} \Ff_p \rightarrow 0,$$ where $ \Res$ is the usual residue of a power series. In particular, $\Br(\Ff_q((t)))[p] \cong \Ff_p$.
\end{prop}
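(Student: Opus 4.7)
The plan hinges on $\Omega^1_K = K\,dt$ being one-dimensional over $K$, which forces $\Omega^2_K = 0$ and hence $Z^1_K = \Omega^1_K$. Theorem~\ref{thm p torsion br} then identifies $\Br(K)[p]$ with the cokernel of $1 - C$ on $\Omega^1_K$, and the proposition reduces to showing that $\Tr_{\Ff_q/\Ff_p} \circ \Res$ induces an isomorphism on this cokernel.

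First I would verify that $\Tr\Res$ descends to the cokernel. The explicit formula $C(t^n\,dt) = t^{(n-p+1)/p}\,dt$ when $p \mid n+1$ (and zero otherwise), combined with the $p^{-1}$-semilinearity $C(a^p \omega) = a\,C(\omega)$, gives $\Res(C\omega) = \Res(\omega)^{1/p}$; the Frobenius-invariance of $\Tr_{\Ff_q/\Ff_p}$ then yields $\Tr\Res \circ (1-C) = 0$. Surjectivity onto $\Ff_p$ is immediate: pick $a \in \Ff_q$ with $\Tr(a) = 1$ and evaluate at $a\,t^{-1}\,dt$.

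For the injectivity, decompose $\omega \in \ker(\Tr\Res)$ as $\omega = \omega_- + c_{-1}\,t^{-1}\,dt + \omega_+$, with $\omega_-$ a finite sum of terms in degrees $\leq -2$ and $\omega_+ \in \Oo\,dt$ for $\Oo = \Ff_q[[t]]$. A term $c_n t^n\,dt$ of $\omega_-$ with $n \not\equiv -1 \pmod p$ lies in the image since $C(t^n\,dt) = 0$; one with $n \equiv -1 \pmod p$ satisfies $c_n t^n\,dt \equiv c_n^{1/p} t^{(n-p+1)/p}\,dt \pmod{(1-C)\Omega^1_K}$, and the map $n \mapsto (n-p+1)/p$ strictly decreases $|n|$ while keeping the index $\leq -2$, so the iteration terminates at an index not $\equiv -1 \pmod p$ and hence in the image. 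For the middle term, Artin--Schreier on $\Ff_q$ gives $b \in \Ff_q$ with $b^p - b = c_{-1}$ whenever $\Tr(c_{-1}) = 0$, and then $(1-C)(b^p t^{-1}\,dt) = c_{-1}\,t^{-1}\,dt$.

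The main obstacle is showing $\omega_+ \in (1-C)\Omega^1_K$. Writing $\eta = \sum_{n \geq 0} d_n t^n\,dt$, the equation $(1-C)(\eta) = \omega_+$ becomes $d_n - d_{pn+p-1}^{1/p} = c_n$, which after raising to the $p$-th power is the forward recursion $d_{pn+p-1} = d_n^p - c_n^p$. The naive backward iteration $d_n = \sum_{k \geq 0} c_{p^k(n+1)-1}^{1/p^k}$ need not terminate in $\Ff_q$, so instead I would partition $\ZZ_{\geq 0}$ into the chains $\{p^k(a+1) - 1 : k \geq 0\}$ indexed by $a \in \ZZ_{\geq 0}$ with $p \nmid a+1$, declare $d_a := 0$ at each chain start, and propagate forward via $d_{pn+p-1} = d_n^p - c_n^p$. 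Perfectness of $\Ff_q$ makes each step unambiguous, producing a well-defined $\eta \in \Oo\,dt$; combining the three pieces then gives $\omega \in (1-C)\Omega^1_K$.
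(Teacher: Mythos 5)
Your proof is correct, and it follows the same overall strategy as the paper (an explicit computation of $\coker(1-C)$ on $\Ff_q((t))\,dt$, with the residue term governed by Artin--Schreier theory over $\Ff_q$), but the computation is organized differently. The paper uses the unique decomposition $\omega = d(g) + f(t)^p\,\mathrm{dlog}(t)$, observes that $1-C$ is the identity on exact forms, and reduces solvability of $(1-C)\tilde\omega=\omega$ to a single Artin--Schreier-type equation $y^p-y=tf(t)$ in $\Ff_q((t))$, asserting that ``a simple computation'' shows this is solvable iff $x^p-x=\Res(f)$ is solvable in $\Ff_q$. You instead split $\omega$ by degree into a principal part, the residue term, and a regular part: the principal part is absorbed by iterating $\omega\equiv C\omega$ (strictly decreasing $|n|$ until an exact term is reached), the residue term is exactly the Artin--Schreier step over $\Ff_q$, and the regular part is handled by the forward recursion $d_{p(n+1)-1}=d_n^p-c_n^p$ along the chains $\{p^k(a+1)-1\}$ with $d_a=0$ at chain starts. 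That last step is precisely the content the paper leaves implicit, and it has the added benefit of exhibiting $(1-C)\Omega^1_{\Ff_q[[t]]}=\Omega^1_{\Ff_q[[t]]}$ directly --- a fact the paper invokes later (in the proof of Theorem \ref{theorem 1 intro}) by reference back to this section. The paper's decomposition is slicker and matches the $Z^1/B^1$ formalism used elsewhere; yours is more elementary and self-contained. No gaps.
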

\begin{proof}
    We can write any $\omega \in \Omega^1_{\Ff_q((t))}$ uniquely as $d(g(t)) + f(t)^p \mathrm{dlog}(t)$. Given $\omega$, suppose we want to solve the equation $(1-C)(\tilde{\omega}) = \omega$ with $\tilde{\omega} = d(\tilde{g}(t)) + \tilde{f}(t)^p \mathrm{dlog}(t)$. Then we compute 
$$ (1-C)(\tilde{\omega}) = d(\tilde{g}(t)) + \tilde{f}(t)^p \mathrm{dlog}(t) - \tilde{f}(t)\mathrm{dlog}(t) $$
and so we only need to solve $y^p - y = tf(t)$ in $\Ff_q((t))$. But then a simple computation shows that the equation above can be solved if and only if $x^p - x = \Res(f)$ can be solved in $\Ff_q$. The result then follows from the fact that $x^p - x = a$ can be solved in $\Ff_q$ if and only if $\Tr_{\Ff_q/ \Ff_p}(a) = 0$. We show this for the convenience of the reader; consider the exact sequence of $G \coloneqq \Gal(\bar{\Ff}_{q}/ \Ff_{q})$-modules
$$0 \rightarrow \Ff_p \rightarrow \bar{\Ff}_q \xrightarrow{x^p-x} \bar{\Ff}_q  \rightarrow 0 $$
and deduce that 
$$0 \rightarrow \Ff_p \rightarrow \Ff_{q} \xrightarrow{x^p-x} \Ff_{q}  \rightarrow \Hom(G,\Ff_p) \rightarrow 0$$
by Grothendieck's version of Hilbert's 90. Since $ \Hom(G,\Ff_p) \cong \Ff_p$ we see that the cokernel of $\Ff_q \xrightarrow{x^p-x} \Ff_q$ is isomorphic to $(\Ff_p,+)$. Now, write $q = p^n$; if $a,b \in \Ff_{p^n}$ are such that $b^p - b = a$ then $$\Tr_{\Ff_{p^n}/ \Ff_{p}}(a) = \sum_{0 \leq i < n} a^{p^i} =  \sum_{0 \leq i < n} b^{p^{i+1}} - b^{p^{i}} = 0$$
so $\mathrm{Im}(x^p-x) \subset \ker(\Tr_{\Ff_q/ \Ff_p})$ and we conclude since also $\ker(\Tr_{\Ff_q/ \Ff_p})$ has index $p$ in $\Ff_q$ because the trace is surjective
\end{proof}
\subsection{Differential Brauer group} \label{Section differential Brauer classes}
Let now $X/K$ be a smooth variety. Theorem \ref{thm p torsion br} yields a natural map $H^0(X, \Omega^1_X) \rightarrow \Br(X)[p]$. Now, if $K$ is also perfect, then $\Omega^1_X = \Omega^1_{X/K}$ so if $H^0(X, \Omega^1_X) = 0$, this map is not interesting. On the other hand, note that $C(B_{n,X}) = B_{n-1, X} \subset B_{n,X}$ and so Theorem \ref{thm p torsion br} also yields a natural map $H^0(X, \Omega^1_X/ B_{n,X}) \rightarrow \Br(X)[p]$. 
\begin{defi}
   The differential Brauer group of level $n$ is
      $$ \Br^{\delta}_{n}(X) \coloneqq \im(H^0(X, \Omega^1_X/B_{n,X}) \rightarrow \Br(X)[p]).$$ For any section $\omega \in H^0(X, \Omega^1_X/B_{n,X})$ we denote by $[\omega] \in \Br^{\delta}_n(X)$ the induced element. 
\end{defi}
Note that for $m \leq n$ we have a diagram 
\begin{center}
    \begin{tikzcd}
    H^0(X, \Omega^1_X/B_{m,X}) \arrow[r] \arrow[d] & H^0(X, \Omega^1_X/B_{n,X}) \arrow[d] \\
    \Br(X)[p] \arrow[r, "="] & \Br(X)[p]
\end{tikzcd}
\end{center}
which shows that $\Br^{\delta}_{m}(X) \subset \Br^{\delta}_{n}(X) $. So we have an increasing filtration $$0  \subset \Br^{\delta}_{0} (X) \subset \Br^{\delta}_{1}(X) \subset  \cdots \subset \Br(X)[p]$$ and we put $\Br^{\delta}_{\infty}(X)$ the union of all the $\Br^\delta_n(X)$. Note that by the representability of flat cohomology we have 
$$\Br(X)[p] \cong (p\text{-torsion abelian group}) \oplus U(K)$$
where $U/K$ is a $p$-torsion unipotent algebraic group of finite type. We question whether it is true that $\Br^\delta_\infty(X) = U(K) $ for $K$ algebraically closed (or up to a finite extension if $K$ is any field). We show that this is the case for abelian and K3 surfaces.
\subsubsection{Representability over perfect fields.}
\begin{prop} \label{thm rep}
Let $K$ be a perfect field of characteristic $p>0$, and let $X/K$ be a smooth projective variety. For $n \geq 0$ let $\mathcal{F}_n$ be the fppf sheafification of the functor
$$\tilde{\mathcal{F}}_n \colon (\mathrm{sm.Sch.}/K)^{\mathrm{opp}} \rightarrow (Ab) $$
given by $$T \mapsto H^0(X \times T, \Omega^1_{X \times_K T}/ B_{n, X} \otimes_K \mathcal{O}_T^{p^n}).$$ Then $\mathcal{F}_n$ is representable by $\mathbb{G}_a^{\oplus h^0(X, \Omega^1_X/ B_{n,X})}$.
\end{prop}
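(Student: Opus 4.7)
The plan is to verify the representability in two steps: first compute $\tilde{\mathcal{F}}_n$ explicitly on affine smooth test schemes, then perform the fppf sheafification via a relative Frobenius cover. Write $V := H^0(X, \Omega^1_X/B_{n,X})$; this is a finite-dimensional $K$-vector space because $\Omega^1_X/B_{n,X}$ is coherent when viewed as an $\mathcal{O}_X^{p^n}$-module (with $K = K^{p^n}$, as $K$ is perfect) and $X$ is projective.

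\emph{Step 1 (presheaf computation).} I would show that for any affine smooth $T = \Spec A$ over $K$,
\[
\tilde{\mathcal{F}}_n(T) \;=\; V \otimes_K A^{p^n}.
\]
The key point is to read the sheaf $\Omega^1_{X \times_K T}/(B_{n,X} \otimes_K \mathcal{O}_T^{p^n})$ as the external tensor product $(\Omega^1_X/B_{n,X}) \boxtimes_K \mathcal{O}_T^{p^n}$ of $\mathcal{O}_X^{p^n}$- and $\mathcal{O}_T^{p^n}$-modules on $X \times T$. Its global sections then factor by flat base change / Künneth — applicable because $X$ is projective — yielding $V \otimes_K A^{p^n}$. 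Choosing a $K$-basis of $V$ of cardinality $r := h^0(X, \Omega^1_X/B_{n,X})$ then identifies $\tilde{\mathcal{F}}_n$ with the subpresheaf
\[
T \longmapsto \bigl(\Gamma(T,\mathcal{O}_T)^{p^n}\bigr)^{\oplus r} \;\hookrightarrow\; \Gamma(T,\mathcal{O}_T)^{\oplus r} = \mathbb{G}_a^r(T).
\]

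\emph{Step 2 (sheafification via relative Frobenius).} It remains to check that this subpresheaf inclusion becomes an isomorphism after fppf sheafification, i.e., that every element of $\Gamma(T,\mathcal{O}_T)$ is fppf-locally a $p^n$-th power. For $T = \Spec A$ smooth over the perfect field $K$, set $T^{(1/p^n)} := T \times_{K, F_K^{-n}} K$, which is again a smooth $K$-scheme (well-defined because $F_K$ is an automorphism). The $n$-fold relative Frobenius
\[
\pi \;:=\; F^n_{T^{(1/p^n)}/K} \colon T^{(1/p^n)} \longrightarrow (T^{(1/p^n)})^{(p^n)} = T
\]
is a finite locally free $K$-morphism of rank $p^{n \dim T}$, hence an fppf cover; in coordinates it is the $p^n$-th power map $A \to A^{(1/p^n)} \cong A$, $a \mapsto a^{p^n}$, so $\pi^* a \in A^{p^n}$ for every $a \in A$. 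Hence every section of $\mathbb{G}_a^r(T)$ pulls back to a section of $\tilde{\mathcal{F}}_n(T^{(1/p^n)})$, and together with the evident injectivity this proves $\mathcal{F}_n \cong \mathbb{G}_a^r$.

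\emph{Main obstacle.} The technical heart lies in Step 1: correctly interpreting the quotient sheaf in the definition of $\tilde{\mathcal{F}}_n$ as an external tensor product of $\mathcal{O}^{p^n}$-modules and carefully tracking the $K^{p^n} = K$-structures when invoking Künneth. Once the presheaf is identified with $\bigl(\Gamma(-,\mathcal{O})^{p^n}\bigr)^{\oplus r}$, Step 2 is the familiar observation that the relative Frobenius of a smooth scheme over a perfect field is an fppf cover.
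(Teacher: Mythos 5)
Your Step 2 (killing $p^n$-th powers by pulling back along the $n$-fold relative Frobenius, which is fppf by Kunz) is fine and is exactly how the paper handles the sheafification. The problem is Step 1: the identification of $\Omega^1_{X\times_K T}/(B_{n,X}\otimes_K\mathcal{O}_T^{p^n})$ with the external tensor product $(\Omega^1_X/B_{n,X})\boxtimes_K\mathcal{O}_T^{p^n}$ is false, for two separate reasons. First, since $K$ is perfect, $\Omega^1_{X\times T}\cong pr_X^*\Omega^1_X\oplus pr_T^*\Omega^1_T$, so the quotient contains the extra summand $\mathcal{O}_X\boxtimes\Omega^1_T$; for $T=\Spec A$ affine this contributes $\Omega^1_A$ to the presheaf, which is nonzero whenever $A$ is not perfect. (This summand does die after fppf sheafification precisely because $F^*\colon\Omega^1_T\to\Omega^1_{T^{(-1)}}$ is zero -- this is the paper's opening observation -- but your presheaf formula $\tilde{\mathcal{F}}_n(T)=V\otimes_K A^{p^n}$ ignores it.) Second, even the remaining summand $(\Omega^1_X\boxtimes\mathcal{O}_T)/(B_{n,X}\boxtimes\mathcal{O}_T^{p^n})$ is not $(\Omega^1_X/B_{n,X})\boxtimes\mathcal{O}_T^{p^n}$: the numerator carries full $\mathcal{O}_T$-coefficients while only the denominator carries $\mathcal{O}_T^{p^n}$-coefficients, so the quotient is an extension involving $B_{n,X}\boxtimes(\mathcal{O}_T/\mathcal{O}_T^{p^n})$ and $(\Omega^1_X/B_{n,X})\boxtimes\mathcal{O}_T$.

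The consequence you cannot see from your formula is that $H^0$ of this quotient picks up, via the long exact sequence and K\"unneth, the term $\ker\bigl(H^1(X,B_{n,X})\otimes A^{p^n}\to H^1(X,\Omega^1_X)\otimes A\bigr)$, in addition to the cokernel term $\operatorname{coker}\bigl(H^0(X,B_{n,X})\otimes A^{p^n}\to H^0(X,\Omega^1_X)\otimes A\bigr)$. The paper's proof consists precisely of splitting the presheaf into these pieces and sheafifying each one separately (the cokernel piece has $\mathcal{O}_T$-coefficients and is already a $\mathbb{G}_a$-power; the $H^1$ piece has $\mathcal{O}_T^{p^n}$-coefficients and needs your Step 2). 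The $H^1$-contribution is not a technicality: for a supersingular K3 surface $H^0(X,\Omega^1_X)=0$, so the entire group $H^0(X,\Omega^1_X/B_{n,X})$ comes from $H^1(X,B_{n,X})$, and your Step 1 as written would not detect it. The two sheafifications happen to agree in rank, so your final answer $\mathbb{G}_a^{\oplus r}$ is correct, but the argument needs to be rebuilt around the correct decomposition of the presheaf rather than the claimed external tensor product.
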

\begin{proof}
If $T/K$ is a $K$-smooth scheme and if $F \colon T^{(-1)} \rightarrow T$ is the relative Frobenius (we can find $T^{(-1)}$ uniquely since $K$ is perfect) then $F$ is faithfully flat by Kunz's theorem \cite{MR252389} and in particular an fppf cover. But since the $F^* \colon \Omega^1_T \rightarrow \Omega^1_{T^{(-1)}}$ is the zero map the result is immediate if $n=0$. So we can assume $n \geq 1$. Now by the previous consideration we see that the sheafification $\tilde{\mathcal{F}}_n \rightarrow \mathcal{F}_n$ must factorise throught the functor 
$$\mathcal{G}(T) \coloneqq H^0(X \times T, \Omega^1_X \otimes_T \Oo_T / B_{n,X} \otimes \Oo_T^{p^n})$$
and we compute this group using the K\"{u}nneth formula and get a short exact sequence 
$$ 0 \rightarrow \mathcal{G}_0(T) \rightarrow \mathcal{G}(T)  \rightarrow \mathcal{G}_1(T) \rightarrow 0  $$
where 
$$\mathcal{G}_0(T) = \mathrm{coker}(H^0(X, B_{n,X}) \otimes \Gamma(T, \Oo^{p^n}_T) \rightarrow H^0(X, \Omega^1_X) \otimes \Gamma(T, \Oo_T))  $$
and $\mathcal{G}_1(T)$ is the direct sum of 
$$\mathcal{G}_0'(T) = \mathrm{ker}(H^1(X, B_{n,X}) \otimes \Gamma(T, \Oo^{p^n}_T) \rightarrow H^1(X, \Omega^1_X) \otimes \Gamma(T, \Oo_T)) )$$
and 
$$\mathcal{G}_0{''}(T) = \mathrm{ker}(H^0(X, B_{n,X}) \otimes H^1(T, \Oo^{p^n}_T) \rightarrow H^0(X, \Omega^1_X) \otimes H^1(T, \Oo_T)) ).$$
Now we have a natural short exact sequence 
$$0 \rightarrow  \frac{H^0(X, B_{n,X}) \otimes \Gamma(T, \Oo_T)}{H^0(X, B_{n,X}) \otimes \Gamma(T, \Oo_T^{p^n})} \rightarrow   \mathcal{G}_0(T) \rightarrow \frac{H^0(X, \Omega^1_X)}{H^0(X, B_{n,X})} \otimes \Gamma(T, \Oo_T) \rightarrow 0 $$
but the map $F^n \colon \mathcal{G}_0(T) \rightarrow \mathcal{G}_0(T^{-n})$ is zero on the first part of this sequence and so the sheafification of $\mathcal{G}_0$ is simply $\frac{H^0(X, \Omega^1_X)}{H^0(X, B_{n,X})} \otimes \Gamma(T, \Oo_T)$. Concerning $\mathcal{G}_0{''}(T) $, its sheafification must be zero, since we can replace $T$ by an affine open over. On the other hand, $\mathcal{G}_0{'}(T) $ is already a sheaf given by $$T \mapsto \Gamma(T, \Oo_T^{p^n})^{\oplus \big(h^0(X, \Omega^1_X/ B_{n,X}) - h^0(X, \Omega^1_X) + h^0(X, B_{n,X}) \big) }$$ which proves the result.
\end{proof}
We know that the functor $T \mapsto \Br(X \times T)[p]$ from the categeory of $K$-schemes to abelian groups is representable by a finite-type group scheme \cite{arXiv:2107.11492}. If we restrict this functor to the category of smooth schemes then this is also representable by the reduction of the previous one, which we call $\Br_{X/K}$. Since for every $T/K$ smooth we have natural maps 
$$\tilde{\mathcal{F}}_n \rightarrow H^0(X \times T, \Omega^1_{X \times T}/ B_{n,X \times T}) \rightarrow \Br(X \times T)[p] $$
we also get a map of smooth $K$-group schemes $\mathbb{G}_a^{\oplus h^0(X, \Omega^1_X/ B_{n,X})} \rightarrow \Br_{X/K}$.
\begin{thm}
The kernel of $\mathcal{F}_n \rightarrow \Br_{X/K}$ contains $\mathbb{G}_a^{\oplus h^0(X, \Omega^1_X/ B_{n-1,X})}$ as a finite index subgroup. 
\end{thm}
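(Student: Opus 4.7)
I will construct a natural morphism $\mathcal{F}_{n-1} \to \mathcal{F}_n$ with image in $\ker(\mathcal{F}_n \to \Br_{X/K})$, show that its image is a copy of $\mathbb{G}_a^{\oplus h^0(X,\Omega^1_X/B_{n-1,X})}$, and then verify the inclusion has finite index via a cohomological dimension count. The argument splits into a Zariski-sheaf computation, a functoriality step, and a finiteness check.

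First I would prove the key sheaf-level identification that makes the construction possible: the Cartier operator induces an isomorphism
\[
C : Z^1_X/B_{n,X} \xrightarrow{\sim} \Omega^1_X/B_{n-1,X}.
\]
Surjectivity is \eqref{sequence: Cartier}, and injectivity follows from $\ker(C|_{Z^1_X}) = B_{1,X} \subset B_{n,X}$ combined with the characterization $\alpha \in Z^1,\ C\alpha \in B_{n-1} \Longleftrightarrow \alpha \in B_n$. Writing $\iota$ for the composition $\Omega^1_X/B_{n-1,X} \xrightarrow{C^{-1}} Z^1_X/B_{n,X} \hookrightarrow \Omega^1_X/B_{n,X}$ and $\pi$ for the natural surjection $\Omega^1_X/B_{n-1,X} \twoheadrightarrow \Omega^1_X/B_{n,X}$ (well defined since $B_{n-1,X} \subset B_{n,X}$), a snake-lemma argument applied to Theorem \ref{thm p torsion br} with respect to the $B_{n,X}$-filtration --- using that $1-C$ restricts to an automorphism of $B_{n,X}$ (with inverse $\beta \mapsto \beta + C\beta + \cdots + C^{n-1}\beta$) and that $\Omega^1_{\log,X} \cap B_{n,X} = 0$ --- yields a four-term exact sequence of Zariski sheaves
\[
0 \to \Omega^1_{\log,X} \to \Omega^1_X/B_{n-1,X} \xrightarrow{\ \pi - \iota\ } \Omega^1_X/B_{n,X} \to \Br_X[p] \to 0.
\]

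Second I would upgrade this to a morphism of representable group schemes. By functoriality of the Cartier operator and of the filtration $B_{n,X}$ under pullback to $X \times T$, the construction goes through for smooth test schemes $T$, and after the fppf sheafification described in the proof of Proposition \ref{thm rep} it produces a morphism of smooth group schemes $\pi - \iota : \mathcal{F}_{n-1} \to \mathcal{F}_n$ whose image lies in $\ker(\mathcal{F}_n \to \Br_{X/K})$. The kernel of $\pi - \iota$ is identified by the four-term sequence with $H^0(X, \Omega^1_{\log,X})$; via the $\mathrm{dlog}$ map this embeds into $\Pic(X)[p]$ (using that $K^\times$ is $p$-divisible for $K$ perfect) and is therefore a finite abelian group. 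Hence the image of $\pi - \iota$ is a closed subgroup of $\mathcal{F}_n$ isomorphic to $\mathbb{G}_a^{\oplus h^0(X,\Omega^1_X/B_{n-1,X})}$.

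The remaining step, which I expect to be the main obstacle, is to show this image has finite index in $\ker(\mathcal{F}_n \to \Br_{X/K})$. From the long exact sequence in Zariski cohomology attached to $0 \to \Omega^1_{\log,X} \to \Omega^1_X/B_{n-1,X} \to \ker(\Omega^1_X/B_{n,X} \to \Br_X[p]) \to 0$, the quotient identifies with a subgroup of
\[
\ker\!\bigl(H^1(X,\Omega^1_{\log,X}) \to H^1(X,\Omega^1_X/B_{n-1,X})\bigr),
\]
and the content is to bound this as a finite group scheme. The plan is to exploit Milne's isomorphism $H^1_{\et}(X,\Omega^1_{\log,X}) \cong H^2_{\mathrm{fppf}}(X,\mu_p)$ together with the Kummer extension $0 \to \Pic(X)/p \to H^2_{\mathrm{fppf}}(X,\mu_p) \to \Br(X)[p] \to 0$, both recalled in the proof of Theorem \ref{thm p torsion br}: since $\Pic(X)/p$ is finite, it suffices to show the unipotent part of $\Br(X)[p]$ injects into $H^1(X,\Omega^1_X/B_{n-1,X})$. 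I would establish this by comparing our four-term sequence with its shift one level down in $n$ and tracking the compatibility of the connecting homomorphisms, so that only the finite $\Pic(X)/p$ piece can possibly contribute to the cokernel.
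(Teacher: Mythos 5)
Your construction coincides with the paper's: the map you call $\pi-\iota$ is (up to sign) the paper's $C^{-1}-1\colon \Omega^1_X/B_{n-1,X}\to \Omega^1_X/B_{n,X}$, your four-term sequence is precisely the splice of the paper's two short exact sequences $0\to\Omega^1_{\log,X}\to Z^1_X/B_{n,X}\to\mathcal K_n\to 0$ and $0\to\mathcal K_n\to\Omega^1_X/B_{n,X}\to\Br_X[p]\to 0$, and your treatment of the kernel via $H^0(X,\Omega^1_{\log,X})\hookrightarrow\Pic(X)[p]$ is what the paper does. Everything up to and including the identification of the cokernel with a subgroup of $\ker\bigl(H^1(X,\Omega^1_{\log,X})\to H^1(X,\Omega^1_X/B_{n-1,X})\bigr)$ is correct and matches the paper's argument.

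The gap is in the final finiteness step, and it occurs exactly where you pass from Zariski to \'etale cohomology of $\Omega^1_{\log,X}$. The $H^1$ appearing in your long exact sequence is $H^1_{\mathrm{Zar}}(X,\Omega^1_{\log,X})$: your four-term sequence is one of Zariski sheaves, and it has no \'etale analogue, since \'etale-locally $1-C$ is surjective (Proposition \ref{prop: ses log}) and $\Br_X[p]$ sheafifies to zero. Milne's isomorphism instead computes the \emph{\'etale} group $H^1_{\et}(X,\Omega^1_{\log,X})\cong H^2_{\mathrm{fppf}}(X,\mu_p)$, which surjects onto $\Br(X)[p]$ and is therefore infinite in precisely the situations this theorem is meant to address (e.g.\ supersingular K3 surfaces, where $\Br(X)[p]$ contains a copy of $(K,+)$). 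So the group you set out to bound is not the one in your sequence, and the sufficient condition you reduce to --- injectivity of the unipotent part of $\Br(X)[p]$ into $H^1(X,\Omega^1_X/B_{n-1,X})$ --- is unproved, is not needed, and is not even attached to a specified map. The argument closes immediately if you stay in the Zariski topology: the Zariski-sheaf sequence $0\to\Oo_X^\times\xrightarrow{p}\Oo_X^\times\to\Omega^1_{\log,X}\to 0$ identifies $H^1_{\mathrm{Zar}}(X,\Omega^1_{\log,X})$ with $\NS(X)/p\NS(X)$ (as used in Section \ref{varieties with diff sections}), so the cokernel is a subgroup $N_1\subset\NS(X)/p\NS(X)$ and in particular finite. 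That one line is all the paper adds to what you have already written.
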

\begin{proof}
    By construction the kernel of the surjection $\Omega^1_X/ B_{n,X} \rightarrow \Br_X[p] \rightarrow 0$ is the image of $(1-C) \colon Z^1_X/ B_{n, X} \rightarrow \Omega^1_X/ B_{n,X}$, which we call $\mathcal{K}_n$. So $\mathcal{K}_n$ sits in a sequence of Zariski sheaves
$$0 \rightarrow \Omega^1_{\log,X} \rightarrow  Z^1_X/ B_{n, X} \rightarrow \mathcal{K}_n \rightarrow 0  $$
using that the Cartier operator induces an isomorphism $C \colon Z^1_X/ B_{n, X} \cong \Omega^1_X/ B_{n-1, X}$ we obtain a new short exact sequence 
$$0 \rightarrow \Omega^1_{\log,X} \rightarrow  \Omega^1_X/ B_{n-1, X} \xrightarrow{C^{-1} -1} \mathcal{K}_n \rightarrow 0  $$
and finally the long exact sequence in cohomology reads 
$$0 \rightarrow \mathrm{Pic}(X)[p] \rightarrow H^0(\Omega^1_X/ B_{n-1, X})  \rightarrow H^0(\mathcal{K}_n) \rightarrow N_1 \rightarrow 0$$
with $N_1 \subset  \NS(X)/ p \NS(X)$. But then the natural transformation $\mathcal{F}_{n-1} \xrightarrow{C^{-1} -1} \mathcal{F}_n$ induces a map of group schemes, which has finite kernel and whose image is a finite index subgroup of $\ker(\mathcal{F}_n \rightarrow \Br_{X/K})$, which proves the result. 
\end{proof}
\begin{rmk}
The short exact sequence 
$$ 0 \rightarrow Z^1_X/ B_{n,X} \rightarrow \Omega^1_X/ B_{n,X}\rightarrow B^2_X \rightarrow 0$$
together with the isomorphism $C \colon Z^1_X/ B_{n,X} \xrightarrow{\sim}  \Omega^1_X/ B_{n-1,X}$ shows that $$0 \leq h^0(X, \Omega^1_X/ B_{n,X}) - h^0(X, \Omega^1_X/ B_{n-1,X}) \leq h^0(X, B^2_X).$$
In fact, the kernels of $H^0(X, B^2_X) \rightarrow H^1(X, Z^1_X/ B_{n,X} )$ form an increasing sequence of subgroups hence $h^0(X, \Omega^1_X/ B_{n,X}) - h^0(X, \Omega^1_X/ B_{n-1,X})$ is also an increasing sequence of bounded integers. If $N \leq H^0(X, B^2_X)$ is the limit of such sequence then we have $\Br^{\delta}_{\infty}(X) \cong (K,+)^N$. 
\end{rmk}
\begin{cor} \label{cor surj br}
Assume that $K$ is algebraically closed and that $\dim(\Br_{X/K}[p]) \leq h^0(X, \Omega^1_X/ B_{n,X}) - h^0(X, \Omega^1_X/ B_{n-1,X})$. Then $\Br(X)[p] = \Br^\delta_n(X)$. 
\end{cor}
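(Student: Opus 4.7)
The plan is a dimension count for smooth commutative $K$-group schemes. First I would use Proposition \ref{thm rep}, which represents $\mathcal{F}_n$ by $\Ga^{h^0(X,\Omega^1_X/B_{n,X})}$; in particular $\mathcal{F}_n$ is smooth, connected, and $p$-torsion in characteristic $p$. The preceding theorem identifies the kernel of $\mathcal{F}_n \to \Br_{X/K}$ up to finite index with $\Ga^{h^0(X,\Omega^1_X/B_{n-1,X})}$, so the kernel has dimension $h^0(X,\Omega^1_X/B_{n-1,X})$.

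From this I would deduce that the scheme-theoretic image $I \coloneqq \mathrm{Im}(\mathcal{F}_n \to \Br_{X/K})$ is a smooth connected closed subgroup scheme of $\Br_{X/K}[p]$ of dimension exactly $d_1 \coloneqq h^0(X,\Omega^1_X/B_{n,X}) - h^0(X,\Omega^1_X/B_{n-1,X})$, and that on $K$-points it realises $\Br^\delta_n(X)$. Next, the hypothesis $\dim \Br_{X/K}[p] \leq d_1$ forces $I$ to coincide with the identity component $\Br_{X/K}[p]_{\mathrm{red}}^0$: $I$ is a smooth connected closed subgroup of at least the ambient dimension, and all irreducible components of $\Br_{X/K}[p]$ share this same dimension. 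Passing to $K$-points --- where taking reductions is harmless since $K$ is algebraically closed --- would then give $\Br^\delta_n(X) = \Br_{X/K}[p]^0(K)$.

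The main obstacle is the final identification $\Br_{X/K}[p]^0(K) = \Br_{X/K}[p](K) = \Br(X)[p]$. The image of a connected group scheme can only reach the identity component, so closing the gap with the entire $\Br(X)[p]$ forces one to rule out a nontrivial \'etale component group of $\Br_{X/K}[p]$; equivalently, the finite $p$-torsion abelian summand in the decomposition $\Br(X)[p] \cong (p\text{-torsion abelian}) \oplus U(K)$ recalled above must vanish. I would expect this either to follow from the hypothesis in the regime considered, or to be verified directly in the intended applications (supersingular K3 and abelian surfaces), where $\Br_{X/K}[p]$ is known to be unipotent so that the component-group issue does not arise.
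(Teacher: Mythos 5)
Your argument is essentially the paper's: the entire proof given there reads ``By dimension reason, the map $\mathcal{F}_n \rightarrow \Br_{X/K}[p]$ is surjective --- hence it is surjective on $K$-points since $K$ is algebraically closed.'' So the dimension count you set up (image of $\mathcal{F}_n$ is a smooth connected subgroup of dimension $h^0(X,\Omega^1_X/B_{n,X}) - h^0(X,\Omega^1_X/B_{n-1,X})$, which the hypothesis forces to be all of the ambient dimension) is exactly the intended proof. The ``main obstacle'' you flag --- that a connected source can only hit the identity component, so a nontrivial \'etale component group of $\Br_{X/K}[p]$, i.e.\ a nonzero finite summand in $\Br(X)[p] \cong (p\text{-torsion abelian group}) \oplus U(K)$, would not be reached --- is a genuine subtlety that the paper's one-line proof does not address at all; your version is, if anything, more careful than the original. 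In the applications the paper actually makes of this corollary (supersingular abelian surfaces and supersingular K3 surfaces), $\Br_{X/K} \cong \Ga$ is connected unipotent, so the issue is vacuous there; but as a standalone statement the corollary should either carry the implicit convention that $\dim$ refers to the identity component and the finite part vanishes, or be read with your caveat. In short: same approach, and your reservation is a fair criticism of the paper's proof rather than a defect of yours.
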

\begin{proof}
By dimension reason, the map $\mathcal{F}_n \rightarrow \Br_{X/K}[p]$ is surjective - hence it is surjective on $K$-points since $K$ is algebraically closed. 
\end{proof}
In the next sections we shall make some examples: 
\subsection{Examples} \label{varieties with diff sections}
We begin with some general considerations
\begin{prop}
Assume that all forms in $H^0(X, \Omega^1_{X/K})$ are closed (e.g., that Hodge-to-de Rham degenerates). Then $\Br^\delta_{0}(X) \subset \Br(X)_{\mathrm{alg}}$ consists of algebraic Brauer classes. 
\end{prop}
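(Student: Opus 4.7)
The plan is as follows. Take $\omega \in H^0(X, \Omega^1_X)$ with image $\bar\omega \in H^0(X, \Omega^1_{X/K})$; by hypothesis $\bar\omega$ is closed. To show $[\omega] \in \Br(X)_{\mathrm{alg}}$, I would verify that $[\omega]$ vanishes in $\Br(X_{\overline{K}})[p]$.

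First I would base change everything to $\overline{K}$. Since $\overline{K}$ is perfect, $\Omega^1_{\overline{K}/\Ff_p} = 0$, so the canonical surjection $\Omega^1_{X_{\overline{K}}} \twoheadrightarrow \Omega^1_{X_{\overline{K}}/\overline{K}}$ is an isomorphism and the image of $\omega$ in $H^0(X_{\overline{K}}, \Omega^1_{X_{\overline{K}}})$ coincides with the base change $\bar\omega_{\overline{K}}$ of $\bar\omega$. Because $d$ commutes with base change and $X/K$ is smooth projective (so flat base change gives $H^0(X, \Omega^1_{X/K}) \otimes_K \overline{K} \cong H^0(X_{\overline{K}}, \Omega^1_{X_{\overline{K}}/\overline{K}})$), every element of
\[ V \coloneqq H^0(X_{\overline{K}}, \Omega^1_{X_{\overline{K}}}) \]
is closed, so in fact $V = H^0(X_{\overline{K}}, Z^1_{X_{\overline{K}}})$, a finite-dimensional $\overline{K}$-vector space. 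By Theorem \ref{thm p torsion br} applied to $X_{\overline{K}}$, the class $[\bar\omega_{\overline{K}}]$ vanishes in $\Br(X_{\overline{K}})[p]$ precisely when $\bar\omega_{\overline{K}}$ lies in the image of $1-C \colon V \to V$.

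The key step is therefore the surjectivity of $1-C$ on $V$. The Cartier operator restricts to a $\sigma^{-1}$-semilinear endomorphism of $V$, where $\sigma$ is the Frobenius of $\overline{K}$; that is, $C(a\cdot v) = a^{1/p} C(v)$, as follows from $C(a^p \omega) = a C(\omega)$. Picking a basis $e_1, \dots, e_n$ of $V$ and writing $C(e_j) = \sum_i a_{ij} e_i$, the equation $(1-C)(\sum x_i e_i) = \sum w_k e_k$ becomes, after the substitution $y_i = x_i^{1/p}$, the polynomial system
\[ y_k^p - \sum_i a_{ki}\, y_i = w_k, \qquad k=1,\dots,n. \]
This defines a finite dominant morphism $\mathbb{A}^n_{\overline{K}} \to \mathbb{A}^n_{\overline{K}}$ (each $y_i$ is integral of degree $p$ over the coordinate ring of the target, and the map is injective on coordinate rings by a leading-term argument), which is therefore surjective on $\overline{K}$-points. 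Applied to $w = \bar\omega_{\overline{K}}$, this produces the desired preimage $z \in V$, and hence $[\omega]_{\overline{K}}=0$.

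The main obstacle is precisely this Lang-type surjectivity of $1-C$ on a finite-dimensional $\overline{K}$-vector space carrying a Frobenius-semilinear endomorphism; it can equivalently be phrased as the surjectivity of an isogeny of the unipotent group $\mathbb{G}_a^n$ whose differential is nilpotent, or extracted from the Dieudonn\'e-type decomposition of $(V,C)$ into a bijective and a nilpotent part. Everything else is a matter of tracking the base change $K \rightsquigarrow \overline{K}$ in the exact sequence of Theorem \ref{thm p torsion br} and noting that classes in the image of $\pi^*\Omega^1_K \to \Omega^1_X$ die under $\Br(X) \to \Br(X_{\overline{K}})$ for free.
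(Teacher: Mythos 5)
Your proof is correct and follows essentially the same route as the paper: base change to $\overline{K}$, where absolute and relative differentials agree and all global $1$-forms are closed, then establish surjectivity of $1-C$ on the finite-dimensional space $H^0(X_{\overline{K}},\Omega^1_{X_{\overline{K}}})$ — you supply the Lang-type finite-dominant-morphism argument that the paper merely asserts ("difference of a linear isomorphism and a semilinear map"). One small imprecision: vanishing of $[\bar\omega_{\overline{K}}]$ in $\Br(X_{\overline{K}})[p]$ is \emph{implied by}, but not equivalent to, membership in the image of $1-C$ on global sections (the kernel of $H^0(\Omega^1)\to\Br(X)[p]$ can also receive contributions from $\NS(X)/p\NS(X)$, as the paper's next proposition shows), but only the implication you actually use is needed.
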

\begin{proof}
In fact, over $\overline{K}$ the map $$(1-C) \colon H^0(X, Z^1_X) = H^0(X, \Omega^1_X) \rightarrow H^0(X, \Omega^1_X) $$
must be surjective since it is the difference of a linear isomorphism and a $p$-linear map between finitely dimensional $\overline{K}$-vector spaces. 
\end{proof}
This applies e.g. to curves and abelian varieties. On the other hand, we can also obtain transcendental Brauer classes using the same construction:
\begin{prop}
Let $K$ be algebraically closed and let $X/K$ be smooth projective satisfying $H^0(X, B^1_X) = H^0(X, Z^1_X) \subsetneq H^0(X, \Omega^1_X) $. Then, the kernel of the map $$H^0(X, \Omega^1_X) / H^0(X, B^1_X) \rightarrow \Br(X)[p]$$
is isomorphic to the kernel of the cycle class map $c_1 \colon \NS(X)/ p \NS(X) \rightarrow H^1(X, Z^1_X)$.
\end{prop}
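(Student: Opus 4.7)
The plan is to apply the cohomology long exact sequence of the étale short exact sequence from Proposition \ref{prop: ses log} rather than the Zariski four-term sequence of Theorem \ref{thm p torsion br} directly. From
\[
0 \to \Omega^1_{\log,X} \to Z^1_X \xrightarrow{1-C} \Omega^1_X \to 0
\]
on $X_\et$, taking étale cohomology produces a connecting map $\delta \colon H^0(X,\Omega^1_X) \to H^1_\et(X,\Omega^1_{\log,X})$ whose kernel is the image of $H^0(X,Z^1_X) \xrightarrow{1-C} H^0(X,\Omega^1_X)$. Under the hypothesis $H^0(X,Z^1_X)=H^0(X,B^1_X)$ and because $C$ vanishes on $B^1_X$, this image equals $H^0(X,B^1_X)$. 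Hence $\delta$ descends to an injection $\bar\delta \colon H^0(X,\Omega^1_X)/H^0(X,B^1_X) \hookrightarrow H^1_\et(X,\Omega^1_{\log,X})$ whose image equals $\ker\bigl(H^1_\et(X,\Omega^1_{\log,X}) \to H^1(X,Z^1_X)\bigr)$, using that étale and Zariski cohomologies coincide on the quasi-coherent sheaf $Z^1_X$.

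Next I would combine this with Milne's isomorphism $H^1_\et(X,\Omega^1_{\log,X}) \cong H^2_\fppf(X,\mu_p)$ together with the Kummer sequence, which, as recalled in the proof of Theorem \ref{thm p torsion br}, yields
\[
0 \to \Pic(X)/p\Pic(X) \to H^1_\et(X,\Omega^1_{\log,X}) \to \Br(X)[p] \to 0,
\]
with the first map given by the first Chern class. Since $K$ is algebraically closed, $\Pic^0(X)$ is an abelian variety and hence $p$-divisible, so $\Pic(X)/p\Pic(X) = \NS(X)/p\NS(X)$. The map $c_1 \colon \NS(X)/p\NS(X) \to H^1(X,Z^1_X)$ of the statement is then this inclusion followed by the map induced by $\Omega^1_{\log,X} \hookrightarrow Z^1_X$. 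A compatibility to record is that the natural map $H^0(X,\Omega^1_X) \to \Br(X)[p]$ from Theorem \ref{thm p torsion br} factors as $\bar\delta$ followed by the Kummer quotient; this is essentially already in the proof of that theorem, since the four-term Zariski sequence there is built as the Zariski sheafification of the same étale computation.

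Granting these compatibilities, the kernel of $H^0(X,\Omega^1_X)/H^0(X,B^1_X) \to \Br(X)[p]$ equals $\bar\delta^{-1}\bigl(\NS(X)/p\NS(X)\bigr)$, and injectivity of $\bar\delta$ identifies this with
\[
\im(\bar\delta) \cap \NS(X)/p\NS(X) \;=\; \ker\bigl(H^1_\et(X,\Omega^1_{\log,X}) \to H^1(X,Z^1_X)\bigr) \cap \NS(X)/p\NS(X),
\]
which is exactly $\ker(c_1)$ by definition. The step I expect to require the most care is pinning down the compatibility between the map of Theorem \ref{thm p torsion br} and the Kummer-Milne construction (and checking that the Chern class viewed through $\Omega^1_{\log,X} \hookrightarrow Z^1_X$ gives the $c_1$ of the statement); once these identifications are in hand, the proof reduces to the diagram chase above.
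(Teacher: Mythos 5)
Your argument is correct and arrives at the statement by the same underlying mechanism as the paper — a long exact sequence attached to the logarithmic sequence, with the hypothesis $H^0(X,Z^1_X)=H^0(X,B^1_X)$ forcing the image of $1-C$ on global sections to be exactly $H^0(X,B^1_X)$ (since $C$ kills $B^1_X$) — but the packaging is genuinely different. The paper stays in the Zariski topology: it identifies the kernel of $H^0(X,\Omega^1_X)\to\Br(X)[p]$ with $H^0(X,Z^1_X/\Omega^1_{\log,X})$, runs the Zariski long exact sequence of $0\to\Omega^1_{\log,X}\to Z^1_X\to Z^1_X/\Omega^1_{\log,X}\to 0$, and invokes the identification $H^1_{\mathrm{Zar}}(X,\Omega^1_{\log,X})=\NS(X)/p\NS(X)$, also observing along the way that $H^0(X,\Omega^1_{\log,X})=0$ because a logarithmic form cannot be exact. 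You instead take the \'etale long exact sequence of $0\to\Omega^1_{\log,X}\to Z^1_X\xrightarrow{1-C}\Omega^1_X\to 0$, land in the full $H^1_{\et}(X,\Omega^1_{\log,X})\cong H^2_{\fppf}(X,\mu_p)$, and intersect $\im(\bar\delta)=\ker\bigl(H^1_{\et}(X,\Omega^1_{\log,X})\to H^1(X,Z^1_X)\bigr)$ with the Kummer copy of $\Pic(X)/p\Pic(X)=\NS(X)/p\NS(X)$. Your route buys you the fact that everything you need (Milne's isomorphism, the Kummer sequence, vanishing of higher cohomology of the quasi-coherent $\Oo_X^p$-module $Z^1_X$ on affines) is already established in the proof of Theorem \ref{thm p torsion br}, and it sidesteps the separate computation of the Zariski $H^1$ of $\Omega^1_{\log,X}$; the two compatibilities you flag are indeed the only things to pin down, and both are harmless: the global map of Theorem \ref{thm p torsion br} is by construction the sheafification of the presheaf connecting map $H^0(U,\Omega^1_U)\to H^2_{\fppf}(U,\mu_p)$, so it automatically factors as $\delta$ followed by the Kummer quotient, and the composite $\NS(X)/p\NS(X)\hookrightarrow H^1_{\et}(X,\Omega^1_{\log,X})\to H^1(X,Z^1_X)$ is precisely the map the paper calls $c_1$ in the statement. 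So the proof is complete as written.
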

\begin{proof}
Let $ \omega \in H^0(X, \Omega^1_X)$ be a global section and assume that $\omega$ is mapped to zero in $\Br(X)[p]$. The kernel of $H^0(X, \Omega^1_X) \rightarrow \Br(X)[p]$ is then $H^0(X, Z^1_X/ \Omega^1_{\log,X})$ and this last group fits into the exact sequence 
$$H^0(X, \Omega^1_{\log,X}) \rightarrow H^0(X, Z^1_X) \rightarrow H^0(X, Z^1_X / \Omega^1_{\log,X}) \rightarrow H^1(X, \Omega^1_{\log,X}) \rightarrow H^1(X, Z^1_X)  $$
where cohomology is taken with respect to the Zariski topology. 

But $H^1(X, \Omega^1_{\log,X}) = \NS(X) / p \NS(X)$ and the map $H^1(X, \Omega^1_{\log,X}) \rightarrow H^1(X, Z^1_X) $ factorises the cycle class map $c_1 \colon \NS(X)/ p \NS(X) \rightarrow H^2(X/K)$. This is not necessarily injective (although it is when Hodge to de Rham degenerates). Let $H \subset \NS(X)/ p$ be its kernel, which is a finite $p$-group. So, under the assumption, we get the sequence 
$$0 \rightarrow
H^0(X, B^1_X) / H^0(X, \Omega^1_{\log,X}) \rightarrow H^0(X, Z^1_X/ \Omega^1_{\log,X}) \rightarrow H \rightarrow 0;$$
now, note that $H^0(X, \Omega^1_{\log,X}) \subset H^0(X, Z^1_X) = H^0(X, B^1_X) $ and so $H^0(X, \Omega^1_{\log,X}) = 0 $ necessarily, because logarithmic forms cannot be exact. Hence the sequence above becomes 
$$0 \rightarrow
H^0(X, B^1_X) \rightarrow H^0(X, Z^1_X/ \Omega^1_{\log,X}) \rightarrow H \rightarrow 0$$
which proves the result.
\end{proof}
\subsubsection{Curves} Let $C$ be a (smooth projective) curve over a perfect field $K$. We want to show that the map $H^0(C, \Omega^1_C) \rightarrow \Br(C)[p]$ is surjective, i.e., that $\Br_0^{\delta}(C) = \Br(C)[p]$. For simplicity, we assume that $C$ has a zero cycle of degree one. We denote by $K^s$ a separable closure of $K$, and therefore $\bar{K} \cong K^s$ since $K$ is perfect. We denote by $C^s$ the base change of $C$ to $K^s$ and by $G_K$ the Galois group of $K^s/K$. Then under our assumption on $C$ and Tsen's theorem we have $$\Br(C) / \Br(K) =  H^1(G_K, \Pic(C^s)) = H^1(G_K, \Pic^0(C^s)).$$
Now we consider the short exact sequence 
$$0 \rightarrow \Pic(C^s)[p] \rightarrow \Pic^0(C^s) \xrightarrow{[p]} \Pic^0(C^s) \rightarrow 0$$ 
which yields the exact sequence $$0 \rightarrow \Pic^0(C)/ p \Pic^0(C) \rightarrow H^1(G_K, \Pic^0(C^s)[p]) \rightarrow \Br(C)[p] \rightarrow 0.$$
This describes the Brauer group of $C$; note in particular that $\Br(C)[p] = 0$ whenever $\Pic^0(C)$ is supersingular. 

We now look at $\Br_0^\delta(C)$. Note that $\Omega^1_C = \Omega^1_{C/K} = Z^1_{C/K} = Z^1_{C}$ where the middle identification comes from $\dim(C) = 1$ and the others from the fact that $K$ is perfect. Also, note that the Cartier operator identifies $H^0(C, \Omega^1_C/ B_{n,C}) \cong H^0(C, \Omega^1_C)$ for every $n \geq 0$, so it is enough to determine $H^0(C, \Omega^1_C)$ (but we shall not analyse the induced action of the Cartier operator on the Brauer group). The short exact sequence on $C_{\et}$: 
$$0 \rightarrow \Omega^1_{\log, C} \rightarrow \Omega^1_C \xrightarrow{1-C} \Omega^1_C \rightarrow 0$$
yields the sequence 
$$0 \rightarrow \frac{H^0(C, \Omega^1_C)}{(1-C)H^0(C, \Omega^1_C)} \rightarrow H^2_{\fppf}(C, \mu_p) \rightarrow H^1(C, \Omega^1_C) \xrightarrow{1-C} H^1(C, \Omega^1_C).$$
Now, $H^1(C, \Omega^1_C) \cong K$ is generated by the cycle class of a point $[pt] \in H^1(C, \Omega^1_C)$ and $C ( [pt] ) = [pt]$. From this it follows that $\ker(1-C) = \Ff_p \cdot [pt]$. Hence, we can write the sequence as 
$$0 \rightarrow \frac{H^0(C, \Omega^1_C)}{(1-C)H^0(C, \Omega^1_C)} \rightarrow H^2_{\fppf}(C, \mu_p) \rightarrow \Ff_p \cdot [pt] \rightarrow 0.$$
Base-changing to $K^s$ yields the commutative diagram 
\[\begin{tikzcd}
	{H^2_{\fppf}(C, \mu_p) } & { \Ff_p \cdot [pt]} \\
	{H^2_{\fppf}(C^s, \mu_p) } & { \Ff_p \cdot [pt]}
	\arrow[from=1-1, to=1-2]
	\arrow[from=1-1, to=2-1]
	\arrow["{=}", from=1-2, to=2-2]
	\arrow[from=2-1, to=2-2]
\end{tikzcd}\]
and via Kummer theory we get $H^2_{\fppf}(C^s, \mu_p)  = \Ff_p \cdot [pt]$ so that the bottom map is an isomorphism. This then shows that the kernel of $H^2_{\fppf}(C, \mu_p) \rightarrow \Ff_p \cdot [pt]$ consists precisely of all the cohomological classes that die over $K^s$. So if $(\Pic(C)/ p \Pic(C))^0 \subset \Pic(C)/ p \Pic(C)$ is the kernel of the degree map $\Pic(C)/ p \Pic(C) \rightarrow \Ff_p$ we have a short exact sequence 
$$0 \rightarrow (\Pic(C)/ p \Pic(C))^0 \rightarrow  \frac{H^0(C, \Omega^1_C)}{(1-C)H^0(C, \Omega^1_C)} \rightarrow \Br(C)[p] \rightarrow 0$$
which shows that all $p$-torsion Brauer classes of $C$ can be obtained via differential forms. 
\subsubsection{Abelian surfaces}
Let $A/K$ be an abelian surface with $K$ algebraically closed. We know that $H^0(A, \Omega^1_A) \rightarrow \Br(A)$ is zero. One can get differential Brauer classes only if $A$ is supersingular, in which case $\Br_{A/K} \cong \mathbb{G}_a$. We use the results in \cite{MR1827020} to compute the various $b_n \coloneqq h^0(A, \Omega^1_A/ B_{n,A})$. 
We have only two cases to consider:
\begin{enumerate}
    \item $A$ is superspecial, so $A = E \times E$ for $E$ a supersingular elliptic curve and its Artin invariant is $1$. In this case we have $b_n = 2 + n$ for every $n \geq 0$ and hence $b_{n} - b_{n-1} = 1$. This shows that $\Br^\delta_1(A) = \Br(A)$ due to Corollary \ref{cor surj br}.
    \item Or the Artin invariant is $2$ and $A$ is only isogenous to $E \times E$ but not isomorphic. In this case we have $b_0 = 2$, $b_1 = 2$, $b_n = 1+ n$ for $n \geq 2$. Hence now we have $b_1 - b_0 = 0$ and so $\Br^\delta_1(A) = 0$ but $b_2 - b_1 = 1$ and so $\Br^\delta_2(A) = \Br(A)$ due to the same corollary. 
\end{enumerate}

\subsubsection{K3 surfaces}
In a similar way we prove
\begin{thm} \label{thm Artin level}
Let $X$ be a supersingular K3 surface of Artin invariant $a$ over an algebraically closed field $K$. Then $H^0(X, \Omega^1_X/ B_{n,X}) = 0$ for $n < a$ and $\Br^\delta_a(X) = \Br(X)$.
\end{thm}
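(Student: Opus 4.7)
The plan is to mirror the abelian surface example above, reducing the theorem to a dimension count on certain $H^1$'s and then invoking the computations of \cite{MR1776939} to finish. Set $b_n := h^0(X, \Omega^1_X/B_{n,X})$ throughout.

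First I would reduce to cohomological data. Since $K$ is algebraically closed, hence perfect, $\Omega^1_K = 0$ and $\Omega^1_X = \Omega^1_{X/K}$, so $H^0(X, \Omega^1_X) = 0$ because $X$ is a K3 surface. A short induction on $n$, based on the sequences $0 \to B_1 \to B_{n+1} \xrightarrow{C} B_n \to 0$ of Section~\ref{first section} and the base case $H^0(X, B_1) = 0$ (itself a consequence of $K = K^p$ together with $H^1(X, \mathcal{O}_X) = 0$), gives $H^0(X, B_{n,X}) = 0$ for every $n \geq 0$. The long exact sequence attached to $0 \to B_{n,X} \to \Omega^1_X \to \Omega^1_X/B_{n,X} \to 0$ then identifies
\[
b_n = \dim_K \ker\bigl( H^1(X, B_{n,X}) \to H^1(X, \Omega^1_X) \bigr).
\]

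Next I would record the shape of the Brauer group. The formal Brauer group of a supersingular K3 is $\hat{\mathbb{G}}_a$, so the representing group scheme $\Br_{X/K}$ is $\mathbb{G}_a$ and $\dim \Br_{X/K}[p] = 1$; moreover $\Br(X) \cong (K, +)$ is $p$-torsion, so $\Br(X) = \Br(X)[p]$. By Corollary~\ref{cor surj br}, the theorem then reduces to the two numerical statements: $b_n = 0$ for $n < a$, and $b_a - b_{a-1} \geq 1$.

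Both facts will come from \cite{MR1776939}. Concretely, I would iterate the short exact sequences $0 \to B_1 \to B_{n+1} \to B_n \to 0$ and combine them with the Cartier isomorphism $C : Z^1_X/B_{n,X} \xrightarrow{\sim} \Omega^1_X/B_{n-1,X}$ to express $H^1(X, B_{n,X})$ (and its map to $H^1(X, \Omega^1_X)$) in terms of the Hodge-Witt/crystalline filtration on $H^2(X)$, which for a supersingular K3 is governed by the Artin invariant. The required input from \cite{MR1776939} is that the kernel of $H^1(X, B_{n,X}) \to H^1(X, \Omega^1_X)$ vanishes for $n < a$ and picks up exactly one new dimension at level $n = a$.

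The hard step will be translating the explicit formulas of \cite{MR1776939} (phrased in terms of the characteristic subspaces of $H^2_{\mathrm{cris}}(X/W)$ and the filtration recorded by $a$) into the precise statement about $b_n$ described above. Once this translation is carried out, Corollary~\ref{cor surj br} applied at level $n = a$ immediately yields $\Br^\delta_a(X) = \Br(X)[p] = \Br(X)$, in complete parallel with the superspecial abelian surface case treated earlier.
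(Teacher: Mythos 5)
Your proposal is correct and follows essentially the same route as the paper: reduce $b_n$ to the kernel of $H^1(X,B_{n,X})\to H^1(X,\Omega^1_X)$ via $H^0(X,\Omega^1_X)=0$, quote \cite{MR1776939} for the behaviour of that kernel, and conclude via Corollary~\ref{cor surj br} and $\Br_{X/K}\cong\mathbb{G}_a$. The ``hard translation step'' you defer is in fact unnecessary: \cite{MR1776939}[Cor.\ 11.12] already states directly that $\dim_K H^1(X,B_{n,X})=n$ and that the map to $H^1(X,\Omega^1_X)$ is injective for $n<a$ with one-dimensional kernel at $n=a$.
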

\begin{proof} 
From \cite{MR1776939}[Cor. 11.12] we have that $\dim_{K} H^1(X, B_{n,X}) = n$ and that the map $ H^1(X, B_{n,X}) \rightarrow H^1(X, \Omega^1_X)$ is injective for $n < a$ and has a one-dimensional kernel for $n = a$. Thus $H^0(X, \Omega^1_X/ B_{a-1},X) = 0$ and $H^0(X, \Omega^1_X/ B_{a,X}) \cong K$ is a one-dimensional $K$-vector space. Since $\Br_{X/K} \cong \mathbb{G}_a$ we conclude. 
\end{proof}
\subsubsection{Pairing infinitesimal torsors} \label{sub examples}
Another useful way to obtain differential Brauer classes is by pairing $\alpha_{p^n}$-torsors. For $X/K$ smooth one has $H^1(X_{\mathrm{fppf}}, \alpha_p) \cong H^0(X, B^1_X)$ again by applying \cite{MR559531}[Theorem 3.9] as in the beginning of Section \ref{section p torsion}. One constructs a pairing
$$H^0(X, B^1_X) \times H^0(X, B^1_X) \rightarrow H^0(X, \Omega^1_X/ B^1_X) $$
as follows: pick two sections $\omega, \sigma \in H^0(X, B^1_X)$. For an affine open cover $U_i = \Spec(A_i)$ we let $\omega$ and $\sigma$ be given respectively by local functions $f_i,g_i \in A_i$ such that $d(f_i) - d(f_j) = 0 = d(g_i) - d(g_j) $ in $\Omega^1_{A_{ij}}$ where $\Spec(A_{ij}) = U_{ij}$. Since $X/K$ is smooth, this is equivalent to the fact that $f_i - f_j \in A_{ij}^p$ (and similarly for $g_i$) because $X$ is smooth by assumption.  Now, the various $g_i d(f_i) \in \Omega^1_{A_i}$ glue to a section of $H^0(X, \Omega^1_X/ B^1_X)$. In fact, on the overlaps $U_{ij} = \Spec(A_{ij})$ we can write $f_i - f_j = s_{ij}^p$ and $g_i - g_j = t_{ij}^p$ have $$g_i d(f_i) - g_j d(f_j) = t_{ij}^p d(f_i) +  g_j d(f_i) - g_j d(f_j) = d(t_{ij}^p f_i).$$
We thus define  $\langle \omega, \sigma \rangle$ as the section of $\Omega^1/B_1$ represented by $\{ g_i d f_i \}_i$. This is clearly bilinear, and moreover $$\langle \omega, \sigma \rangle  + \langle \sigma , \omega \rangle = \{ g_i d f_i + f_i d g_i \} = \{ d(f_i g_i) \},$$ so the pairing is also alternating. Note that one also has a perfect pairing of group schemes $\alpha_p \times \alpha_p \rightarrow \mu_p$ which induces a pairing $$H^1(X, \alpha_p) \times H^1(X, \alpha_p) \rightarrow \Br(X)[p].$$
\begin{prop}
    We have a commutative diagram of pairings
  \begin{center}
       \begin{tikzcd}
 H^0(X, B^1_X) \times H^0(X, B^1_X) \arrow[r] \arrow[d]
    & H^0(X, \Omega^1_X/ B^1_X) \arrow[d] \\
  H^1(X, \alpha_p) \times H^1(X, \alpha_p) \arrow[r]
& \Br(X)[p]. \end{tikzcd}
  \end{center} 
\end{prop}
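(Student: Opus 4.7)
The plan is to reduce the commutativity to the function field case and identify both sides as the same cyclic algebra via Proposition \ref{prop azu}. Since both pairings are functorial under Zariski restriction, I would first pass to an affine open $U = \Spec(A)$ on which $\omega = df$ and $\sigma = dg$ hold globally. Because $\Br_X[p]$ is a Zariski sheaf whose sections inject into the generic stalk for smooth $X$ by purity, it then suffices to verify the equality of two Brauer classes after further restriction to $K = \Frac(A)$. On the top row one gets $\langle df, dg \rangle = g\,df$, and by Proposition \ref{prop azu} its image in $\Br(K)[p]$ is the cyclic algebra $[fg, f)$.

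For the bottom row I would first identify the Cartier self-pairing $\alpha_p \times \alpha_p \to \mu_p$ as the truncated exponential $(a, b) \mapsto \exp(ab) := \sum_{i=0}^{p-1}(ab)^i/i!$, which is well-defined since $(ab)^p = 0$ and lands in $\mu_p$ because $\exp(ab)^p = 1$ in characteristic $p$. Under the isomorphism $H^1_{\fppf}(-, \alpha_p) \cong H^0(-, B^1)$ obtained from the Frobenius sequence $0 \to \alpha_p \to \Ga \xrightarrow{F} \Ga \to 0$ (together with the vanishing of $R^1\pi_*\Ga$), the class $[df]$ corresponds to the $\alpha_p$-torsor $V_f = \Spec(K[T]/(T^p - f))$, and analogously for $[dg]$. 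On the common fppf cover $W = \Spec(K[T, S]/(T^p-f, S^p-g)) \to \Spec(K)$, these classes are \v{C}ech-represented by the 1-cocycles $T_1 - T_2,\ S_1 - S_2 \in \alpha_p(W \times_K W)$, and their cup product is the 2-cocycle
\[
c_{123} \;=\; \exp\bigl((T_1 - T_2)(S_2 - S_3)\bigr) \;\in\; \mu_p(W \times_K W \times_K W).
\]

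The hardest step will be to identify the class of $c_{123}$ in $H^2_{\fppf}(K, \mu_p)$ with $[fg, f)$ after projecting to $\Br(K)[p]$. A convenient route is the projection formula for boundary maps: using the auxiliary pairing $\Ga \times \alpha_p \to \mu_p,\ (x,a) \mapsto \exp(xa)$, one has $\partial(f) \cup \partial(g) = \partial\bigl(f \cup \partial(g)\bigr)$, which collapses the double fppf cup product into a Kummer class $f \cup \partial(g) \in H^1(\mu_p) \cong K^\times/(K^\times)^p$ followed by a single Kummer boundary. Tracing the resulting cocycle through Milne's isomorphism $H^2_{\fppf}(-, \mu_p) \cong H^1_{\et}(-, \Omega^1_{\log})$ and then through Theorem \ref{thm p torsion br} requires careful bookkeeping with \v{C}ech orderings and sign conventions, in particular to align with the alternating symmetry verified on the top row. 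An alternative, possibly cleaner, approach is to invoke the explicit closed-form Beli symbol $((-,-))_p$ from Section \ref{sub examples}: once one knows this symbol computes the $\alpha_p \cup \alpha_p$ pairing, commutativity reduces to a direct comparison of two explicit formulas over the field $K$ and an appeal to Proposition \ref{prop azu}.
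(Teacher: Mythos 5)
Your reduction to the function field and the top-row computation ($\langle df, dg\rangle = g\,df \mapsto [fg,f)$ via Proposition \ref{prop azu}) are exactly the paper's proof, which consists of precisely this reduction (using that $\Br_X[p]$ is a Zariski sheaf injecting into the generic stalk) followed by a citation to the reference on the symbol $((-,-))_p$ for the remaining cocycle computation. The one step you add beyond the paper, the ``projection formula'' $\partial(f)\cup\partial(g)=\partial\bigl(f\cup\partial(g)\bigr)$, does not typecheck as written: $f\cup\partial(g)$ would live in $H^1(\mu_p)\cong K^\times/(K^\times)^p$, and there is no ``single Kummer boundary'' $H^1(\mu_p)\to H^2(\mu_p)$ (the Kummer boundary landing in $H^2(\mu_p)$ starts from $H^1(\mathbb{G}_m)=\Pic$), so of your two proposed routes for identifying the class of $c_{123}$ with $[fg,f)$ only the one deferring to the cited direct computation is safe --- which is also all the paper itself does.
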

The proof is a direct computation and can be found e.g. in \cite{beli2018analoguespnthhilbertsymbol}(it is enough to show commutativity at the level of function fields). One generalizes this for any $n \geq 0$ using the differentials $D_n \colon W_n \rightarrow B_n$. We can interpret a section $H^0(X,B_n)$ on an affine open cover $U_i = \Spec(A_i)$ as given by $\underline{f}_i \in W_n(A_i)$ such that $\underline{f}_i - \underline{f}_j \in FW_n(A_{ij})$. Then, one constructs the pairing 
$$H^0(X,  B_{n,X}) \times H^0(X, B_{n,X}) \rightarrow H^0(X, \Omega^1_X/ B_{n,X}) $$
with the analogue of the formula above $\langle \omega, \sigma \rangle = \{ D_n(\underline{f}_i)g_{0,i}^{p^{n-1}} \} \in H^0(X, \Omega^1_X/B_n)$, where $\sigma$ is represented by $\underline{g}_i = (g_{0,i}, g_{1,i}, \cdots, g_{{n-1},i})$. This is again well defined and alternating. For example, we have
\begin{prop}
    Let $A$ be a supersingular abelian surface over a closed field. Then 
    \begin{enumerate}
        \item If $A$ is superspecial, 
        $$H^0(A, B_{1,A}) \times H^0(A, B_{1,A})  \rightarrow \Br(A) $$
        is surjective;
        \item If $A$ is not superspecial, 
        $$H^0(A, B_{2,A}) \times H^0(A, B_{2,A})  \rightarrow \Br(A) $$
        is zero.
    \end{enumerate}
\end{prop}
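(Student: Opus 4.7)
The plan is to reduce each part to the explicit local formula for the pairing together with the dimension data recorded in the two abelian surface examples. Throughout I take $K$ algebraically closed so that $\Omega^1_K = 0$, all the relevant cohomology groups are $K$-vector spaces, and $\Br(A)[p] \cong (K,+)$ is one-dimensional over $K$.

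First I would pin down the relevant dimensions. In both cases $A$ is supersingular, so the Cartier operator annihilates $H^0(A, \Omega^1_A)$; combined with $H^0(A, Z^1_A) = H^0(A, \Omega^1_A)$ (invariant $1$-forms on an abelian variety are closed), the sequence $0 \to B^1_A \to Z^1_A \xrightarrow{C} \Omega^1_A \to 0$ gives $H^0(A, B^1_A) \cong K^{\oplus 2}$. Iterating with $0 \to B^1_A \to B_{2,A} \xrightarrow{C} B^1_A \to 0$ and comparing with the values of $b_n = h^0(A, \Omega^1_A/B_{n,A})$ from the examples then controls $H^0(A, B_{2,A})$ in each case.

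For part (1), with $A = E \times E$ superspecial, I would exhibit one explicit pair of forms whose Brauer image is nonzero. Fix $\omega_E \in H^0(E, B^1_E)$ nonzero and write $\omega_E|_{U_i} = df_i$ on an affine cover. The pullbacks $\omega_1 = p_1^*\omega_E$ and $\omega_2 = p_2^*\omega_E$ span $H^0(A, B^1_A)$, and $\langle \omega_1, \omega_2\rangle$ is represented locally by $p_2^*f_i \cdot d(p_1^*f_i)$. By Proposition \ref{prop azu}, its image at the generic point is the cyclic algebra $[p_1^*f_1 \cdot p_2^*f_1,\, p_1^*f_1)$. Its non-triviality reduces to checking that the Artin--Schreier extension $x^p - x = p_1^*f_1 \cdot p_2^*f_1$ of $K(A) = K(E)\otimes_K K(E)$ is non-split, which follows from the algebraic independence of $p_1^*f_1$ and $p_2^*f_1$ over $K$. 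Since the pairing is $K$-bilinear and alternating on $K^{\oplus 2}$ with values in a one-dimensional $K$-space, a single nonzero value forces surjectivity.

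For part (2), with $A$ non-superspecial of Artin invariant $2$, the example records $\Br^\delta_1(A) = 0$, so any class in the image of $H^0(A, \Omega^1_A/B^1_A) \to H^0(A, \Omega^1_A/B_{2,A})$ dies in $\Br(A)[p]$. The goal is therefore to show that the pairing $H^0(A, B_{2,A})^{\times 2} \to H^0(A, \Omega^1_A/B_{2,A})$ factors through this subspace. This is the main obstacle. Using the Witt-vector formula $\langle \underline{f}, \underline{g}\rangle = D_2(\underline{f})\, g_0^p$ with $D_2(f_0, f_1) = df_1 + f_0^{p-1}\,df_0$, one must verify that the Witt-vector lifts $\underline{f}$ available to represent sections of $B_{2,A}$ on non-superspecial $A$ are constrained enough that $D_2(\underline{f})\,g_0^p$ always agrees modulo $B^1_A$ with a section of $B_{2,A}$. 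The cleanest route is probably to pull the pairing back along the $\alpha_p$-isogeny $\pi \colon E\times E \to A$ with non-split kernel (which distinguishes this case from the superspecial one) and to check that $\pi^*$ of any element of $H^0(A, B_{2,A})$ already lies in $H^0(E\times E, B^1_{E\times E})$ modulo exact forms, so that the pulled-back pairing reduces to a sum of level-one pairings whose descent to $A$ is trivial.
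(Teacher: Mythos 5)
Both halves of your argument have real gaps, and the second one starts from a false premise.

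\textbf{Part (1).} Your non-triviality criterion is wrong: the cyclic algebra $[a,b)$ is split if and only if $b$ is a norm from the Artin--Schreier extension $F(\wp^{-1}(a))/F$, so non-splitness of that extension is necessary but not sufficient for $[a,b)\neq 0$ in $\Br(F)$. (And even the non-splitness itself does not simply ``follow from algebraic independence'' of $p_1^*f$ and $p_2^*f$; that needs an argument.) The paper never touches cyclic algebras here. Instead it observes that the kernel of $H^0(A,\Omega^1_A/B_{1,A})\to\Br(A)$ is, up to finite index, $(1-C^{-1})H^0(A,\Omega^1_A)$; since every global $1$-form on $A$ lies in $B_2$, this kernel consists of classes of \emph{closed} forms (they live in $B_{3,F}/B_{1,F}$). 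It then suffices to note that $d(kf_1\,df_2)=k\,df_1\wedge df_2$ spans $H^0(A,\Omega^2_A)\neq 0$, so $kf_1\,df_2$ is not closed for $k\neq 0$ and cannot lie in the kernel. If you want to salvage your route you must replace ``the Artin--Schreier extension is non-split'' by ``$p_1^*f$ is not a norm from it,'' which is exactly the hard part you have skipped.

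\textbf{Part (2).} Your setup asserts that supersingularity forces $C$ to annihilate $H^0(A,\Omega^1_A)$, hence $H^0(A,B_{1,A})\cong K^{\oplus 2}$ in both cases. This is false for non-superspecial $A$: there the Cartier/Hasse--Witt action on the $2$-dimensional space $H^0(A,\Omega^1_A)$ is nilpotent of rank one, so $H^0(A,B_{1,A})$ is \emph{one}-dimensional (as the paper records). This is not a cosmetic error --- it is the entire mechanism of part (2): $H^0(A,B_{2,A})$ is spanned by the single $B_1$-generator $df_1$ together with one element $dg_1+g_0^{p-1}dg_0$; pairing anything against $df_1=D_2((0,f_1))$ produces $g_0^{p}\,df_1=d(g_0^{p}f_1)\in B_{1}$, which dies in $\Omega^1_A/B_{2,A}$, and the self-pairing of the remaining generator vanishes because the pairing is alternating. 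Hence the pairing is identically zero in $H^0(A,\Omega^1_A/B_{2,A})$, which is stronger than the factorization through $\Br^\delta_1(A)=0$ you aim for. Your proposed isogeny-pullback argument is flagged by you as ``probably'' the cleanest route and is not carried out, so as written part (2) is not proved.
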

\begin{proof}
\begin{itemize}
    \item In this case $H^0(A, B_{1,A}) = H^0(A, \Omega^1_A)$ so it is $2$-dimensional $K$-vector space. Let $df_1$ and $df_2$ be a basis and consider the element $f_1 df_2 \in H^0(A, \Omega^1_A/ B_{1,A})$. It is enough to show that $kf_1 df_2 \in \Omega^1_F/ B_{1,F}$ cannot map to zero in $\Br(F)$ where $F$ is the function field of $A$, i.e., that $kf_1 df_2 \neq (1- C^{-1}) \omega $ for some $\omega \in H^0(A, \Omega^1_A)$, since $H^0(A, \Omega^1_A)$ maps to a finite index subgroup of the kernel of $H^0(A, \Omega^1_A/ B_{1,A}) \rightarrow \Br(A)$. But $H^0(A, \Omega^1_A)$ consists of $B_2$-forms, hence $(1- C^{-1})(H^0(A, \Omega^1_A)) \subset B_{3,F} / B_{1,F}$. In particular each such form must be closed. But we have $d(k f_1 df_2) = d(f_1) \wedge d(f_2)$ and hence we only need to prove that $d(f_1) \wedge d(f_2) \neq 0$. But $d(f_1) \wedge d(f_2)$ spans $\bigwedge^2 H^0(A, \Omega^1_A) = H^0(A, \Omega^2_A) \neq 0$ which proves the claim.  
    \item Now we have that $H^0(A, B_{1,A})$ is one dimensional spanned by $df_1$ and $H^0(A, B_{2,A})$ is two dimensional spanned by $d f_1$ and $d g_1 + g_0^{p-1} dg_0$. An easy computation shows that the pairing 
    $$H^0(X,  B_{2,X}) \times H^0(X, B_{2,X}) \rightarrow H^0(X, \Omega^1_X/ B_{2,X}) $$
must be constantly zero. 
\end{itemize} 
\end{proof}
In fact, in order for the pairing $H^0(X,B_{b,X}) \times H^0(X, B_{b,X}) \rightarrow H^0(X, \Omega^1_X/ B_{b,X})$ to be non-zero, one needs at least that $\dim(H^0(X, B_{n,X})) - \dim(H^0(X, B_{n-1,X})) \geq 2$. 
\begin{rmk}
The fact that $H^1(A, \alpha_p) \otimes H^1(A, \alpha_p) \rightarrow \Br(A)[p]$ is surjective when $A$ is superspecial is also proved in \cite{skorobogatov2024boundednesspprimarytorsionbrauer} via different methods. 
\end{rmk}
\subsection{Absolute vs relative differentials} \label{section abs vs rel}
To produce differential Brauer classes over non-closed fields we need to study global sections of $\Omega^1_{X}/ B_{n,X}$. In general, we understand the sections of $\Omega^1_{X/K}/ B_{n,{X/K}}$ (to be defined later) or equivalently of $\Omega^1_{\overline{X}}/ B_{n,\overline{X}}$ and we would like to know when these sections come from sections of $\Omega^1_X / B_{n,X}$. For $n=0$ the answer lies in the Kodaira-Spencer map. For $n \geq 1$ the answer is more complex. 
We begin by looking at the case $n=0$, where due to the smoothness of $X/K$ we have the exact sequence
\begin{equation} \label{sequence omega}
    0 \rightarrow \Omega^1_K \otimes \Oo_X \rightarrow \Omega^1_X \rightarrow \Omega^1_{X/K} \rightarrow 0
\end{equation}
and the first connecting morphism (which is the Kodaira-Spencer map of \( X/K \))
\[
\delta \colon H^0(X, \Omega^1_{X/K}) \rightarrow H^1(X, \mathcal{O}_X) \otimes \Omega^1_K
\]
can in general be nontrivial, and it may happen that $H^0(X, \Omega^1_{X/K}) \neq 0$ but $H^0(X, \Omega^1_X) = \Omega^1_K$ so, in particular, that $\Br^\delta_0(X) \subset \Br(K)$ but $\Br^\delta_0(\overline{X}) \neq 0$. We make an example of this phenomenon, see also \cite{zbMATH01584207}[Section 5.1] for more general examples: 
\begin{exs}
Take \( K = \mathbb{F}(t) \), where \( \mathbb{F} \) is a perfect field, and let \( E_t/K \) be the Legendre family \( y^2 = x(x-1)(x-t) \) over \( K \), assuming \( p \geq 5 \). The differential form \( \omega = dx/(2y) \) defines a global section of \( \Omega^1_{X/K} \). Since we could not find a reference we show that \( \delta(\omega) \neq 0 \) for the reader’s convenience. Let \( P(x,t) = x(x-1)(x-t) \), and denote by \( P_x \) and \( P_t \) its partial derivatives. At the point at infinity $[0 \colon 1 \colon 0]$ introduce coordinates \( u, v \) with \( vx = u \) and \( vy = 1 \), so that the equation for $E_t$ becomes \( v = Q(u,v,t) := u(u-v)(u - tv) \). Consider the Zariski opens \( U_1 = E_t \setminus \{(x,y) \colon x = 0,1,t\} \) and \( U_2 = E_t \setminus (\{ (x,y) \colon P_x(x,t) = 0\} \cup \{ \infty \}) \), which cover \( E_t \) (in particular, $\infty \in U_1$). Let \( U_{12} = U_1 \cap U_2 \). We claim that \( \omega = dx/(2y) \) extends regularly over \( U_1 \). So we only have to check that it extends at infinity. This is the usual computation taking into account that $t$ is not constant. In the coordinates \( u,v \), we compute:
\[
2v \cdot \omega = v\, du - u\, dv,
\]
so we must show that \( u\, dv \) is divisible by \( v \). Using the equation \( Q(u,v,t) = 0 \), we get:
\[
u\, dv = \frac{1}{1 - Q_v}(u Q_u\, du - u Q_t\, dt),
\]
and \( Q_t = vu^2(u - v) \) is divisible by \( v \). It remains to check that \( u Q_u\, du \) is divisible by \( v \). Writing \( u Q_u = 3u^3 + v F(u,v,t) \) for a polynomial $F$ and noting that \( v = Q = u^3 + v G(u,v,t) \) for some other polynomial $G$ we find:
\[
\frac{u^3}{v} = 1 - G(u,v,t) \quad \Rightarrow \quad u Q_u\, du = v(3 - 3G + F)\, du,
\]
which shows that \( \omega \) is regular on \( U_1 \). Now, as elements of $\Omega^1_{\kappa(X)/K}$, we have an equality $\omega = dy / P_x$ over $U_{12}$. The same equation $dy / P_x$ extends then to a global form of $\Omega^1_X$ over $U_2$. Over \( U_{12} \), viewed as a section of \( \Omega^1_X(U_{12}) \), we compute the difference
\[
dx/(2y) - dy / P_x = \frac{P_t}{2y P_x} dt
\]
which is a Čech cocycle representing \( \delta(\omega) \). To see that this is nontrivial, note that over $U_2$ the rational function $P_t/(2yP_x)$ has only a simple pole at $x = t$ (and double zeros at $x = 0,1$). If \( \frac{P_t}{2y P_x} = f_1 - f_2 \) for \( f_i \in \mathcal{O}(U_i) \), then \( f_1 \) would have exactly one simple pole in \( U_2 \), implying that \( f_1 \) has degree one, which is impossible. So $\Br^{\delta}_0(X) \subset \Br(K)$ but $\Br^{\delta}_0(X^p) = \Br(X)[p]$ where $X^p$ is the base-change of $X$ to the perfection of $K$ (see section \ref{varieties with diff sections}).
\end{exs}
On the other hand, the sequence \eqref{sequence omega} splits whenever $X$ descends to $K^p$. This was originally observed by Ogus:
\begin{defi}
We say that $X/K$ descends along the Frobenius of $K$ if there is $Y/K$ with $Y^{(1)} \cong X$. In this case, we denote by $F \colon Y \rightarrow X$ the relative Frobenius of $Y$ and, for any $K$-algebra $R$, we put $X(R)^p \coloneqq F(Y(R))$. 
\end{defi}
\begin{prop} \label{prop split diff}
Assume that $X/K$ descends along the Frobenius of $K$, then the exact sequence \eqref{sequence omega} splits. 
\end{prop}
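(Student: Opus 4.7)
The plan is to construct an explicit $\Oo_X$-linear section $s \colon \Omega^1_{X/K} \to \Omega^1_X$ of the quotient map in \eqref{sequence omega}, using the projection $\pi \colon X = Y^{(1)} \to Y$ onto the first factor of the fiber product. This $\pi$ is a morphism of $\Ff_p$-schemes but \emph{not} of $K$-schemes, and it is precisely its non-$K$-linearity that produces the splitting.

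The decisive step is a local computation. Working affinely with $Y = \Spec B$ and $X = \Spec(B \otimes_{K, F_K} K)$, the comorphism $\pi^\# \colon B \to B \otimes_{K, F_K} K$ sends $b \mapsto b \otimes 1$, and the defining relation of this tensor product gives $k \otimes 1 = 1 \otimes k^p$ for every $k \in K$. In particular, viewed in $\Oo_X$ via its own $K$-structure, $\pi^\#(k) = k^p$, so the induced map on absolute Kähler differentials $\pi^* \colon \Omega^1_Y \to \Omega^1_X$ sends $dk \mapsto d(k^p) = 0$ for every $k \in K$. Consequently $\pi^*$ annihilates the subsheaf $\Omega^1_K \otimes \Oo_Y \subset \Omega^1_Y$ and factors through a map $\bar{\pi}^* \colon \Omega^1_{Y/K} \to \Omega^1_X$.

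Extending $\bar{\pi}^*$ $\Oo_X$-linearly produces a morphism $\pi^* \Omega^1_{Y/K} \to \Omega^1_X$. Invoking the standard base-change formula for relative Kähler differentials applied to $X = Y^{(1)}$ yields a canonical identification $\pi^* \Omega^1_{Y/K} \cong \Omega^1_{X/K}$, and combining these two gives the desired map $s \colon \Omega^1_{X/K} \to \Omega^1_X$. It remains to check that the composite $\Omega^1_{X/K} \xrightarrow{s} \Omega^1_X \to \Omega^1_{X/K}$ is the identity, which is a direct local verification: $s(d_{X/K}(b \otimes 1)) = d(b \otimes 1) \in \Omega^1_X$, whose image in the quotient $\Omega^1_{X/K}$ is again $d_{X/K}(b \otimes 1)$. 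Hence $s$ splits the sequence.

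The main obstacle is bookkeeping: one must keep straight the two distinct $K$-structures (the original one on $Y$ versus the twisted one on $X$) and apply the key identity $\pi^\#(k) = k^p$ in the right ring. A subtler point is that $\bar{\pi}^*$ is \emph{a priori} only $\Oo_Y$-linear through $\pi^\#$, so the $\Oo_X$-linear extension step must be performed before identifying with $\Omega^1_{X/K}$ via the base-change isomorphism; doing the identifications in the wrong order gives a map of the wrong type.
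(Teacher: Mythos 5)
Your proof is correct and is essentially the same argument as the paper's: both exploit the projection $F_K=\pi\colon X=Y^{(1)}\to Y$ and the identity $\pi^\#(k)=k^p$, the only difference being that you split the sequence by exhibiting a section $s$ of the quotient map while the paper exhibits the complementary retraction (the projector $df\mapsto F_K(g)\,dk$ onto $\Omega^1_K\otimes\Oo_X$, whose complement $\mathrm{id}-P$ is exactly your $s$ composed with the quotient). Your packaging via the right-exact cotangent sequence and the base-change isomorphism $\pi^*\Omega^1_{Y/K}\cong\Omega^1_{X/K}$ even sidesteps the well-definedness check for the derivation $d_K$ that the paper leaves implicit.
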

\begin{proof}
We denote by $F_K \colon X \rightarrow Y$ the map induced by the Frobenius of $K$. We can write any local section $f \in \Oo_X$ as $k \cdot F_{K}(g)$ for some $g \in \Oo_Y$ and $k \in K$. Thus, we have $d(f) = d( k F_K(g)) = F_K(g) d(k) + k d( F_K(G))$. One immediately checks that $ f \mapsto  F_K(g) d(k)$ defines a derivation, which we denote $d_K \colon \Oo_X \rightarrow \Omega^1_X$. By the universal property of the K\"{a}hler differentials we then get a unique morphism of $\Oo_X$-modules $\Omega^1_X \rightarrow \Omega^1_X$ such that the composition with $d \colon \Oo_X \rightarrow \Omega^1_X$ yields $d_K$. This is easily seen to be a projector onto $\Omega^1_K \otimes \Oo_X$, which proves the result. 
\end{proof}
In particular
\begin{cor}
Up to a finite (inseparable) field extension of $K$ of exponent one, we can assume that \eqref{sequence omega} splits. 
\end{cor}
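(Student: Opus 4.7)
The plan is to reduce to Proposition \ref{prop split diff}. Applied to $X_L/L$, that proposition shows it suffices to exhibit a finite extension $L/K$ with $L^p \subseteq K$ such that $X_L$ descends along the Frobenius of $L$, i.e., such that there exists $Y/L$ with $Y^{(1)} \cong X_L$; the splitting of \eqref{sequence omega} base-changed to $L$ will then follow.

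First I would invoke the standard spreading-out principle: since $X/K$ is of finite type, there exist a finitely generated subring $R_0 = \mathbb{F}_p[a_1, \ldots, a_r] \subseteq K$ and a model $X_0/R_0$ with $X \cong X_0 \times_{R_0} K$. I then fix $p$-th roots $b_i$ of the $a_i$ in an algebraic closure of $K$ and set
$$L := K(b_1, \ldots, b_r).$$
This extension is finite of degree dividing $p^r$, purely inseparable, and of exponent one because $L^p \subseteq K(a_1, \ldots, a_r) \subseteq K$.

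Finally, to produce $Y/L$ with $Y^{(1)} \cong X_L$, I would twist the defining data by $p$-th roots. Let $\phi \colon R_0 \to L$ be the ring map sending $a_i \mapsto b_i$ and set $Y := X_0 \times_{R_0, \phi} L$. Since $F_L(b_i) = b_i^p = a_i$, the composition $F_L \circ \phi$ coincides with the structural inclusion $R_0 \hookrightarrow K \hookrightarrow L$. Associativity of fiber products then gives
$$Y^{(1)} = Y \times_{L, F_L} L \cong X_0 \times_{R_0} L \cong X_L$$
as $L$-schemes, and Proposition \ref{prop split diff} applied to $X_L/L$ finishes the proof. The main (and really only nontrivial) step is the spreading-out assertion, which is standard for finite-type schemes; the remaining content is a purely formal base-change computation.
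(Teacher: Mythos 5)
Your proof is correct and follows essentially the route the paper intends: the corollary is stated as an immediate consequence of Proposition \ref{prop split diff}, obtained by passing to a finite exponent-one subextension of $K^{1/p}$ over which $X$ acquires a Frobenius descent $Y$ with $Y^{(1)}\cong X_L$, exactly as you construct. Your spreading-out step is a welcome refinement (it yields a \emph{finite} extension even when $[K:K^p]$ is infinite, whereas the crude base change to $K^{1/p}$ mentioned in the introduction needs $[K:K^p]<\infty$); the only point left implicit is that $\phi\colon R_0\to L$, $a_i\mapsto b_i$, is well defined, which holds because any relation $P(a_1,\dots,a_r)=0$ gives $P(b_1,\dots,b_r)^p=P(a_1,\dots,a_r)=0$ and Frobenius is injective on the field $L$.
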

The situation is slightly more complex for the sheaves $\Omega^1_X / B_{n,X}$. Note that we have not defined $B_{n,X/K}$ yet, and we have two choices to do so:
\begin{enumerate}
    \item We can define $B_{n,X/K} $ as the image of $B_{n,X}$ in $\Omega^1_{X/K}$ or
    \item We can define $B_{n,X/K} $ via the iterated relative Cartier operator $C_{X/K} \colon Z^1_{X/K} \rightarrow \Omega^1_{X^{(1)/K}} $.
\end{enumerate}
We explain now why the second choice is the best for us. Suppose we define $B_{n,X}$ as in point (1). Then, there is the following nuance:
\begin{lemma}
The injection $\Omega^1_{X/K} \subset \Omega^1_{\overline{X}}$ does not satisfy $\mathrm{im}(B_{n,X} \rightarrow \Omega^1_{X/K} ) = B_{n, \overline{X}} \cap \Omega^1_{X/K}$ (the inclusion $\subset$ always holds).
\end{lemma}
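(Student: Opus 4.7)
The plan is to dispatch the easy direction first, then produce an explicit counterexample for $n \geq 2$.

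For the inclusion $\mathrm{im}(B_{n,X} \to \Omega^1_{X/K}) \subset B_{n,\overline{X}} \cap \Omega^1_{X/K}$, I would observe that $\Omega^1_{\overline{K}} = 0$ since $\overline{K}$ is perfect, so the pullback $\Omega^1_X \otimes_K \overline{K} \to \Omega^1_{\overline{X}}$ kills the $\Omega^1_K \otimes \Oo_X$-summand and the composition $\Omega^1_X \to \Omega^1_{\overline{X}}$ factors through $\Omega^1_{X/K}$; since the absolute Cartier operator is compatible with base change, $B_{n,X}$ maps into $B_{n,\overline{X}}$, yielding the asserted containment.

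For the strict inclusion, the strategy is to exhibit a form which becomes a $B_2$-class over $\overline{K}$ only after extracting a $p$-th root not available over $K$, yet admits no closed lift to the absolute de Rham complex of $X$. Take $K = \Ff_p(t)$, $X = \mathbb{A}^1_K = \Spec K[x]$, and $\omega = t x^{p-1}\, dx \in \Omega^1_{X/K}$. Over $\overline{K}$, the Witt vector $(t^{1/p} x, 0) \in W_2(\overline{K}[x])$ satisfies
\[
D_2(t^{1/p} x, 0) = (t^{1/p} x)^{p-1}\, d(t^{1/p} x) = t\, x^{p-1}\, dx = \omega,
\]
using $d(t^{1/p}) = 0$ (since $\overline{K}$ is perfect). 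So $\omega \in B_{2,\overline{X}} \subset B_{n,\overline{X}}$ for every $n \geq 2$, whence $\omega \in B_{n,\overline{X}} \cap \Omega^1_{X/K}$.

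To rule out $\omega \in \mathrm{im}(B_{n,X} \to \Omega^1_{X/K})$, I would prove the stronger statement that $\omega$ has no closed lift in $\Omega^1_X$. Any lift has the form $\tilde{\omega} = t x^{p-1}\, dx + h(x,t)\, dt$ with $h \in K[x]$; then $d \tilde{\omega} = (\partial_x h - x^{p-1})\, dx \wedge dt$, and requiring $d\tilde{\omega} = 0$ forces $\partial_x h = x^{p-1}$. Writing $h = \sum_i h_i(t)\, x^i$, the coefficient of $x^{p-1}$ in $\partial_x h$ equals $p\, h_p(t) = 0$, a contradiction. Hence no lift of $\omega$ even lies in $Z^1_X$, let alone in $B_{n,X}$.

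The main obstacle is choosing $\omega$ correctly: the simplest candidates fail, either landing in the image already (e.g.\ $t\, dx = d(tx)$ modulo $\Omega^1_K \otimes \Oo_X$) or failing to be in $B_{n,\overline{X}}$ (e.g.\ $x^{p-1}\, dx$, which is not even exact after base change). The point is that the discrepancy is controlled by $K/K^p$, so picking $t \in K \setminus K^p$ forces the obstruction.
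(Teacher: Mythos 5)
Your proof is correct and uses essentially the same counterexample as the paper: $X=\mathbb{A}^1_K$ with $\omega = t\,x^{p-1}\,dx$ for $t\in K\setminus K^p$, which lies in $B_{2,\overline{X}}$ via the Witt-vector description but admits no closed lift to $\Omega^1_X$. The paper merely asserts the failure ("otherwise it is not true"), whereas you supply the verification via the coefficient-of-$x^{p-1}$ computation, which is a welcome addition.
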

\begin{proof}
Take $X = \Spec(K[x])$ and consider $\omega = tx^{p-1}d_{X/K}(x)$ for $t \in K$. Now if $t = s^p$ then $\omega = (sx)^{p-1}d_{X/K}(sx)$ is in the image of $B_{1,X}$ but otherwise it is not true.
\end{proof}
Since we would like to consider $B_{n/K}$ as a $K$-linear objects, we define $$B_{n, X/K} \coloneqq \Omega^1_{X/K} \cap B_{n, \overline{X}} = K \cdot \mathrm{im}(B_{n,X} \rightarrow \Omega^1_{X/K} ) $$ It is not hard to see that this corresponds to the second definition from before. Now the inclusion $B_{n, X/K} \subset B_{n, \overline{X}}$ induces an identification $B_{n, X/K} \otimes_K \overline{K} \cong B_{n, \overline{X}}$ and in particular by flatness if $H^0(\overline{X}, \Omega^1_{\overline{X}}/ B_{n, \overline{X}}) \neq 0$ then also $H^0(X, \Omega^1_{X}/ B_{n, X/K}) \neq 0$.

\begin{prop}
If $X$ descends to $Y/K^{p^n}$ the $F_K^n$-pullback induces a map 
$$\phi \colon H^0(Y, \Omega^1_{Y/K}/ B_{n,Y/K}) \rightarrow H^0(X, \Omega^1_X/ B_{n,X})$$ such that the composition $$\tilde{\phi} \colon H^0(Y, \Omega^1_{Y/K}/ B_{n,Y/K}) \xrightarrow{\phi} H^0(X, \Omega^1_X/ B_{n,X}) \rightarrow H^0(X, \Omega^1_{X/K}/ B_{n,X/K})$$ is injective and becomes an isomorphism over $\overline{K}$.
\end{prop}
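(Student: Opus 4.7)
The plan is to identify $\tilde{\phi}$ with the canonical base-change inclusion $m\mapsto m\otimes 1$, from which both injectivity and the isomorphism over $\overline{K}$ are immediate.

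First I would construct $\phi$ via the natural morphism of schemes $p\colon X\to Y$ arising from the identification $X\cong Y\otimes_{K^{p^n}} K$. Although $p$ is not a $K$-morphism, it is a morphism over $\Spec\mathbb{F}_p$, so $p^{\#}$ is a ring map and commutes with both the universal derivation and the Cartier operator; hence $p^{*}$ sends $\Omega^1_Y$ into $\Omega^1_X$ and $B_{n,Y}$ into $B_{n,X}$. The crucial observation is that for any $k\in K^{p^n}$, writing $k=\ell^{p^n}$ gives $p^{*}(dk)=d(\ell^{p^n})=0$ in $\Omega^1_X$, so $p^{*}$ automatically factors through $\Omega^1_Y\twoheadrightarrow\Omega^1_{Y/K}$ and descends to the $B_n$-quotients, yielding $\phi$.

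Next, I would identify $\tilde{\phi}$ as a base-change map. Composing $\phi$ with the quotient $\Omega^1_X\twoheadrightarrow\Omega^1_{X/K}$ gives the pullback along $p$ of the relative quotient sheaf $\Omega^1_{Y/K}/B_{n,Y/K}$. Combining the standard base-change identity $p^{*}\Omega^1_{Y/K}\cong\Omega^1_{X/K}$ with the compatibility of $B_{n}$ with flat base change---cleanest via the description of $B_{n,Y/K}$ through the iterated relative Cartier operator, which is manifestly base-change compatible---one obtains a canonical identification
\[
H^0(X,\Omega^1_{X/K}/B_{n,X/K})\;\cong\;H^0(Y,\Omega^1_{Y/K}/B_{n,Y/K})\otimes_{K^{p^n}} K,
\]
under which $\tilde{\phi}$ corresponds to $m\mapsto m\otimes 1$. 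This is the main technical step: since $\Omega^1_{Y/K}/B_{n,Y/K}$ is only an $\mathcal{O}_Y^{p^n}$-module and not an $\mathcal{O}_Y$-module, the Cartier-operator presentation is the cleanest way to keep the argument within a category of sheaves where flat base change behaves well.

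With this identification in hand, both conclusions follow immediately. Injectivity of $\tilde{\phi}$ follows from the faithful flatness of $K$ over $K^{p^n}$, which makes $m\mapsto m\otimes 1$ injective for any $K^{p^n}$-module. For the isomorphism over $\overline{K}$, a further flat base change yields
\[
H^0(Y,\Omega^1_{Y/K}/B_{n,Y/K})\otimes_{K^{p^n}}\overline{K}\;\cong\;H^0(\overline{Y},\Omega^1_{\overline{Y}}/B_{n,\overline{Y}})\;\cong\;H^0(\overline{X},\Omega^1_{\overline{X}}/B_{n,\overline{X}})\;\cong\;H^0(X,\Omega^1_{X/K}/B_{n,X/K})\otimes_K\overline{K},
\]
where the middle equality uses $\overline{X}=\overline{Y}=Y\otimes_{K^{p^n}}\overline{K}$ as $\overline{K}$-schemes; the induced map $\tilde{\phi}\otimes\overline{K}$ is then the identity.
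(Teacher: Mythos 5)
Your overall route---construct $\phi$ from the morphism $p=F_K^n\colon X\to Y$ and then identify $\tilde{\phi}$ with the base-change map $m\mapsto m\otimes 1$ along the (faithfully flat) extension $K^{p^n}\subset K$---is legitimate and in fact packages the ``rest follows easily'' part of the paper's proof (injectivity, isomorphism over $\overline{K}$) more transparently than the paper does. But there is a genuine gap at the one point where the paper actually has to work, namely the well-definedness of $\phi$. A section of $\Omega^1_{Y/K}/B_{n,Y/K}$ is given by local lifts $\tilde{\omega}_i\in\Omega^1_{U_i}$ whose differences on overlaps lie in $\pi^{-1}(B_{n,U_{ij}/K})=K\cdot B_{n,U_{ij}}+\Omega^1_K\otimes\Oo_{U_{ij}}$, where crucially $B_{n,Y/K}$ is the \emph{$K$-span} $K\cdot\mathrm{im}(B_{n,Y}\to\Omega^1_{Y/K})$ and not just the image of $B_{n,Y}$ (this is exactly the nuance isolated in the lemma preceding the proposition: $B_n$ is only an $\Oo^{p^n}$-module, so the image of $B_{n,Y}$ is not $K$-stable). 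Your first paragraph verifies only that $p^*(B_{n,Y})\subset B_{n,X}$ and $p^*(\Omega^1_K\otimes\Oo_Y)=0$; it says nothing about $p^*(K\cdot B_{n,Y})$, and without that the pullbacks $p^*\tilde{\omega}_i$ need not glue in $\Omega^1_X/B_{n,X}$. Note also that for $\phi$ itself the target is the quotient by the \emph{absolute} $B_{n,X}$, so the ``relative Cartier operator is base-change compatible'' argument you invoke later (which naturally controls the larger relative subsheaf $B_{n,X/K}$) does not substitute for this check.

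The gap is fillable, and the fix is precisely the computation the paper's proof consists of: for $t\in K$ and a local $B_n$-form $\rho=\sum_i f_i^{p^i-1}df_i$, semilinearity gives $p^*(t\rho)=t^{p^n}\sum_i g_i^{p^i-1}dg_i$ with $g_i=p^\#(f_i)$, and the scalar $t^{p^n}$ can be absorbed, e.g.\ via $t^{p^n}g_i^{p^i-1}dg_i=(t^{p^{n-i}}g_i)^{p^i-1}d(t^{p^{n-i}}g_i)$ (equivalently: $t^{p^n}\in\Oo_X^{p^n}$ and $B_{n,X}$ is an $\Oo_X^{p^n}$-module). You should state this explicitly: it is the reason the construction works at level $n$ only after pulling back through $n$ powers of Frobenius, and omitting it hides the actual content of the proposition. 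Once this is added, your base-change identification of $\tilde{\phi}$ and the conclusion over $\overline{K}$ go through.
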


\begin{proof}
Take a global section $\sigma \in H^0(Y, \Omega^1_{Y/K}/ B_{n,Y/K})$. Let $U_i$ be an affine open cover of $Y$. We can then lift $\sigma_{| U_i}$ to some element $\tilde{\omega}_i \in H^0(U_i, \Omega^1_{U_i})$. On the overlapping we must have $\tilde{\omega}_i - \tilde{\omega}_j \in K \cdot B_{n, U_{ij}} + \Omega^1_{K} \otimes \Oo_{U_{ij}} = \pi^{-1}(B_{n, U_{ij}/K}) $ where $\pi \colon \Omega^1_X \rightarrow \Omega^1_{X/K}$ is the natural quotient map. So we only need to show that $(F_K^n)^*(B_{n, U/K} + \Omega^1_{K} \otimes \Oo_{U}) \subset B_{n,F_K^{-n}(U)}$ for every open subset $U \subset Y$. Now, $(F_K^n)^*(\Omega^1_{K} \otimes \Oo_{U}) = 0$ clearly. On the other other hand, we can write any local section $\rho$ of $K \cdot B_{n, U}$ as $\sum_i t_i f_i^{p^i-1} d(f_i)$ with $t_i \in K$; hence if $g_i = (F_K^n)(f_i)$  we have $$(F_K^n)^*(\rho) = \sum_{i=0}^{n-1} t_i^{p^n} g_i^{p^i-1} d(g_i) = \sum_{i=0}^{n-1} (t_i^{p^{n-i}} g_i)^{p^i-1} d(t_i^{p^{n-i}} g_i) \in B_{n,F_K^{-n}(U)}.$$
This shows that $\phi$ is well defined. The rest follows easily.  
\end{proof}
\begin{cor} \label{cor splitting}
Assume that $X$ descends to $K^{p^n}$. Then, if $H^0(\overline{X}, \Omega^1_{\overline{X}}/ B_{n, \overline{X}}) \neq 0$ the map $$H^0(X, \Omega^1_X/ B_{n,X}) \rightarrow H^0(X, \Omega^1_{X/K}/ B_{n,X/K})$$ is not trivial. 
\end{cor}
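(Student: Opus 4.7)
The plan is to deduce the corollary directly from the preceding proposition, whose content is essentially that (after base change to $\overline{K}$) the composite $\tilde{\phi}$ identifies the sections on $Y$ with the relative sections on $X$, while factoring through the absolute sections on $X$. The role of the hypothesis is to guarantee that these common groups are nonzero, after which injectivity of $\tilde{\phi}$ produces a nontrivial class in the middle.

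First, since $\overline{K}$ is perfect, $\Omega^1_{\overline{K}} = 0$, and thus $\Omega^1_{\overline{X}} = \Omega^1_{\overline{X}/\overline{K}}$ and $B_{n,\overline{X}} = B_{n,\overline{X}/\overline{K}}$. The assumption $H^0(\overline{X}, \Omega^1_{\overline{X}}/B_{n,\overline{X}}) \neq 0$ is therefore a statement about relative differentials on $\overline{X}$. Combining this with the identification $B_{n,X/K} \otimes_K \overline{K} \cong B_{n,\overline{X}}$ recorded before the proposition (and the analogous flat base change for $\Omega^1_{X/K}$), flat base change for cohomology gives
\[
H^0(X, \Omega^1_{X/K}/B_{n,X/K}) \otimes_K \overline{K} \;\cong\; H^0(\overline{X}, \Omega^1_{\overline{X}/\overline{K}}/B_{n,\overline{X}/\overline{K}}) \;\neq\; 0,
\]
so in particular $H^0(X, \Omega^1_{X/K}/B_{n,X/K})$ itself is nonzero.

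Next, I invoke the preceding proposition. Since $\tilde{\phi}$ becomes an isomorphism after base change to $\overline{K}$, the source $H^0(Y, \Omega^1_{Y/K}/B_{n,Y/K})$ has the same $\overline{K}$-dimension as the nonzero target and is therefore itself nonzero as a $K$-vector space (it is a finite-dimensional $K$-module whose extension of scalars is nonzero). Injectivity of $\tilde{\phi}$ over $K$ then yields a nonzero element $\sigma \in H^0(Y, \Omega^1_{Y/K}/B_{n,Y/K})$ whose image $\tilde{\phi}(\sigma) \in H^0(X, \Omega^1_{X/K}/B_{n,X/K})$ is nonzero.

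Finally, by construction $\tilde{\phi}$ factors as $\phi$ followed by the map
\[
H^0(X, \Omega^1_X/B_{n,X}) \longrightarrow H^0(X, \Omega^1_{X/K}/B_{n,X/K}),
\]
so $\phi(\sigma) \in H^0(X, \Omega^1_X/B_{n,X})$ is sent to $\tilde{\phi}(\sigma) \neq 0$. Hence this map is nontrivial, which is the conclusion. There is no real obstacle here: the entire argument is an assembly of the preceding proposition and the flat base change identification, and the only thing to be careful about is keeping the absolute versus relative versions of $\Omega^1$ and $B_n$ straight, together with the harmless passage to $\overline{K}$ to certify nonvanishing.
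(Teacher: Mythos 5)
Your argument is correct and is exactly the intended one: the paper leaves the corollary as an immediate consequence of the preceding proposition together with the flat base-change remark ($B_{n,X/K}\otimes_K\overline{K}\cong B_{n,\overline{X}}$, hence nonvanishing of $H^0(X,\Omega^1_{X/K}/B_{n,X/K})$), and your write-up simply makes that deduction explicit. No differences of substance.
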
 So, in this situation, we can attempt to construct differential Brauer classes over non-closed fields.

\section{The Brauer-Manin obstruction} \label{section BM}
In this second and final part of the article we prove Theorem \ref{theorem 1 intro} and its consequences outlined in the introduction. We begin with some basic adelic considerations. Let $K$ be a global function field with constant field $\Ff \subset K$ and let $\mathbf{A}_K \coloneqq {\prod_{v}}' K_v$ be the adeles of $K$. We put
\[
\Omega^1_{\mathbf{A}_K} \coloneqq {{\prod}_{v}}' \Omega^1_{K_v},
\]
where the restricted product is taken over $\Omega^1_{\Oo_{v}} \subset \Omega^1_{K_v}$. We have a Cartier operator
\begin{equation} \label{eq: cartier adeles}
    C = \prod C_{K_v} \colon \Omega^1_{\mathbf{A}_K} \rightarrow \Omega^1_{\mathbf{A}_K}
\end{equation}
and we define $B_{n,\mathbf{A}_K}$ inductively using $C$ as in Section \ref{first section}. One has
\[
B_{n,\A_K} = {{\prod}_{v}}'B_{n,K_{v}},
\]
where again the restricted product is taken over $B_{n,\Oo_{v}}$. Note that any $ \omega \in B_{n, K_v}$ has zero residue. Now, for any integral ring $A$ with a $p$-basis and fraction field $F$, one easily checks that $\Omega^1_{A} \cap B_{n,F} = B_{n,A}$. Hence, one has an injection $\Omega^1_A/ B_{n,A} \subset \Omega^1_F/ B_{n,F}$.

\begin{lemma}
One has 
\[
\Omega^1_{\A_K}/ B_{n, \A_K} \cong {\prod}'_v  \Omega^1_{K_v}/ B_{n,K_v},
\]
where the product is restricted over $\Omega^1_{\Oo_{v}}/ B_{n,\Oo_{v}} \subset \Omega^1_{K_v}/ B_{n,K_v}$.
\end{lemma}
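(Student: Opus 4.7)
The plan is to exhibit a natural map from left to right and verify it is a bijection, relying crucially on the identity $\Omega^1_A \cap B_{n,F} = B_{n,A}$ (stated in the paragraph preceding the lemma) applied at each $A = \Oo_v$, $F = K_v$.

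First I would construct the map. Componentwise quotient gives $\pi_v \colon \Omega^1_{K_v} \to \Omega^1_{K_v}/B_{n,K_v}$. Take $(\omega_v) \in \Omega^1_{\A_K}$, so $\omega_v \in \Omega^1_{\Oo_v}$ for almost all $v$. For each such $v$, the class $\pi_v(\omega_v)$ lies in the image of $\Omega^1_{\Oo_v}/B_{n,\Oo_v}$, which by the identity $\Omega^1_{\Oo_v} \cap B_{n,K_v} = B_{n,\Oo_v}$ injects into $\Omega^1_{K_v}/B_{n,K_v}$. Thus $(\pi_v(\omega_v))_v$ lies in the restricted product $\prod'_v \Omega^1_{K_v}/B_{n,K_v}$, producing a map
\[
\Phi \colon \Omega^1_{\A_K} \longrightarrow {\prod}'_v \Omega^1_{K_v}/B_{n,K_v}.
\]

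Next I would identify $\ker \Phi = B_{n,\A_K}$. Clearly $B_{n,\A_K} \subset \ker \Phi$ since each component of an element of $B_{n,\A_K}$ lies in $B_{n,K_v}$. Conversely, if $(\omega_v) \in \ker \Phi$, then $\omega_v \in B_{n,K_v}$ for every $v$, and $\omega_v \in \Omega^1_{\Oo_v}$ for almost all $v$; by the identity $\Omega^1_{\Oo_v} \cap B_{n,K_v} = B_{n,\Oo_v}$, we get $\omega_v \in B_{n,\Oo_v}$ for almost all $v$, so $(\omega_v) \in B_{n,\A_K}$.

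For surjectivity, take an element $([\omega_v])_v$ of the restricted product, so $[\omega_v] \in \Omega^1_{\Oo_v}/B_{n,\Oo_v}$ for almost all $v$. Using the injection $\Omega^1_{\Oo_v}/B_{n,\Oo_v} \hookrightarrow \Omega^1_{K_v}/B_{n,K_v}$, we may lift $[\omega_v]$ to an element $\tilde\omega_v \in \Omega^1_{\Oo_v}$ at all such places, and lift arbitrarily to some $\tilde\omega_v \in \Omega^1_{K_v}$ at the finitely many remaining places. Then $(\tilde\omega_v) \in \Omega^1_{\A_K}$ maps to $([\omega_v])_v$ under $\Phi$, yielding the claimed isomorphism $\Omega^1_{\A_K}/B_{n,\A_K} \cong \prod'_v \Omega^1_{K_v}/B_{n,K_v}$.

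The main obstacle is the bookkeeping around the restricted-product structure on the right-hand side, which only makes sense because of the identity $\Omega^1_{\Oo_v} \cap B_{n,K_v} = B_{n,\Oo_v}$; without this, the integral subobjects $\Omega^1_{\Oo_v}/B_{n,\Oo_v}$ might not embed in $\Omega^1_{K_v}/B_{n,K_v}$ and both the well-definedness of $\Phi$ and the injectivity step would fail. Everything else is essentially the general principle that quotients commute with restricted products whenever the relevant integral submodule identity holds at almost every place.
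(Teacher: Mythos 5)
Your proof is correct and follows essentially the same route as the paper: both arguments construct the natural componentwise map, identify its kernel with $B_{n,\A_K}$ via the identity $\Omega^1_{\Oo_v} \cap B_{n,K_v} = B_{n,\Oo_v}$ at almost every place, and note surjectivity by lifting componentwise. You merely spell out the well-definedness and surjectivity steps that the paper leaves as ``clearly surjective.''
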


\begin{proof}
The natural map $\Omega^1_{\A_K} \rightarrow {\prod}'_v  \Omega^1_{K_v}/ B_{n,K_v}$ is clearly surjective. If an element $(\omega_v)$ belongs to the kernel, then for every $v$ we must have $\omega_v \in B_{n,K_v}$. So for almost all $v$ we have $\omega_v \in \Omega^1_{\Oo_{v}} \cap B_{n,K_v} = B_{n,\Oo_{v}},$ hence $(\omega_v) \in B_{n,\mathbf{A}_K}$.
\end{proof}

By a small abuse of notation, if $x_v \colon \Spec(K_v) \rightarrow X$ and $\omega \in H^0(X,\Omega^1_X/ B_n)$ we denote by $\omega_{|x_v} \in \Omega^1_{K_v}$ the pullback of $\omega$ via $x_v$. 

\begin{lemma}
For any adelic point $(x_v) \in X(\A_K)$ and any section $\omega \in H^0(X, \Omega^1_X/B_n)$ we have $(\omega_{| x_v}) \in \Omega^1_{\A_K}/B_{n,\mathbf{A}_K}$.    
\end{lemma}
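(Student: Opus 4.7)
The plan is to use the standard spreading-out argument for adelic points together with the compatibility of the filtration $B_{n}$ with pullback via integral points. The statement is essentially a ``for almost all $v$'' claim, which is exactly the content of what it means to be in the restricted product.

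First, I would fix an integral model: choose a finite set $S$ of places of $K$ and a smooth projective model $\mathcal{X} \to \Spec(\Oo_S)$ of $X$, with $\Oo_S \subset K$ the ring of $S$-integers. By the definition of $X(\A_K)$, after possibly enlarging $S$, every local component $x_v$ with $v \notin S$ extends uniquely to an $\Oo_v$-point $\tilde{x}_v \colon \Spec(\Oo_v) \to \mathcal{X}$. Simultaneously, the section $\omega \in H^0(X, \Omega^1_X/B_{n,X})$ spreads out to a section $\tilde{\omega} \in H^0(\mathcal{X}, \Omega^1_{\mathcal{X}}/B_{n,\mathcal{X}})$ after perhaps enlarging $S$ once more; this uses that $\Omega^1$ and each $B_{n}$ are quasi-coherent and commute with localisation, so the generic section extends over some open of the base. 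Call the resulting finite set of places $S'$.

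Next, for $v \notin S'$, I would argue that $\omega_{|x_v} = \tilde{x}_v^{*}(\tilde{\omega})$ lies in $\Omega^1_{\Oo_v}/B_{n,\Oo_v}$. Pullback of a section of $\Omega^1_{\mathcal{X}}/B_{n,\mathcal{X}}$ along an $\Oo_v$-point lands in $\Omega^1_{\Oo_v}/B_{n,\Oo_v}$ by functoriality of the de Rham complex and of the Cartier operator, the latter being compatible with flat base change; here I use that $\Oo_v$ has a $p$-basis (as it is a complete discrete valuation ring with perfect residue field a finite extension of $\Ff_p$, or at worst a localization at a smooth point), so the sheaves $B_{n,\Oo_v}$ and the relation $\Omega^1_{\Oo_v} \cap B_{n, K_v} = B_{n, \Oo_v}$ from the paragraph preceding the lemma apply. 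Finally, for $v \in S'$ the component $\omega_{|x_v}$ is just an arbitrary element of $\Omega^1_{K_v}/B_{n,K_v}$, which is allowed in the restricted product.

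Putting the two together, $(\omega_{|x_v})_v$ belongs to ${\prod}'_v \Omega^1_{K_v}/B_{n,K_v}$, and by the previous lemma this restricted product is identified with $\Omega^1_{\A_K}/B_{n,\A_K}$, giving the result. The main obstacle I anticipate is simply verifying that $\tilde{\omega}$ genuinely extends as a section of $\Omega^1_{\mathcal{X}}/B_{n,\mathcal{X}}$ (rather than merely of $\Omega^1_{\mathcal{X}}$ modulo some larger subsheaf), which is resolved by choosing the affine open cover used to represent $\omega$ and spreading all of its transition data out to $\mathcal{X}$ at once; after clearing denominators in finitely many coefficients, this only excludes finitely many places.
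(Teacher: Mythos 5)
Your proposal is correct and follows essentially the same route as the paper: spread $X$ and $\omega$ out to a smooth projective model over an open of the base curve, extend each local component $x_v$ to an $\Oo_v$-point by properness, and observe that the pullback of the spread-out section lies in $\Omega^1_{\Oo_v}/B_{n,\Oo_v}$ for almost all $v$. The extra care you take in justifying that $\omega$ extends as a section of $\Omega^1_{\mathcal X}/B_{n,\mathcal X}$ is a reasonable elaboration of a step the paper leaves implicit.
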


\begin{proof}
This is a usual spread-out argument. Choose a smooth affine model $U/ \Ff_{q}$ of the function field $K$ such that $X/K$ spreads out to a smooth projective morphism $\mathcal{X} \rightarrow U$. Up to shrinking $U$, we can also assume that $\omega$ extends to a section $\tilde{\omega} \in H^0(\mathcal{X}, \Omega^1_{\mathcal{X}} /B_{n,\mathcal{X}})$. Now, for any place $v$ of $K$ in $U$, the component $x_v \colon \Spec(K_v) \rightarrow X$ extends by completeness to a map $\tilde{a}_v \colon \Spec(\Oo_{v}) \rightarrow \mathcal{X}$, from which it follows that $\omega_{| x_v} \in \Omega^1_{\Oo_{v}}/ B_{n,\Oo_{v}}$. 
\end{proof}

\begin{proof}[Proof of Theorem \ref{theorem 1 intro}]
We have $(1-C)(B_{n, \mathbf{A}_K}) \subset B_{n, \mathbf{A}_K} $ and moreover we have that
\[
\coker \big( (1-C) \colon \Omega^1_{\A_K} \rightarrow \Omega^1_{\A_K} \big) = \bigoplus_v \Omega^1_{K_v} / (1-C) \Omega^1_{K_v},
\]
where the fact that it lands in the direct sum is due to $(1-C) \Omega^1_{\Oo_{v}} = \Omega^1_{\Oo_{v}}$ which follows from the explicit computation of Section \ref{section: K_q((t))}. Hence, we have natural maps 
\[
\Omega^1_{\mathbf{A}_K} / B_{n, \mathbf{A}_K} = {\prod}'_{v} \Omega^1_{K_v} / B_{n, K_v} \rightarrow \bigoplus_v \Omega^1_{K_v} / (1-C) \Omega^1_{K_v} = \bigoplus_v \Br(K_v)[p],
\]
In fact, we also showed that the map
\[
\Omega^1_{\mathbf{A}_K} \rightarrow \bigoplus_v \Br(K_v)[p] \cong \bigoplus_v \Ff_p
\]
is given by
\[
(\omega_v) \mapsto \bigoplus_v \Tr_{\Ff(v)/\Ff_p}(\Res (\omega_v))
\]
where $\Ff(v)$ is the residue field of $v$. Now, the fact that a point $(x_v)$ is unobstructed for all the classes of the form $[t^{p^n} \omega]$ means that for every $t \in K$ we have 
\[
\sum_{v} \Tr_{\Ff(v)/\Ff_p}(\Res (t^{p^n} \omega_{| x_v})) = 0 .
\]
But $\Res(\omega) = \Res(C(\omega))^{1/p}$ and hence 
\[
\sum_{v}  \Tr( \Res (t^{p^n} \omega_{| x_v})) = \sum_{v}  \Tr(\Res (C^n(t^{p^n} \omega_{| x_v}))) = \sum_{v}  \Tr 
(\Res (tC^n(\omega_{| x_v}))).
\]
Finally, the proof of \cite{MR0670072}[Theorem 2.1.1] shows that $\omega \in \Omega^1_{\mathbf{A}_K}$ satisfies 
\[
\sum_{v} \Tr 
(\Res (t \omega)) = 0 \text{  for all  } t\in K
\]
if and only if $\omega \in \Omega^1_K$, which shows that $C^n((\omega_{| x_v}))$ is global.  
\end{proof}

Compare this with the computations in \cite{MR3062870}[Proposition 2.1], which served as an inspiration to us. We now note the following fact:
\begin{lemma}
Assume that $n=0$ and that $X$ descends to $Y/K^p$. Then
\[
X(\A_K)^p \subset X(\A_K)^{\Omega^1_X} \coloneqq \{(x_v) \colon \text{ for every } \omega \in H^0(X, \Omega^1_X) \text{ we have } \omega_{|(x_v)} \in \Omega^1_K \}.
\]
\end{lemma}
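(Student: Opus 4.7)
The plan is to exploit the fact that the relative Frobenius $F\colon Y \to X$ kills relative differentials, so that any absolute $1$-form on $X$ pulls back to a scalar form on $Y$, which then evaluates to the same element of $\Omega^1_K$ at every $K_v$-point.

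First I would unpack what $(x_v) \in X(\A_K)^p$ means: each component has the shape $x_v = F(y_v)$ for some $y_v \in Y(K_v)$. For any $\omega \in H^0(X, \Omega^1_X)$, functoriality of pullback gives $\omega|_{x_v} = (F \circ y_v)^*\omega = y_v^*(F^*\omega)$, so the whole problem reduces to controlling the single global section $F^*\omega \in H^0(Y,\Omega^1_Y)$ and its evaluation at the $y_v$.

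The key step will be to show that $F^*\omega$ lies in $\Omega^1_K$, viewed inside $H^0(Y,\Omega^1_Y)$ through the inclusion $\Omega^1_K \otimes_K \Oo_Y \hookrightarrow \Omega^1_Y$ of \eqref{sequence omega}. Locally, writing $\Oo_X = \Oo_Y \otimes_{K,F_K} K$, the relative Frobenius acts by $F^*(b \otimes 1) = b^p$, and hence $F^*(d_{X/K}(b \otimes 1)) = d_{Y/K}(b^p) = 0$. Thus the induced morphism $F^*\Omega^1_{X/K} \to \Omega^1_{Y/K}$ vanishes, and comparing the sequences \eqref{sequence omega} on $X$ and $Y$ forces $F^*\omega$ to be a section of the subsheaf $\Omega^1_K \otimes_K \Oo_Y \subset \Omega^1_Y$. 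Taking global sections and using the standing smooth projective geometrically connected hypothesis $H^0(Y,\Oo_Y) = K$, we conclude $F^*\omega \in \Omega^1_K$.

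To finish, the pullback of any $\omega_0 \in \Omega^1_K \subset H^0(Y, \Omega^1_Y)$ along a $K_v$-point $y_v$ is $\omega_0$ itself, viewed in $\Omega^1_{K_v}$ through the canonical map $\Omega^1_K \to \Omega^1_{K_v}$. Hence $\omega|_{x_v} = F^*\omega \in \Omega^1_K$ for every $v$, so $(\omega|_{x_v})_v$ is the diagonal image of $F^*\omega$ and lies in $\Omega^1_K \subset \Omega^1_{\A_K}$, exactly as demanded by the definition of $X(\A_K)^{\Omega^1_X}$. The hard part is really the global-sections step: without the hypothesis $H^0(Y,\Oo_Y) = K$ one would only obtain $\omega|_{x_v} \in \Omega^1_K \otimes_K K_v$, which is strictly weaker than membership in $\Omega^1_K$.
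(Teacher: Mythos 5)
Your proof is correct and follows essentially the same route as the paper: factor $x_v$ through the relative Frobenius $F\colon Y\to X$ and observe that $F^*\omega$ is a constant form coming from $\Omega^1_K$, so its evaluation is global and independent of $v$. The only difference is that you justify the key claim $F^*\omega\in\Omega^1_K$ (via the vanishing of $F^*\Omega^1_{X/K}\to\Omega^1_{Y/K}$, the sequence \eqref{sequence omega}, and $H^0(Y,\Oo_Y)=K$), which the paper simply asserts.
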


\begin{proof}
We have a factorization $x_v \colon \Spec(K_v) \xrightarrow{x_v^{(1)}} Y \xrightarrow{F} X$ and by functoriality for any $\omega \in H^0(X, \Omega^1_X)$ we have $\omega_{|x_{v}} = F^*(\omega)_{|x_v^{(1)} }$. But $F_{X/K}^*(\omega) \in \Omega^1_K \subset H^0(Y, \Omega^1_Y)$ and hence $F^*(\omega)_{|x_v^{(1)} } $ is global and does not depend on $x_v$. 
\end{proof}

So the best one could hope is a statement like
\begin{equation} \label{equation BM}
    X(\A_K)^{\Omega^1_X} \setminus X(\A_K)^p \subset X(K);
\end{equation}
which would imply 
\[
X(\A_K)^{\Br(X)[p]} \setminus X(\A_K)^p \subset X(K).
\]
This is in fact true for curves with very ample canonical class, as proved in the following sections. But the arguments need to be refined for higher dimensional varieties via the notion of pure points in Section \ref{sec pure points}.

\subsection{Relations to F-descends} \label{F-descent sub}
Felipe Voloch pointed out to us a connection between the result above and the results proved in \cite{MR4448590} where the authors reinterpret the Frobenius descent obstruction of constant curves in terms of differential forms. For simplicity, let $X/ \Ff$ be a smooth projective variety over a finite field, and let $A/\Ff$ be its Albanese variety. Let $f \colon X \rightarrow A$ be the morphism associated to some zero cycle of degree one. Let now $X^{(-1)}$ and $A^{(-1)}$ be the Frobenius twists of $X$ and $A$ respectively. Consider the relative Frobenius $F \colon A^{(-1)} \rightarrow A$, which is a $\ker(F)$-torsor over $A$. Fix now a global field $\Ff \subset K$ (with $\Ff$ algebraically closed in $K$). Any local point $x_v \in X(K_v)$ yields by functoriality an element $F^{-1}\{f(x_v)\} \in H^1_{\mathrm{fppf}}(x_v, \ker(F))$, and one says that the adelic point $(x_v) \in X(\A_K)$ survives the $F$-descent obstruction if $$(F^{-1}\{f(x_v)\}) \in \mathrm{im}(H^1_{\mathrm{fppf}}(K, \mathrm{ker}(F)) \rightarrow \prod_v H^1_{\mathrm{fppf}}(K_v, \ker(F))).$$ 
\begin{cor}
In the above situation assume that $H^0(X, \Omega^1_X) = H^0(A, \Omega^1_A)$. Then $$X(\A_K)^{\Br^\delta_0(X)} = X(\A_K)^{F-\mathrm{desc}}.$$
Otherwise, $X(\A_K)^{\Br^\delta_0(X)} \subsetneq X(\A_K)^{F-\mathrm{desc}}$ in general.
\end{cor}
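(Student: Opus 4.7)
The plan is to pull back differentials along the Albanese map $f \colon X \to A$ and then invoke a Cartier-duality identification of $F$-descent with the evaluation of invariant differentials on $A$, which is essentially the content of \cite{MR4448590} transferred to our setting.

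First, by functoriality, for every $\eta \in H^0(A, \Omega^1_A)$ and every local point $x_v \in X(K_v)$ one has $(f^*\eta)_{|x_v} = \eta_{|f(x_v)}$. Combined with Theorem \ref{theorem 1 intro}, this yields
$$X(\A_K)^{f^* \Br^\delta_0(A)} \;=\; f^{-1}\bigl( A(\A_K)^{\Br^\delta_0(A)} \bigr).$$
On the other hand, by its very definition the $F$-descent obstruction on $X$ is the pullback along $f$ of the $F$-descent obstruction on $A$. So the corollary reduces to proving that on the abelian variety $A$ itself the differential Brauer-Manin obstruction coincides with $F$-descent, i.e.\ $A(\A_K)^{\Br^\delta_0(A)} = A(\A_K)^{F\text{-desc}}$, together with the observation that under the equal-differentials hypothesis every class in $\Br^\delta_0(X)$ is the pullback of a class in $\Br^\delta_0(A)$, hence yields the same Brauer-Manin set.

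On the abelian variety side, $H^0(A, \Omega^1_{A/K})$ consists of invariant differentials, each of which is locally logarithmic after translation. For $\eta \in H^0(A, \Omega^1_A)$ and $a \in A(K_v)$, Proposition \ref{prop azu} expresses the local evaluation of $[\eta]$ at $a$ in $\Br(K_v)[p]$ as an explicit cyclic Azumaya algebra of the form $[fg,g)$. On the other side, the class $F^{-1}\{a\} \in H^1_{\fppf}(K_v, \ker F)$ pairs via the Weil pairing $\ker F \times \ker V_{A^\vee} \to \mu_p$ (Cartier duality) with the global class attached to $\eta$ in $H^1(K, \ker V_{A^\vee})$ to produce the same cyclic algebra. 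Summing over $v$, global triviality of the $F$-descent adelic class is then equivalent, by reciprocity, to the simultaneous vanishing of the Brauer-Manin evaluations of every class in $\Br^\delta_0(A)$, which is precisely the condition detected by Theorem \ref{theorem 1 intro}.

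Combining the two steps one always has $X(\A_K)^{\Br^\delta_0(X)} \subseteq X(\A_K)^{f^* \Br^\delta_0(A)} = X(\A_K)^{F\text{-desc}}$. Under the hypothesis $H^0(X, \Omega^1_X) = H^0(A, \Omega^1_A)$ the first inclusion is an equality, proving the first part. For the strict inequality in general, Igusa's surface (cf.\ \cite{zbMATH01584207}) provides a variety with $h^0(X, \Omega^1_{X/K}) > \dim A$; the additional differentials supply elements of $\Br^\delta_0(X)$ outside $f^*\Br^\delta_0(A)$, and an explicit computation on such an example shows these extra classes cut out strictly fewer adelic points than $F$-descent. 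The main obstacle is the precise matching in the middle step between the Brauer class produced by the local evaluation of $[\eta]$ via Proposition \ref{prop azu} and the $F$-descent pairing against the Cartier-dual class parametrised by $\eta$; once both sides are written in logarithmic coordinates at a local parameter around $a$, the identification becomes a careful bookkeeping of conventions, but writing it down uniformly for an arbitrary (not necessarily constant) abelian variety is the technical core of the argument.
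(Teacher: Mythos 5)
Your overall strategy is the same as the paper's: translate the differential Brauer--Manin condition via Theorem \ref{theorem 1 intro} into the statement that every $\omega \in H^0(X,\Omega^1_X)$ evaluates to a global form at $(x_v)$, identify the $F$-descent condition with the same statement for the pullbacks $f^*\eta$, $\eta \in H^0(A,\Omega^1_A)$, and compare the two spaces of forms. The paper disposes of the crucial identification in one line by citing \cite{MR4448590}[Lemma 3.2], which says exactly that $X(\A_K)^{F\text{-desc}}$ is the set of adelic points at which all invariant differentials of $A$ pull back to elements of $\Omega^1_K$. You instead propose to re-derive this via a Cartier-duality pairing of $F^{-1}\{a\}$ against a class in $H^1(K,\ker V_{A^\vee})$ attached to $\eta$, matched with the Azumaya description of Proposition \ref{prop azu} and closed by reciprocity. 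But you never carry out that matching: you state yourself that it ``is the technical core of the argument'' and reduce it to ``careful bookkeeping of conventions.'' That is precisely the step that constitutes the proof, so as written the proposal has a genuine gap at its center.

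There is also a concrete problem with the route you sketch. You assert that every invariant differential on $A$ is ``locally logarithmic after translation,'' and the Cartier-duality pairing with $\ker V_{A^\vee}$ implicitly assumes each $\eta$ corresponds to a $\mu_p$- or $\alpha_p$-type class. This is false in general: the Cartier operator acts on $H^0(A,\Omega^1_A)$ and only its fixed forms are logarithmic; for a supersingular $A$ the operator is nilpotent on invariant forms, so no nonzero invariant form is logarithmic. Hence the pairing as you describe it does not apply to an arbitrary $\eta \in H^0(A,\Omega^1_A)$, and the middle step would need to be reorganised (this is why the paper leans on the residue-theoretic formulation of \cite{MR4448590} rather than on a class-by-class duality computation). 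The reduction to $A$ in your first step and the discussion of strictness via Igusa's example are fine and agree with the paper, the latter being asserted only ``in general'' in both treatments.
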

\begin{proof}
The proof of \cite{MR4448590}[Lemma 3.2] shows that 
$$X(\A_K)^{F-\mathrm{desc}} = \{(x_v) \in X(\A_K) \colon f((x_v))_{|\omega} \in \Omega^1_K \text{ for every } \omega \in H^0(A, \Omega^1_A) \}.$$
\end{proof}
On the other hand, it can happen that the map $H^0(A, \Omega^1_A) \rightarrow H^0(X, \Omega^1_X)$, which is always injective due to the result of Igusa \cite{zbMATH03114814}, may not be surjective. In fact, Igusa gave the first example of a smooth surface $X$ such that $H^0(X, \Omega^1_X)$ is two dimensional but its Albanese variety is an elliptic curve in \cite{zbMATH03114813}. The literature contains many more such examples, e.g., Raynaud surfaces. 
\subsection{The evaluation map} \label{sec evaluation map}
We use the explicit description in Theorem \ref{theorem 1 intro} to study the evaluation map coming from classes of $\Br^\delta_n$. We begin with the case $n=0$ as usual. 
Recall that if $U \subset K_v^n$ is a $v$-adic open subset, a continuous function $ f \colon U \rightarrow K_v$ is analytic if it can be expressed around every point $u \in U$ as a converging power series with coefficients in $K_v$. Similarly, we say that a function $f \colon U \rightarrow K_v^m$ is analytic if all its coordinates are analytic. Using this, one can define analytic varieties in the usual way (see, e.g., \cite{MR179170} and \cite[Part II]{MR1176100}). Also, note that in positive characteristic, non-separability phenomena lead as usual to certain pathologies: for example, the function $K_v \rightarrow K_v$ sending $x \mapsto x^p$ is clearly analytic, but it cannot be open. Let us now look at another phenomenon that is more important to us:
\begin{lemma}
The map $d \colon K_v \rightarrow \Omega^1_{K_v} \cong K_v$ is not analytic.
\end{lemma}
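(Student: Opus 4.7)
The plan is to combine Corollary \ref{cor: d = 0 iff p power} (which gives $\ker(d) = K_v^p$) with a non-archimedean identity-principle argument. First I would fix a uniformizer $t$ so that $K_v \cong \Ff_q((t))$, and use the trivialization $\Omega^1_{K_v} = K_v \cdot dt \cong K_v$; this realizes $d$ as the formal derivative on Laurent series.

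The crucial observation is that $K_v^p = \ker(d)$ accumulates at $0$: indeed $t^{pn} \in K_v^p$ for every $n \geq 1$ and $t^{pn} \to 0$ in the $v$-adic topology. Assuming for contradiction that $d$ is analytic in a neighborhood of $0$, since $d(0) = 0$ I would obtain a local expansion
\[
d(x) = \sum_{n \geq 1} a_n x^n, \qquad a_n \in K_v,
\]
converging on some open disk $D_r = \{ x \in K_v \colon |x|_v < r \}$. For $n$ large enough, $x_n := t^{pn}$ lies in $D_r$ and satisfies $d(x_n) = 0$, so $\sum_{k \geq 1} a_k x_n^k = 0$.

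Next I would invoke the standard ultrametric identity argument to conclude that every $a_k$ vanishes. If $k_0$ denotes the minimal $k$ with $a_{k_0} \neq 0$, convergence on $D_r$ forces $|a_k|_v \leq C r^{-k}$ for some constant $C$, so for $|x_n|_v$ small enough the term $a_{k_0} x_n^{k_0}$ strictly dominates all higher ones in absolute value; the non-archimedean triangle inequality then gives $|d(x_n)|_v = |a_{k_0}|_v |x_n|_v^{k_0} \neq 0$, contradicting $d(x_n) = 0$. Hence $d \equiv 0$ on $D_r$. But a direct computation gives $d(t^{p+1}) = (p+1) t^p = t^p$, which is nonzero, and $t^{p+1} \in D_r$ as soon as $|t|_v^{p+1} < r$; this yields the desired contradiction.

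The only step that really requires attention is the ultrametric identity argument, which is routine once the local power-series expansion is available; everything else is bookkeeping around the identification $\Omega^1_{K_v} \cong K_v \cdot dt$ and the kernel description $\ker(d) = K_v^p$.
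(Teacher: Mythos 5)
Your argument is correct in substance but follows a genuinely different route from the paper. The paper's proof is a one-line structural argument: $d$ is additive and $K_v^p$-linear, and an analytic map with these properties must have an additive power series expansion $\sum a_i x^{p^i}$ whose $K_v^p$-linearity kills every term except the linear one, forcing $d$ to be $K_v$-linear --- which it is not. You instead exploit the kernel description $\ker(d)=K_v^p$ from Corollary \ref{cor: d = 0 iff p power}: the zeros $t^{pn}$ accumulate at $0$, and a nonzero convergent power series on a non-archimedean disk has isolated zeros (your dominant-term argument is a correct proof of this Strassmann-type fact), so an analytic $d$ would vanish identically near $0$. Your approach is more elementary --- it needs no classification of additive analytic functions, only the identity principle --- at the cost of being longer. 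One small slip at the end: the radius $r$ is imposed on you by the assumed analyticity, so you cannot guarantee $|t|_v^{p+1}<r$; the fix is immediate, since for any $r$ you may choose $N$ large and prime to $p$ with $|t|_v^N<r$, and then $d(t^N)=Nt^{N-1}\,dt\neq 0$ while $t^N\in D_r$. With that adjustment the proof is complete.
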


\begin{proof}
In fact, since $d$ is $K_v^{p}$-linear, a simple computation shows that if it were analytic, it would also be linear, which is a contradiction. 
\end{proof}
On the other hand, we can consider both $K_v$ and $\Omega^1_v$ as $K_v^p$-vector spaces in a natural way. So now both have dimension $p$, and the map $d$ becomes analytic simply because it is linear:
\begin{defi}
Let $X_n$ be the Weil-restriction of scalars of $X$ to $K^{p^n}_v \subset K_v$, which has dimension $p^n \cdot \dim(X)$ and comes with a natural homeomorphism $X_n(K^{p^n}_v) \cong X(K_v)$. For an open $ U \subset X(K_v)$ we say that a continuous function $U \rightarrow K_v$ is $p^n$-analytic if around every point $w \in U $ it is analytic when considered as a function on $ X_n$.
\end{defi}
\begin{prop} \label{main prop evaluation map}
Let $X/K$ be a smooth variety. For any $\omega \in H^0(X, \Omega^1_X)$, the evaluation map 
   \begin{align*}
     \mathrm{ev}_{\omega} \colon  X(K_v) &\rightarrow K_v \\
       x &\mapsto \omega_{|x}
   \end{align*} is $p$-analytic.  If moreover the image of $\omega$ in  $H^0(X, \Omega^1_{X/K})$ is non trivial, then $\mathrm{ev}_{\omega}$ is not constant around every point of $X$.
\end{prop}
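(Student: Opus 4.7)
For the first assertion, I plan to work Zariski-locally on $X$. On an affine chart $U = \Spec(A)$, every section of $\Omega^1_U$ is a finite $A$-linear combination $\sum_j h_j\, d a_j$ with $h_j, a_j \in A$, so the evaluation at $x \in U(K_v)$ reads
\[
\mathrm{ev}_\omega(x) = \sum_j h_j(x)\, d\!\left(a_j(x)\right) \in \Omega^1_{K_v}.
\]
This exhibits $\mathrm{ev}_\omega$ as a composition of three $p$-analytic ingredients: the polynomial evaluations $x \mapsto h_j(x)$ and $x \mapsto a_j(x)$, which are $K_v$-polynomial and hence \emph{a fortiori} $K_v^p$-polynomial on $X_1(K_v^p) = X(K_v)$; the derivation $d \colon K_v \to \Omega^1_{K_v}$, which is $K_v^p$-linear because it kills every $p$-th power; and multiplication in $K_v$. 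The composite is therefore $p$-analytic on $X(K_v)$.

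For the second assertion, suppose $\bar\omega \in H^0(X, \Omega^1_{X/K})$ is nonzero. Its non-vanishing locus $V \subset X$ is Zariski open and dense, and by smoothness of $X$ together with the $v$-adic implicit function theorem $V(K_v)$ is $v$-adically open and dense in $X(K_v)$; it therefore suffices to prove non-constancy of $\mathrm{ev}_\omega$ in a neighborhood of an arbitrary $x_0 \in V(K_v)$. I would choose étale coordinates $x_1, \ldots, x_n$ on $X/K$ at $x_0$ with $x_i(x_0) = 0$, fix a local splitting of the exact sequence $0 \to \Omega^1_K \otimes_K \Oo_X \to \Omega^1_X \to \Omega^1_{X/K} \to 0$, and write $\omega = \sum_i f_i\, dx_i + \eta$ with $\eta$ locally a section of $\Omega^1_K \otimes \Oo_X$. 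By hypothesis some $f_{i_0}(x_0) \neq 0$; relabel $i_0 = 1$. Along the path $\gamma \colon K_v \to X(K_v)$ defined in coordinates by $x_1 \circ \gamma(c) = c$ and $x_i \circ \gamma(c) = 0$ for $i \geq 2$ we have
\[
\mathrm{ev}_\omega(\gamma(c)) = f_1(\gamma(c))\, dc + \eta|_{\gamma(c)},
\]
whose $K_v^p$-linear differential at $c = 0$ takes the shape $\delta c \mapsto f_1(x_0)\, d(\delta c) + \delta c \cdot \alpha$ for a specific $\alpha \in \Omega^1_{K_v}$ extracted from $\eta$. If $\alpha = 0$, this differential equals $f_1(x_0) \cdot d$, a nonzero $K_v^p$-linear map, since $f_1(x_0) \neq 0$ and $d \colon K_v \to \Omega^1_{K_v}$ does not vanish. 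If $\alpha \neq 0$, testing on $\delta c = c_0^p$ for any $0 \neq c_0 \in K_v$ gives $d(\delta c) = 0$ but $\delta c \cdot \alpha = c_0^p \cdot \alpha \neq 0$. Either way the differential at $0$ is nonzero, so $\mathrm{ev}_\omega \circ \gamma$ is non-constant in every $v$-adic neighborhood of $0$, and hence $\mathrm{ev}_\omega$ is non-constant around $x_0$.

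The main obstacle I anticipate is making the identification $X(K_v) = X_1(K_v^p)$ operational enough to verify that $K_v$-polynomial expressions in the coordinates of $X$ truly become $K_v^p$-polynomial (hence analytic) on the Weil restriction $X_1$; this is essentially a bookkeeping exercise in a $p$-basis of $K_v$, but it must be spelled out explicitly since the failure of $d$ to be $K_v$-linear is the whole reason the map is not $K_v$-analytic. A secondary care point in the second part is the density of $V(K_v)$ in $X(K_v)$, which relies on smoothness of $X$ and the non-vanishing of $\bar\omega$.
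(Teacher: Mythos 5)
Your proof is correct. The first part coincides with the paper's argument: both reduce to an \'etale chart, observe that the evaluation map is a composite of genuinely analytic maps with the $K_v^p$-linear derivation $d\colon K_v\to\Omega^1_{K_v}$, and conclude $p$-analyticity; your closing remark about bookkeeping in the $p$-basis is exactly the content of the paper's identification $du=u'\,dt$. For the second part you take a genuinely different route. The paper inducts on $\dim X$, cutting with hyperplanes down to the case $d=1$, and there argues by contradiction from the functional equation $f_1(u+\epsilon)u'+f_1(u+\epsilon)\epsilon'=f_1(u)u'$, splitting into the cases $u\in K_v^p$ and $u'\neq 0$. You instead restrict to the non-vanishing locus $V$ of the image of $\omega$ in $\Omega^1_{X/K}$, pick a path $\gamma$ in a coordinate direction where the fibre of $\bar\omega$ is nonzero, and compute that the $K_v^p$-linear differential of $\mathrm{ev}_\omega\circ\gamma$ at the point is $\delta c\mapsto f_1(x_0)\,d(\delta c)+\delta c\cdot\alpha$, which is nonzero in either case of your dichotomy on $\alpha$ (testing on $\delta c=c_0^p$ when $\alpha\neq0$ is the right move). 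This buys you two things: you avoid the dimension induction entirely, and you track the contribution of the horizontal part $\eta$ explicitly through $\alpha$ rather than suppressing it (the paper's step ``then necessarily $f_0$ is constant'' is glossed over, and your $\alpha$-dichotomy is precisely what makes that interaction harmless). The price is that your argument only applies at points of $V(K_v)$, so you need the auxiliary fact that $V(K_v)$ is dense in $X(K_v)$ — i.e.\ that the $K_v$-points of a proper closed subvariety of a smooth (irreducible) variety are nowhere dense. You assert this via the implicit function theorem; it is standard, but since it is the one external input your route depends on and the paper's route does not, it deserves a sentence of proof (locally in an \'etale chart, $X\setminus V$ lies in the zero set of a nonzero convergent power series, which has empty interior). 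With that supplied, your argument is complete.
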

\begin{proof}
Since $X$ is smooth over $K$, it is etale-locally isomorphic to $\mathbb{A}^d_K$, where $d = \dim(X)$. Via standard reductions, it is then enough to prove the statement when $X$ is a Zariski open subset $X \subset \mathbb{A}^d_K$ and $\omega$ is any differential form $\omega = f_0(x_1, \cdots, x_d) dt + \sum_{i=1}^d f_i(x_1, \cdots, x_d) dx_i$ where each $f_i \in K_v[[x_1, \cdots, x_d]]$ converges over $U$ and we identify $K_v \cong \Ff((t))$. Now, for any point $u = (u_1, \cdots, u_d) \in U(K_v)$ the evaluation map is simply 
$$\mathrm{ev}_\omega(h) =  f_0(u_1, \cdots, u_d) dt + \sum_{i=1}^d f_i(u_1, \cdots, u_d) u_i' dt;$$
but $u \mapsto u_i'$ is $p$-analytic by the previous considerations, and since composition of power series is clearly analytic when defined, the first statement follows. 

We now prove the second part of the statement by induction on $d$. So assume $d=1$ and assume that $\mathrm{ev}_\omega$ is constant over some open $V$ with $u \in V \subset U(K_v)$. Then necessarily $f_0$ is constant. But $f_1 \neq 0$ due to the assumption. Let $B \subset K_v$ be a small enough ball such that for every $\epsilon \in B$ we have $u + \epsilon \in V$. Then we compute 
$$0 = \mathrm{ev}_\omega(u + \epsilon) - \mathrm{ev}_\omega(u) = f_1(u + \epsilon) u'dt  +  f_1(u + \epsilon) \epsilon' dt - f_1(u) u' dt  $$
and so 
$$f_1(u + \epsilon)u' + f_1(u + \epsilon) \epsilon' =  f_1(u)u'.$$
Now, if $u$ is a $p$-power the equation above reads $f_1(u + \epsilon) \epsilon' = 0$ for every $\epsilon \in B$, which is absurd if $f_1 \neq 0$. If $u' \neq 0$ plug $\epsilon = \epsilon^p$ in the equation above and obtain $f_1(u + \epsilon^p)u' =  f_1(u)u'$ which implies that $f_1 = c$ is constant around $u$. But then for a general $\epsilon \in B$ the original equation yields $c + (c/u') \epsilon' =  f_1(u)u'$
which is again absurd undless $c = 0$, contradiction. For $d>1$, we can cut $U$ with an hyperplane $H$ such that $\omega_{| H}$ also satisfies the assumption, and then conclude using the induction step.
\end{proof}
So, the level sets of $\mathrm{ev}_\omega$ are proper $p$-analytic subvarieties in this case:
\begin{cor}
If the map $H^0(X, \Omega^1_X) \rightarrow H^0(X, \Omega^1_{X/K})$ is non-zero then the Brauer-Manin set $X(\A_K)^{\Br(X)[p]}$ is contained in a countable union of closed subsets of the form $\prod_v W_v$ where each $W_v \subset X(K_v)$ is a proper $p$-analytic subvariety. If moreover $\mathrm{ev}_\omega \colon X(\A_K)^{\Br(X)[p]} \rightarrow \Omega^1_K$ has finite image, for some non-constant $\omega$, the subset $X(K) \subset X(K_v)$ is not dense in the $v$-adic topology for every place $v$. 
\end{cor}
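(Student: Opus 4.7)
The plan is to combine Theorem \ref{theorem 1 intro} (with $n=0$) with Proposition \ref{main prop evaluation map}, after stratifying the Brauer-Manin set by the value of the evaluation map, which takes values in the \emph{countable} set $\Omega^1_K$. Fix a section $\omega \in H^0(X, \Omega^1_X)$ whose image in $H^0(X, \Omega^1_{X/K})$ is non-zero; this exists by hypothesis. Applying Theorem \ref{theorem 1 intro} (with $n=0$, so $C^n$ is the identity) to the family of Brauer classes $\{[t\omega]\}_{t \in K}$ shows that any $(x_v) \in X(\A_K)^{\Br(X)[p]}$ satisfies $(\omega_{|x_v})_v \in \Omega^1_K \subset \Omega^1_{\A_K}$. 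In other words, the adelic evaluation map $\mathrm{ev}_\omega$ sends the Brauer-Manin set into $\Omega^1_K$.

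Since $K$ is a global function field it is countable, and hence so is $\Omega^1_K$. Writing $\mathrm{ev}_{\omega,v} \colon X(K_v) \to \Omega^1_{K_v}$ for the local evaluation maps, this gives the decomposition
\[
X(\A_K)^{\Br(X)[p]} \;\subset\; \bigcup_{\eta \in \Omega^1_K} \prod_v \mathrm{ev}_{\omega,v}^{-1}(\eta),
\]
which is a \emph{countable} union of products. By Proposition \ref{main prop evaluation map}, each $\mathrm{ev}_{\omega,v}$ is $p$-analytic and, thanks to our choice of $\omega$, is not locally constant at any point of $X(K_v)$. Hence each level set $W_v := \mathrm{ev}_{\omega,v}^{-1}(\eta)$ is closed (by continuity) and is a proper analytic subvariety of the Weil restriction $X_1(K_v^{p}) \cong X(K_v)$; the standard fact that the vanishing locus of a non-identically-zero analytic function on a smooth connected analytic manifold has empty interior and (Haar-)measure zero then gives the remaining properties of $W_v$.

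For the second statement, assume the global evaluation map $\mathrm{ev}_\omega \colon X(\A_K)^{\Br(X)[p]} \to \Omega^1_K$ has finite image $\{\eta_1, \dots, \eta_m\}$. Since $\Br(X)[p] \subset \Br(X)$, the Brauer-Manin reciprocity gives $X(K) \subset X(\A_K)^{\Br(X)} \subset X(\A_K)^{\Br(X)[p]}$. Projecting to a given place $v$, the image of $X(K)$ in $X(K_v)$ is contained in the finite union $\bigcup_{i=1}^m \mathrm{ev}_{\omega,v}^{-1}(\eta_i)$, a finite union of proper $p$-analytic subvarieties, which is nowhere dense in the $v$-adic topology; so $X(K)$ cannot be $v$-adically dense in $X(K_v)$.

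The main technical point is justifying that the level sets $\mathrm{ev}_{\omega,v}^{-1}(\eta)$ really are proper analytic subvarieties of measure zero. This is essentially routine once one passes, as in Section \ref{sec evaluation map}, to the Weil restriction $X_1$ in order to linearize the derivation $d$ and convert $\mathrm{ev}_{\omega,v}$ into a genuine analytic function; the non-local-constancy granted by Proposition \ref{main prop evaluation map} then feeds into the standard statement that the zero locus of a nonzero analytic function on a smooth analytic manifold over a non-archimedean local field is closed, nowhere dense, and of measure zero.
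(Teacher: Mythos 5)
Your proposal is correct and follows exactly the route the paper intends: the paper's proof is the one-line remark that the corollary ``follows immediately from Proposition \ref{main prop evaluation map} and Theorem \ref{theorem 1 intro}'', and your write-up simply spells out that argument — Theorem \ref{theorem 1 intro} with $n=0$ forces $\mathrm{ev}_\omega$ to land in the countable set $\Omega^1_K$, and Proposition \ref{main prop evaluation map} makes each local level set a closed, nowhere dense, measure-zero $p$-analytic subvariety. No gaps; the details you flag as routine (measure zero of level sets of a nowhere-locally-constant analytic map, and the reciprocity inclusion $X(K)\subset X(\A_K)^{\Br(X)[p]}$) are indeed the standard facts being invoked.
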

\begin{proof}
    This follows immediately from Proposition \ref{main prop evaluation map} and Theorem \ref{theorem 1 intro}. 
\end{proof}
We now prove analogue results for sections of $H^0(X, \Omega^1_{X}/ B_{n, X})$.
\begin{lemma}
The space $B_{n,K_v} \subset \Omega^1_{K_v}$ has $p^{n}$-analytic dimension $p^n-1  = \dim_{K_v^{p^n}} \Omega^1_{K_v} -1$. 
\end{lemma}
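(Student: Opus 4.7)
The plan is a direct coordinate computation using $K_v \cong \Ff(v)((t))$, which has $p$-basis $\{t\}$. Then $\Omega^1_{K_v} = K_v\, dt$, and since $\Ff(v)$ is perfect, $K_v$ admits $\{1, t, \ldots, t^{p^n-1}\}$ as a $K_v^{p^n}$-basis. Consequently $\Omega^1_{K_v}$ has $K_v^{p^n}$-basis $\{t^i\, dt : 0 \le i \le p^n-1\}$, confirming the ambient dimension $\dim_{K_v^{p^n}}\Omega^1_{K_v} = p^n$. My goal is to identify $B_{n,K_v}$ as the $K_v^{p^n}$-linear hyperplane spanned by $\{t^i\, dt : 0 \le i \le p^n-2\}$; as a $K_v^{p^n}$-linear subvariety of the Weil restriction of $\Omega^1_{K_v} \cong \mathbb{A}^1_{K_v}$ to $K_v^{p^n}$, this is automatically $p^n$-analytic of dimension $p^n-1$.

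First I would evaluate the Cartier operator on the monomials $t^k\, dt$ for $k \in \Z$. The idea is that for $k \ne -1$ with $k+1 \not\equiv 0 \pmod p$ one has $t^k dt = d(t^{k+1}/(k+1)) \in B^1$, hence $C(t^k dt) = 0$; while for $k = p(m+1)-1$ the factorisation $t^k dt = (t^m)^p \cdot t^{p-1} dt$, the semi-linearity $C(a^p\omega) = aC(\omega)$, and the defining relation $C(t^{p-1} dt) = dt$ force $C(t^k dt) = t^m dt$. Iterating this computation $n$ times yields that $C^n(t^k dt) = t^m dt$ when $k = p^n(m+1) - 1$ and zero otherwise; in particular, among the basis vectors $\{t^i dt : 0 \le i \le p^n-1\}$, only $t^{p^n-1} dt$ has non-zero image under $C^n$.

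Next, for a general form I would expand $\omega = \sum_{i=0}^{p^n-1} c_i t^i dt$ with $c_i \in K_v^{p^n}$, and note that each $c_i$ is a Laurent series in $t^{p^n}$, so the exponents of $t$ appearing in $c_i t^i dt$ are all congruent to $i$ modulo $p^n$. By the monomial formula of the previous step, only the slice $i = p^n-1$ can contribute to $C^n\omega$, and the semi-linearity $C^n(a^{p^n}\omega) = aC^n(\omega)$ gives $C^n\omega = c_{p^n-1}^{1/p^n}\, dt$. Consequently $\omega \in B_{n,K_v} = \ker(C^n)$ if and only if $c_{p^n-1} = 0$, which is precisely the desired description.

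There is no genuine obstacle beyond keeping the module structures straight: the semi-linearity $C^n(a^{p^n}\omega) = aC^n(\omega)$ is exactly what makes $B_{n,K_v}$ a $K_v^{p^n}$-linear (rather than $K_v$-linear or merely $\Ff_p$-linear) hyperplane, so the notion of ``$p^n$-analytic dimension'' introduced in the previous subsection is the correct one, and coincides with the $K_v^{p^n}$-vector space dimension $p^n-1$.
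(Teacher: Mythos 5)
Your proof is correct, and it reaches the paper's conclusion by the dual route. The paper works with the \emph{image} description of $B_{n,K_v}$: it invokes the Cartier--Serre decomposition to write every $\omega$ uniquely as $dF_1 + F_2^{p-1}dF_2 + \cdots + F_n^{p^{n-1}-1}dF_n + r^{p^n}t^{p^n}\mathrm{dlog}(t)$, observes that the first block lies in $B_{n,K_v}$, and notes that the projection onto the residual term is $K_v^{p^n}$-linear with one-dimensional image. You instead use the \emph{kernel} description $B_{n,K_v} = \ker(C^n)$ (valid here since $\Omega^2_{K_v}=0$, so $Z^1 = \Omega^1$ and $C$ is everywhere defined), compute $C^n$ on the monomial basis $\{t^i\,dt\}_{0\le i\le p^n-1}$ over $K_v^{p^n}$, and read off that the kernel is the coordinate hyperplane $c_{p^n-1}=0$. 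The two complements agree, since $r^{p^n}t^{p^n}\mathrm{dlog}(t) = r^{p^n}t^{p^n-1}\,dt$ is exactly the $K_v^{p^n}$-line spanned by $t^{p^n-1}\,dt$. What your version buys is self-containedness: you verify the splitting by direct computation rather than leaning on the uniqueness of the Witt-vector decomposition, and the semilinearity $C^n(a^{p^n}\omega)=aC^n(\omega)$ makes the termwise reduction rigorous without any continuity argument. The paper's version is shorter because it reuses the $D_n$ machinery already set up in Section 2. No gaps.
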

\begin{proof}
Identify $K_v \cong \Ff((t))$ and note that any $\omega \in \Omega^1_{K_v}$ can be decomposed uniquely as $$\omega = dF_1 + F^{p-1}_2 dF_2 +  \cdots + F^{p^{n-1}-1}_n dF_n + r^{p^{n}} t^{p^{n}} \mathrm{dlog}(t)$$
where $ dF_1 + F^{p-1}_2 dF_2 +  \cdots + F^{p^{n-1}-1}_n dF_n  \in B_{n, K_v}$. So we have a direct sum decomposition $$\Omega^1_{K_v} \cong B_{n, K_v} \oplus \{ r^{p^{n}} t^{p^{n}} \mathrm{dlog}(t) \colon r \in K_v \}. $$ The projection $\pi \colon \Omega^1_{K_v} \rightarrow \Omega^1_{K_v}$ sending $\omega \mapsto r^{p^{n}} t^{p^{n}} \mathrm{dlog}(t)$ is then $K_v^{p^n}$-linear and has one dimensional image. The identifications $B_n \cong \ker(\pi)$ proves the lemma.
\end{proof}
\begin{lemma}
Let $K$ be a field $f(x) \in K^{p^n}((x))$. Then, we can write $\omega = f(x) dx \in \Omega^1_{K((x))}$ as
$$\omega = d(f_1) + f_2^{p-1}d(f_2) \cdots f_n^{p^{n-1}-1}d(f_n) + h^{p^n}(x) x^{p^n}\mathrm{dlog}(x)$$
with $f_i,h \in K_v((x))$. 
\end{lemma}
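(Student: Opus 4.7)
The plan is to separate $\omega = f(x)\,dx$ into a ``logarithmic'' piece, corresponding to exponents congruent to $-1 \pmod{p^n}$, and a ``residual'' piece which lies in $B_{n, K((x))}$, and then to invoke the Cartier--Serre theorem of Section \ref{first section} to express the latter via a Witt vector. The hypothesis $f \in K^{p^n}((x))$ enters precisely because it allows each coefficient to be written as a $p^n$-th power, which is needed both for the logarithmic term and for the iterated Cartier computation.

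Concretely, I would write $f(x) = \sum_i a_i x^i$ with $a_i \in K^{p^n}$, and split $f = f_{\mathrm{rest}} + f_{\mathrm{log}}$, where $f_{\mathrm{log}}(x) = \sum_k a_{p^n k - 1}\, x^{p^n k - 1}$ collects the terms with $p^n \mid (i+1)$ and $f_{\mathrm{rest}}$ collects the rest. Writing $a_{p^n k - 1} = c_k^{p^n}$ with $c_k \in K$ and putting $h(x) \coloneqq \sum_k c_k\, x^{k-1} \in K((x))$ (only finitely many negative indices appear, since $f$ is a Laurent series), a direct expansion yields
\[
h(x)^{p^n}\, x^{p^n}\, \mathrm{dlog}(x) \;=\; \sum_k c_k^{p^n}\, x^{p^n k - 1}\, dx \;=\; f_{\mathrm{log}}(x)\, dx,
\]
which accounts for the last summand in the claimed decomposition.

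For the residual piece $\omega_{\mathrm{rest}} \coloneqq f_{\mathrm{rest}}(x)\, dx$ I would next show that $\omega_{\mathrm{rest}} \in B_{n, K((x))}$. By the elementary computations $C(x^i\, dx) = 0$ when $p \nmid (i+1)$ and $C(x^{pk - 1}\, dx) = x^{k-1}\, dx$ otherwise, combined with the semilinearity $C(b^p \eta) = b\, C(\eta)$ for closed $\eta$ and the inclusion $K^{p^n} \subset K^p$, one obtains by induction that $C^n(a_i\, x^i\, dx) = 0$ for every term of $f_{\mathrm{rest}}$; hence $C^n(\omega_{\mathrm{rest}}) = 0$. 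Since $K((x))$ is a field of characteristic $p$ it admits a $p$-basis, so the Cartier--Serre theorem from Section \ref{first section} supplies a Witt vector $(g_0, \ldots, g_{n-1}) \in W_n(K((x)))$ with
\[
\omega_{\mathrm{rest}} \;=\; D_n(g_0, \ldots, g_{n-1}) \;=\; dg_{n-1} + g_{n-2}^{p-1}\, dg_{n-2} + \cdots + g_0^{p^{n-1}-1}\, dg_0,
\]
and relabelling $f_j \coloneqq g_{n-j}$ for $j = 1,\dots,n$ yields exactly the expression in the statement.

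The main thing to verify carefully is the iterated Cartier computation on $f_{\mathrm{rest}}$: the semilinearity of $C$ forces us to extract successive $p$-th roots of the coefficients, which is exactly where the hypothesis $f \in K^{p^n}((x))$ is essential (without it, the Witt vector produced by Cartier--Serre would a priori only live over a purely inseparable extension of $K$). I expect no further obstacle, the rest being Witt-vector bookkeeping.
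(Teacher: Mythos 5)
Your proof is correct, and it is organized differently from the one in the paper. The paper argues by induction on $n$ directly on the statement: it applies $C$ once to $\omega$, invokes the inductive hypothesis on $C(\omega)$, lifts each term of that decomposition through $C$ using $C(a^{p-1}da)=da$ and the $p^{-1}$-semilinearity, and absorbs the discrepancy $\omega-\omega'\in\ker C$ into the exact term $d(f_1)$; the base case $n=1$ is the same coefficient splitting you perform. You instead do the exponent splitting once at level $n$, check by a monomial-by-monomial iteration of $C$ that the residual piece lies in $B_{n,K((x))}$, and then invoke the Cartier--Serre theorem (image of $D_n$ equals $B_n$) from Section 2.1 as a black box to produce the Witt-vector expression. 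Since the paper's proof of that surjectivity statement is itself an induction of exactly the same shape, the two arguments are close cousins, but yours modularizes the work onto an already-established result and makes more transparent both where the hypothesis $f\in K^{p^n}((x))$ enters (extracting the $p^n$-th roots $c_k$, and keeping the coefficients inside $K^{p^{n-i}}$ at each stage of the $C^n$ computation) and the fact that the class of $f(x)\,dx$ modulo $B_{n,K((x))}$ is exactly the logarithmic term, which is the point actually used in Proposition 3.13. Two small points worth making explicit: the form $f(x)\,dx$ is closed in the absolute de Rham complex precisely because the coefficients lie in $K^{p}$ (so $da_i=0$), which is needed before $C$ can be applied at all; and $K((x))$ has a $p$-basis (as does any field of characteristic $p$), so the Cartier--Serre theorem applies. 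Neither causes any difficulty.
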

\begin{proof}
We prove this by induction on $n$. If $n=1$ then we decompose $\omega$ as $$ (\sum_{ i \not \equiv_p -1  } a_i^p x^i + \sum_{i} b_i^p x^{pi-1}) dx = (\sum_{ i \not \equiv_p -1  } a_i^p x^i + x^{-1}(\sum_{i} b_i x^{i})^p) dx = d(G(x)) + h(x)^p \mathrm{dlog}(x)   $$
for $a_i, b_i \in K_v$, where the last equality is only possible because $f(x) \in K_v^p((x)).$ Now for $n \geq 2$ note that $d(\omega) = 0$ so we can apply $C$ to $\omega$. It is easy to see that $C(\omega)$ satisfies the inductive step; so we write $$C(\omega) = d(f_1) + f_2^{p-1}d(f_2) \cdots f_{n-1}^{p^{n-2}-1}d(f_n) + h^{p^{n-1}}(x) x^{p^{n-1}}\mathrm{dlog}(x) $$
and define $$\omega' \coloneqq f_1^{p-1}d(f_1) + f_2^{p^2-1}d(f_2) \cdots f_{n-1}^{p^{n-1}-1}d(f_n) + h^{p^{n}}(x) x^{p^{n}}\mathrm{dlog}(x) $$
then $C(\omega') = C(\omega)$ and therefore $\omega - \omega' = d(f_0)$ and the result is proved. 
\end{proof}
We finally have:
\begin{prop} \label{main prop evaluation map 2}
Let $n \geq 1$ and fix $\sigma \in H^0(X, \Omega^1_{X}/ B_{n, X})$. Then the map $\mathrm{ev}_\sigma \colon X(K_v) \rightarrow \Omega^1_{K_v}/B_{n, K_v}$ is $p^{n}$-analytic. If moreover $X$ descends to $Y/K^{p^n}$ and $\sigma \neq 0$ is the $F_K^{n}$-pullback of some element in $ H^0(Y, \Omega^1_{Y/K}/ B_{n, Y/K})$, then the evaluation map is not constant around any point of $x \in X(K_v)$. 
\end{prop}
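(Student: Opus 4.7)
I would mirror the strategy of Proposition \ref{main prop evaluation map}, adapted to accommodate the quotient $\Omega^1_{K_v}/B_{n,K_v}$.

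For the $p^n$-analyticity, \'etale-locally one reduces to $X$ being a Zariski open of $\mathbb{A}^d_K$, with $\sigma$ lifted Zariski-locally to an absolute form $\omega = f_0\, dt + \sum_{i=1}^d f_i\, dx_i$ where $f_i \in K_v[[x_1, \ldots, x_d]]$ converge on the chart. The evaluation $u = (u_1, \ldots, u_d) \mapsto \omega|_u$ decomposes as evaluating power series (classically analytic) and applying the derivative $d\colon K_v \to \Omega^1_{K_v}$. The latter is $K_v^p$-linear, hence $K_v^{p^n}$-linear, so it is linear — in particular $p^n$-analytic — when both sides are viewed as $K_v^{p^n}$-vector spaces. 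By the first lemma of this subsection, $B_{n,K_v}$ is a $K_v^{p^n}$-linear subspace of $\Omega^1_{K_v}$, so passing to the quotient preserves $p^n$-analyticity.

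For the non-constancy, I proceed by contradiction: suppose $\mathrm{ev}_\sigma$ is constant on a $v$-adic neighborhood $V$ of some $x_0 \in X(K_v)$. Using that $\sigma$ is the $F_K^n$-pullback of $\tau \in H^0(Y, \Omega^1_{Y/K}/B_{n,Y/K})$, and following the explicit formula in the proof of the descent proposition in Section \ref{section abs vs rel}, one can lift $\sigma$ on a suitable affine chart around $x_0$ to an absolute form of the shape $\omega = \sum_i g_i(x_1,\ldots,x_d)\, dx_i + \eta$, with $g_i \in K^{p^n}[x_1,\ldots, x_d]$ and $\eta \in \Omega^1_K \otimes \mathcal{O}_X$; the second summand does not depend on $u$ and cancels in any difference $\omega|_u - \omega|_{x_0}$. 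Cutting by hyperplanes through $x_0$ and inducting on $d = \dim X$ exactly as in Proposition \ref{main prop evaluation map}, one reduces to $d = 1$, where $\omega = g(x)\,dx$ with $g \in K^{p^n}[x]$. Nontriviality of $\sigma$ in $\Omega^1_X/B_{n,X}$ translates, via the second lemma of this subsection, into the non-vanishing of the $h^{p^n}\,x^{p^n}\,\mathrm{dlog}(x)$-tail in the canonical decomposition of $g(x)\, dx$.

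The constancy hypothesis then reads $g(u_0 + \epsilon)\, d(u_0+\epsilon) - g(u_0)\, du_0 \in B_{n,K_v}$ for every $\epsilon$ in a small $v$-adic disk. Identifying $K_v \cong \Ff((t))$, expanding in the uniformizer $t$, and applying the second lemma above to track the $h^{p^n}\, t^{p^n}\, \mathrm{dlog}(t)$-component of the resulting form as a function of $\epsilon$, one obtains a power-series identity which, valid on a disk, forces $g$ to vanish and contradicts $\sigma \neq 0$. The main obstacle lies here: explicitly identifying the non-$B_n$ tail of $g(u)\,du$ as a nontrivial $p^n$-analytic function of $u$, and ruling out that it be locally constant. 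The non-separability of $K_v/K_v^{p^n}$ requires careful bookkeeping of the interaction between the $t$-adic expansion of $u_0 + \epsilon$ and the $p^n$-power structure of the coefficients of $g$, so that successive applications of the Cartier operator (implicit in the decomposition lemma) do not accidentally kill the intended logarithmic tail — this is precisely where the hypothesis that $\sigma$ descends to $K^{p^n}$ is used.
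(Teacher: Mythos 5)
Your reduction to the one--dimensional local model and your appeal to the decomposition lemma match the paper's setup, and the first half of your argument (the $p^n$-analyticity) is correct and essentially identical to the paper's. The second half, however, has a genuine gap, and you flag it yourself: after decomposing $\omega = f(x)\,dx$ as an element of $B_{n}$ plus the tail $h^{p^n}(x)\,x^{p^n}\,\mathrm{dlog}(x)$, you still must show that $u \mapsto h^{p^n}(u)\,u^{p^n}\,\mathrm{dlog}(u) \bmod B_{n,K_v}$ is not locally constant. Your plan --- expand $g(u_0+\epsilon)\,d(u_0+\epsilon)-g(u_0)\,du_0$ in the uniformizer and ``track the logarithmic tail'' through iterated applications of the Cartier operator --- is precisely the computation you concede you cannot carry out (``the main obstacle lies here''). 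As written, the proof is incomplete at its crucial step.

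The missing idea is far simpler than the bookkeeping you anticipate: post-compose the evaluation map with $C^n$. Since $C^n$ annihilates $B_{n,K_v}$ and sends $(h(u)u)^{p^n}\,\mathrm{dlog}(u)$ to $h(u)\,u\,\mathrm{dlog}(u)=h(u)\,u'\,dt$, the composite $C^n\circ \mathrm{ev}_\sigma$ is exactly the level-zero evaluation map attached to the form $h(x)\,dx$, and $h\neq 0$ because $\sigma\neq 0$. That map was already shown to be non-constant around every point in Proposition \ref{main prop evaluation map}, so if $\mathrm{ev}_\sigma$ were constant on a neighborhood, so would be $C^n\circ\mathrm{ev}_\sigma$, a contradiction. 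No $\epsilon$-expansion and no tracking of Cartier iterates is required; the hypothesis that $\sigma$ is an $F_K^n$-pullback enters only to ensure $f\in K_v^{p^n}[[x]]$, which is what makes the decomposition lemma applicable in the first place. The induction on $\dim X$ by cutting with hyperplanes then proceeds exactly as you describe.
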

\begin{proof}
 Locally around $x$ write $\sigma = \omega + B_n$ where $\omega$ is a regular differential form; then the evaluation map is given by $x_v \mapsto \omega_{|x_v} \text{ mod } B_{n,K_v}$. We know that this map is $p^n$-analytic, because it is the composition of a $p$-analytic map and a $p^n$-linear projection. 

We prove the second part by induction on the dimension $d$ of $X$ and we put ourselves in the situation of the previous proof. In the case $d=1$, we write $\omega = f(x) dx$ with $f(x) \in K_v^{p^n}[[x]]$; we then use the previous lemma to decompose   
$$\omega = d(f_1) + f_2^{p-1}d(f_2) \cdots f_n^{p^{n-1}-1}d(f_n) + h^{p^n}(x) x^{p^n}\mathrm{dlog}(x)$$
so that $\mathrm{ev}_\sigma(u) = h^{p^n}(u) u^{p^n}\mathrm{dlog}(u) \text{ mod } B_{n,K_v}.$ But after applying $C^n$ to $\omega$ we then obtain $C^n(\mathrm{ev}_\sigma(u)) = h(u)u'$ which we already know it is not constant. The proof now follows by induction, as before.   
\end{proof}
Note that this also proves Theorem \ref{Theorem Intro K3}.
\subsection{Invertible subsheaves}
In this short section we prove Theorem \ref{Theorem Intro LinBun}. Note that if $\mathcal{L} \subset \Omega^1_X$ is an invertible subsheaf then either $\mathcal{L} = \Omega^1_K \otimes_K \Oo_X$, in which case $\psi_{\mathcal{L}}$ is trivial, or $\mathcal{L} \cap \Omega^1_K \otimes_K \Oo_X = 0$ and $\mathcal{L}$ is a lift of some $\mathcal{L}' \subset \Omega^1_{X/K}$.
\begin{proof}[Proof of Theorem \ref{Theorem Intro LinBun}]
Let $(x_v)$ be as in the statement and consider another local component $x_w$. By Theorem \ref{theorem 1 intro} there is a form $\tilde{\omega} \in \Omega^1_K \setminus 0$ such that $\omega_{| x_w}= \tilde{\omega}$ for every place $w$. So in particular $\omega_{|x_w}\neq 0$ as well. Since $\omega$ is necessarily non-zero, we can complete it to a $K$-basis $\omega = \omega_0, \omega_1, \cdots, \omega_N$ of $H^0(X, \mathcal{L})$. Again by Theorem \ref{theorem 1 intro} there are $\tilde{\omega}_i \in \Omega^1_K$ such that $\omega_{i, | x_w} = \tilde{\omega}_i$ for every $i$ and every $w$. Now pick an affine Zariski open $U \subset X$ such that $x_w \in U$. Our first claim is that $\psi_{\mathcal{L}}(x_w)$ is global. Up to restricting $U$, we can assume that $\omega$ never vanishes on $U$. In fact, $x_v \notin Z(\omega)$ for otherwise $\omega_{|x_v} = 0$. We can thus find regular functions $g_{k} \in \Oo(U)$ such that we have $\omega_{k} = g_{k} \omega$ over $U$ for every $k \geq 0$. But then $\tilde{\omega}_k = g_{k}(x_w) \tilde{\omega}$, from which we conclude that $g_k (x_w) \in K$ for every index $k$, and hence $\psi_{\mathcal{L}}(x_w) = [1 \colon g_1 (x_w) \colon g_2 (x_w) \colon \cdots \colon g_N (x_w)] \in \Pp^N(K)$. Finally, pick another component $x_{w'}$. By the same reasoning as before, we can assume that $x_w, x_{w'} \in U$ and $\omega$ never vanishes on $U$. The same computation then shows $\psi_{\mathcal{L}} (x_{w'}) = \psi_{\mathcal{L}} (x_w)$ which concludes the proof.
\end{proof}

\begin{cor} \label{cor prod curves}
Let $C/K$ be a smooth projective curve which descends to $K^p$. Assume that $\omega_{C/K}$ is very ample (i.e., if $g(C) \geq 2$ and $C$ is not hyperelliptic) and let $(x_v) \in C(\A_K)^{\Br(X)[p]}$. If there is some local component $x_v$ such that $x_v \notin C(K_v)^p$ then $(x_v) \in C(K)$. 
\end{cor}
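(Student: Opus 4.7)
The plan is to deduce this corollary from Theorem \ref{Theorem Intro LinBun} applied to an invertible subsheaf $\mathcal{L} \subset \Omega^1_C$ isomorphic to the canonical bundle $\omega_{C/K}$. Since $C$ descends to $K^p$, Proposition \ref{prop split diff} supplies a splitting $\Omega^1_C \cong \Omega^1_{C/K} \oplus (\Omega^1_K \otimes_K \Oo_C)$, so I would lift $\omega_{C/K} = \Omega^1_{C/K}$ to a subsheaf $\mathcal{L}$ of $\Omega^1_C$ via this splitting. By hypothesis $\omega_{C/K}$ is very ample, so $\psi_{\mathcal{L}} \colon C \to \Pp^N_K$ is the canonical embedding, in particular a closed immersion; hence once Theorem \ref{Theorem Intro LinBun} produces $\psi_{\mathcal{L}}((x_v)) \in \Pp^N(K)$, the conclusion $(x_v) \in C(K)$ follows immediately.

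The real content will be verifying the non-vanishing hypothesis of Theorem \ref{Theorem Intro LinBun}: I need some $\omega \in H^0(C, \mathcal{L})$ with $\omega|_{x_v} \neq 0$ in $\Omega^1_{K_v}$ for the distinguished component. On a Zariski neighbourhood of the image of $x_v$, I will use a local coordinate $u$ descended from the model $Y/K^p$, so that $du$ lies in the $\Omega^1_{C/K}$-summand of $\Omega^1_C$. Every section of $\mathcal{L}$ then takes the local form $\omega = f \cdot du$, and pulling back gives $\omega|_{x_v} = f(x_v)\, du_0$ in $\Omega^1_{K_v}$, where $u_0 = u(x_v) \in K_v$.

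Two independent phenomena can kill this pullback, and the two hypotheses of the corollary will be used to rule out each. First, $du_0 = 0$ in $\Omega^1_{K_v}$ if and only if $u_0 \in K_v^p$, which under the descent identification is exactly $x_v \in C(K_v)^p$; this is excluded by hypothesis, so $du_0 \neq 0$. Second, $f(x_v) = 0$ means $\omega$ vanishes at $x_v$ as a section of $\omega_{C/K}$; since $\omega_{C/K}$ is very ample, hence globally generated, I can choose $\omega$ so that $f(x_v) \neq 0$. For such $\omega$ one has $\omega|_{x_v} \neq 0$, and Theorem \ref{Theorem Intro LinBun} closes out the argument.

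The main delicacy I anticipate is pinning down the splitting carefully enough to justify the equivalence $du_0 = 0 \Leftrightarrow x_v \in C(K_v)^p$: this is a bookkeeping matter about how a descended coordinate $u$ on $Y$ transforms under the relative Frobenius $F \colon Y \to Y^{(1)} = C$ and the corresponding behaviour of $K_v$-points. The remaining ingredients, namely very ampleness implying global generation of $\omega_{C/K}$ and the canonical embedding being a closed immersion for non-hyperelliptic curves of genus at least two, are standard.
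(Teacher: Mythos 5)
Your proposal is correct and follows essentially the same route as the paper: apply Theorem \ref{Theorem Intro LinBun} to the lift $\mathcal{L}$ of $\omega_{C/K}$ obtained from the splitting of Proposition \ref{prop split diff}, and verify the non-vanishing hypothesis by combining global generation of $\omega_{C/K}$ with the fact that vertical differentials detect points outside $C(K_v)^p$. Your local-coordinate computation with $\omega = f\,du$ is exactly an unwinding of the paper's Lemma \ref{lemma vertical}, which is the ingredient the paper cites at this step.
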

\begin{proof}
We only need to show that if $x_v \notin C(K_v)^p$ then there is some $\omega \in H^0(C, \Omega^1_{C/K}) \subset H^0(C, \Omega^1_{C})$ such that $\omega_{| x_{v}} \neq 0$. But  $H^0(C, \Omega^1_{C/K}) $ generates $\Omega^1_{C/K}$ and we use Lemma \ref{lemma vertical} from the next section to conclude. 
\end{proof}

\subsection{Pure points and globally generated differentials} \label{sec pure points}
In this section we aim to generalise the previous corollary to higher dimensional varieties. Assume now that $X$ descends to $K^p$ and fix a model $Y/K$ as before. Let $K^s$ be a separable closure of $K$ and consider $X(K^s) \setminus X(K^s)^p$. Note that if $L \subset K^s$ we have $X(L) \cap X(K^s)^p  = X(L)^p$, so there is a natural inclusion $X(L) \setminus X(L)^p \subset X(K^s) \setminus X(K^s)^p$. Let us now call a local section of $\Omega^1_X$ \textit{vertical} if it has the form $(0, \omega) \in \Omega^1_K \otimes \Oo_X \oplus \Omega^1_{X/K}$.
\begin{lemma} \label{lemma vertical}
Let $L/K$ be any finite separable extension and let $a \in X(L)$. Then $a \in X(L)^p$ if and only if $\omega_{|a} = 0 \in \Omega^1_L$ for every local vertical differential. 
\end{lemma}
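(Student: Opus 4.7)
The plan is to separate the two implications and leverage the explicit description of the splitting given in Proposition \ref{prop split diff}. Unpacking that proposition, a local section $\omega \in \Omega^1_X$ is vertical exactly when it lies in the kernel of the projector $d_K$, and then locally $\omega$ is an $\Oo_X$-linear combination of differentials of the form $d(F_K^{*}g)$ with $g \in \Oo_Y$ (since for $f = k \cdot F_K(g)$ the proposition computes $d_K(f) = F_K(g)\,dk$, so the kernel of $d_K$ is generated by the terms $k\,d(F_K^{*}g)$ appearing in $df$).

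For the forward direction I will argue that such vertical forms are annihilated by the relative Frobenius. Indeed, if $a = F(b)$ with $b \in Y(L)$ and $\omega$ is locally $\sum h_i\, d(F_K^{*}g_i)$, functoriality of the absolute differential together with the identity $F_K \circ F = F_Y^{\mathrm{abs}}$ yields
\[
F^{*}\omega \;=\; \sum (F^{*}h_i)\, d(F^{*}F_K^{*}g_i) \;=\; \sum (F^{*}h_i)\, d(g_i^{p}) \;=\; 0,
\]
so $\omega_{|a} = b^{*}F^{*}\omega = 0$.

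For the converse I would exhibit a single vertical coordinate form that detects $a \notin X(L)^{p}$. Work Zariski-locally around the image of $a$: pick a presentation of the smooth $Y$ with étale coordinates $x_1, \ldots, x_n$, so that $X = Y^{(1)}$ acquires coordinates $y_i \coloneqq F_K^{*}x_i$, and each $dy_i$ is vertical by the remark above. Because the relative Frobenius satisfies $F^{*}y_i = x_i^{p}$, it sends an $L$-point $(b_i)$ of $Y$ to $(b_i^{p})$; conversely, if every $a_{y_i} \coloneqq a^{*}y_i \in L$ is a $p$-th power, the unique $p$-th roots $\beta_i \in L$ assemble to an $L$-point of $Y$ mapping to $a$ (the defining equations of $Y$ are automatically satisfied, since the corresponding equations of $X$ are the $p$-th powers and $L$ is a field). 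Hence $a \notin X(L)^{p}$ forces some $a_{y_i} \notin L^{p}$, and then
\[
(dy_i)_{|a} \;=\; d(a_{y_i}) \;\neq\; 0
\]
in $\Omega^1_L$ by Corollary \ref{cor: d = 0 iff p power} applied to $L$ (fields of positive characteristic always admit a $p$-basis).

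The only delicate point is to match the abstract splitting of Proposition \ref{prop split diff} with concrete local coordinates; once one observes that coordinates pulled back from $Y$ along $F_K$ automatically define vertical differentials on $X$, both directions reduce to one-line computations, and the hypothesis that $L/K$ be finite separable plays essentially no role beyond ensuring that $L$ is a field.
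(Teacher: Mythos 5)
Your proof is correct and follows essentially the same route as the paper: choose local coordinates in which the equations of $X$ have coefficients in $K^p$ (equivalently, come from $Y$ by twisting), observe that the coordinate differentials generate the vertical forms, and reduce both implications to the fact that $d(c)=0$ in $\Omega^1_L$ iff $c\in L^p$. The only cosmetic issue is the phrase ``\'etale coordinates'': what your argument actually uses (and what the paper uses) is a local closed-embedding presentation $Y=\Spec(K[x_1,\dots,x_n]/J)$, so that the equations of $X=Y^{(1)}$ are the coefficient-wise $p$-th powers of those of $Y$ and the $p$-th roots of the coordinates of $a$ automatically satisfy $J$.
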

\begin{proof}
Locally around $a$ we can write $X = \Spec(A)$ with $A = K[x_1, \cdots, x_n]/I$ where $I$ is generated by a regular sequence $f_1, \cdots, f_k$ of polynomials in $K^p[x_1, \cdots, x_n]$. A local generating set for the vertical differentials is then given by the various $dx_i$. The point $a$ gives elements $x_i(a) \in L$ and in these coordinates we have that $a \in X(L)^p$ if and only if $x_i(a) \in L^p$ for every $i$. Finally, this is equivalent to ${(dx_i)}_{|a} = 0$ for every $i$. 
\end{proof}
Recall that a field extension $\kappa \subset L$ is separable if $L \otimes_\kappa \kappa^{1/p}$ is reduced, i.e., if it is formally smooth. A consequence of this is that $\Omega^1_\kappa \otimes_\kappa L \rightarrow \Omega^1_L$ is injective - and hence that $\Omega^1_\kappa \rightarrow \Omega^1_L$ is injective as well. 
\begin{defi}
A field extension $\kappa \subset L$ is weakly separable if $\Omega^1_\kappa \rightarrow \Omega^1_L$ is injective. If $X/K$ is a variety we say that $a \in X(L)$ is weakly separable if its image $x \in X$ satisfies 
\begin{enumerate}[label=(\roman*)]
    \item $K \subset \kappa(x)$ is separable;
    \item $\kappa(x) \subset L$ is weakly-separable.
\end{enumerate}
\end{defi}
Note that if $L/K$ is finite and separable then every point in $X(L)$ is weakly separable. So the condition only concerns positive dimensional points. For example, if $\kappa$ has transcendence degree at least two over $K$, then $\kappa \subset K_v$ can never be separable but it will in general be weakly separable:
\begin{thm} \label{Theorem density pure}
Let $X/K$ be a smooth variety and let $x \in X$ be a separable point. Let $X(K_v,x) \subset X(K_v)$ be the subset of local point with image $x$. Then, if $X(K_v,x) \neq \emptyset$ the subset $X^0(K_v,x) \coloneqq X^0(K_v) \cap X(K_v,x)$ is the complement of countably many proper $p$-analytic subsets. 
\end{thm}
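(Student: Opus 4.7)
The plan is to identify $X^0(K_v,x)$ inside the natural ambient $K_v$-analytic manifold as the complement of two countable families of proper $p$-analytic closed subsets: those capturing the failure of weak separability of the induced embedding $\kappa(x)\hookrightarrow K_v$, and those capturing the condition of being a $p$-power point.

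First I reduce to a smooth situation. Let $X^1 \coloneqq \overline{\{x\}}^{\mathrm{red}} \subset X$, an integral closed subvariety with generic point $x$. Since $x$ is separable, $X^1/K$ is generically smooth; shrinking $X$ about $x$, I may assume $X^1/K$ is smooth of dimension $d' = \dim X^1$. Any $K_v$-point of $X$ with set-theoretic image $x$ factors uniquely through $X^1$, so $X(K_v,x)$ is in natural bijection with $\Hom_K(\kappa(x),K_v)$. Since $X^1$ is Noetherian over the countable field $K$, it has only countably many closed integral subvarieties, and $X(K_v,x)$ is the complement in $X^1(K_v)$ of a countable union of proper closed analytic subsets. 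Here $X^1(K_v)$ is a $K_v$-analytic manifold of dimension $d'$, providing the ambient $p$-analytic space; the assumption $X(K_v,x)\neq\emptyset$ gives at least one base embedding to work with.

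Next I encode the two defining conditions of $X^0$ analytically. Fix a $p$-basis element $\tau\in K$ and a separating transcendence basis $t_1,\ldots,t_{d'}$ of $\kappa(x)/K$, so that $d\tau,dt_1,\ldots,dt_{d'}$ form a $\kappa(x)$-basis of $\Omega^1_{\kappa(x)}$. For $a\in X(K_v,x)$, the pullback along $\iota_a\colon\kappa(x)\hookrightarrow K_v$ sends $dt_i\mapsto\iota_a(t_i)'\,dt$ in $\Omega^1_{K_v}=K_v\,dt$, where $'$ is differentiation with respect to a uniformizer of $K_v$ (a $p$-analytic operation, as shown in Proposition \ref{main prop evaluation map}). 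Weak separability of $\iota_a$ fails precisely when some non-zero tuple $(c_0,\ldots,c_{d'})\in\kappa(x)^{1+d'}$ satisfies $\iota_a(c_0)\tau'+\sum_{i=1}^{d'}\iota_a(c_i)\iota_a(t_i)'=0$; since $\kappa(x)$ is countable, this yields countably many closed $p$-analytic subsets of $X^1(K_v)$. Separately, because the relative Frobenius $F\colon Y\to X$ is purely inseparable and the unique preimage $y$ of $x$ has residue field $\kappa(y)\supset\kappa(x)$ finite purely inseparable, generated by $p$-th roots of finitely many $v_1,\ldots,v_m\in\kappa(x)$, the condition $a\in X(K_v)^p$ is equivalent to $\iota_a(v_j)\in K_v^p$ for all $j$, a closed $K_v^p$-linear (hence $p$-analytic) condition, and $K_v^p\subsetneq K_v$ is proper.

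The main obstacle is to verify properness of each of the countably many subsets arising in the weak-separability analysis: for every non-zero $(c_i)$, some embedding $\iota_a$ must avoid the corresponding relation, and analogously there must exist embeddings with $\iota_a(v_j)\notin K_v^p$. This reduces to a genericity claim rooted in $K_v$ being an uncountable, highly transcendental extension of $\kappa(x)$: varying $\iota_a$ moves $\iota_a(t_i)$ over a dense open of $K_v^{d'}$ (subject to the finite algebraic relations of $\kappa(x)/K(t_1,\ldots,t_{d'})$), and no non-trivial rational identity of the above form can hold for all such choices, since the derivative $\alpha\mapsto\alpha'$ on $K_v$ is not a rational function of $\alpha$ over any countable subfield. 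Granting these properness statements, $X^0(K_v,x)$ is obtained from $X^1(K_v)$ by removing a countable union of proper $p$-analytic closed subsets, which combined with the excision producing $X(K_v,x)$ from $X^1(K_v)$ yields the claim.
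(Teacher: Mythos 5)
Your overall architecture matches the paper's: reduce to the (generically smooth) closure of $x$, describe $X^0(K_v,x)$ inside it as the complement of countably many $p$-analytic conditions indexed by the countable field $\kappa(x)$, and observe that the $p$-power condition has the same shape as the weak-separability conditions (indeed $\iota_a(v_j)\in K_v^p$ is equivalent to $(dv_j)_{|a}=0$, so for $v_j\notin\kappa(x)^p$ it is literally one of them). The only structural difference is minor: you work directly with a separating transcendence basis of $\kappa(x)/K$, whereas the paper first treats $\mathbb{A}^n_K$ and the generic point and then transports the result along a finite \'etale chart, using the lemma that weak separability is insensitive to finite separable extensions.

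The genuine issue is the step you explicitly defer: the properness (equivalently, empty interior) of each excluded set $\{a\colon \iota_a(c_0)\tau'+\sum_i\iota_a(c_i)\iota_a(t_i)'=0\}$. The heuristic you offer --- that $\alpha\mapsto\alpha'$ is not a rational function of $\alpha$ over a countable subfield --- does not by itself rule out that a particular combination $\sum_i c_i(h)h_i'$ vanishes identically on a $v$-adic open set: one must exclude cancellation among the several terms, and one must do so on the subvariety cut out by the algebraic relations of $\kappa(x)$, not merely on a dense open of $K_v^{d'}$. This is exactly the content of the second half of Proposition \ref{main prop evaluation map} (for a form whose image in $\Omega^1_{X/K}$ is nonzero, $\mathrm{ev}_\omega$ is non-constant around every point, proved by the explicit perturbation argument with $\epsilon$ and $\epsilon^p$), and the paper's proof of the present theorem closes the corresponding step precisely by that result, which is why $\psi_{\underline{Q}}^{-1}\{0\}$ is asserted there to be a proper $p$-analytic subvariety. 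Your argument becomes complete once you replace the genericity claim by an appeal to Proposition \ref{main prop evaluation map}, applied on a smooth affine open of $\overline{\{x\}}$ where the $c_i$ and $t_i$ are regular and where some $c_i$ with $i\geq 1$ is nonzero; the remaining case, where only $c_0\neq 0$, is trivial since then the evaluation equals $\iota_a(c_0)\tau'\neq 0$.
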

To prove the theorem we need the following lemma 
\begin{lemma}
Assume that $ \kappa \subset \kappa' \subset L$ is such that $\kappa \subset \kappa'$ is finite and separable; then $\kappa \subset L$ is weakly separable if and only if $\kappa' \subset L$ is weakly separable. 
\end{lemma}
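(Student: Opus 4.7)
\medskip

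The easy implication is that if $\Omega^1_{\kappa'} \to \Omega^1_L$ is injective, then so is $\Omega^1_\kappa \to \Omega^1_L$, since the latter factors as the composition $\Omega^1_\kappa \xrightarrow{h} \Omega^1_{\kappa'} \xrightarrow{g} \Omega^1_L$ with $h$ injective: the cotangent sequence for the finite separable extension $\kappa \subset \kappa'$ gives $\Omega^1_{\kappa'/\kappa} = 0$ and hence an isomorphism $\kappa' \otimes_\kappa \Omega^1_\kappa \xrightarrow{\sim} \Omega^1_{\kappa'}$, while the inclusion $\omega \mapsto 1 \otimes \omega$ is injective by flatness of $\kappa'$ over $\kappa$.

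For the non-trivial direction, assume $f$ injective. Using the identification $\Omega^1_{\kappa'} \cong \kappa' \otimes_\kappa \Omega^1_\kappa$ from above, $g$ becomes $a \otimes \omega \mapsto a \cdot f(\omega)$. By the primitive element theorem, write $\kappa' = \kappa(\alpha)$ with separable minimal polynomial $m(X) \in \kappa[X]$ of degree $n = [\kappa':\kappa]$, and take the basis $\{1, \alpha, \ldots, \alpha^{n-1}\}$ of $\kappa'$ over $\kappa$. Any element of $\ker g$ takes the form $v = \sum_{i=0}^{n-1} \alpha^i \otimes \omega_i$ with $\omega_i \in \Omega^1_\kappa$, and $g(v) = 0$ becomes the polynomial identity $P(\alpha) = 0$ in $\Omega^1_L$, where $P(X) := \sum_{i=0}^{n-1} X^i f(\omega_i)$ is viewed as an element of the free $L[X]$-module $\Omega^1_L[X]$. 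The goal is to show $P \equiv 0$; since $f$ is injective, $f(\omega_i) = 0$ would then force $\omega_i = 0$.

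To produce the additional identities needed for a Vandermonde argument, enlarge $L$ to $\tilde{L} = L \cdot \kappa^m$, where $\kappa^m$ is the splitting field of $m$ over $\kappa$. The extension $\tilde{L}/L$ is finite separable, hence $\Omega^1_L \hookrightarrow \Omega^1_{\tilde{L}}$ by flatness, and $\Omega^1_\kappa \to \Omega^1_{\tilde L}$ remains injective. Using linear disjointness of $L$ and $\kappa^m$ over $F := L \cap \kappa^m$, we have $\tilde L = L \otimes_F \kappa^m$, and the Galois group $G = \mathrm{Gal}(\kappa^m/F)$ acts on $\tilde L$ fixing $L$ pointwise. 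For $\tau \in G$, one has $\tau(P(\alpha)) = P(\tau(\alpha))$, since $\tau$ fixes every coefficient $f(\omega_i) \in \Omega^1_L$; this yields $P(\alpha_j) = 0$ in $\Omega^1_{\tilde L}$ for each $F$-conjugate $\alpha_j$ of $\alpha$. Since $m$ is separable the conjugates are distinct, so by Vandermonde invertibility (which holds because $\Omega^1_{\tilde L}[X]$ is a free $\tilde L[X]$-module, so linear systems with nonzero Vandermonde determinant have unique solutions) all $f(\omega_i)$ vanish.

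The main obstacle is that when $m$ is reducible over $F$, the $G$-orbit of $\alpha$ has size $[F(\alpha):F] < n$, producing fewer than $n$ Vandermonde relations; a direct application then fails. To circumvent this, I would induct on $n = [\kappa':\kappa]$, reducing to the case where $m$ is irreducible over $F$: if $F(\alpha)$ is a proper intermediate field between $\kappa'$ and $F$ of some degree $n' < n$, applying the lemma inductively to the finite separable extension $\kappa \subset F$ (which has degree bounded by $[\kappa^m:\kappa]$, handled by a separate induction on the degree of the Galois closure) yields weak separability of $F \subset L$, after which the lemma for the smaller extension $F \subset F(\alpha) \supset \kappa'$ finishes the proof via the easy direction.
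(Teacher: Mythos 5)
Your reduction of the hard direction to showing that $P(X)=\sum_{i}X^{i}f(\omega_i)$ vanishes identically isolates the real difficulty, but the Galois--conjugation mechanism you build around it produces nothing: by hypothesis $\kappa'=\kappa(\alpha)$ is contained in $L$, so $\alpha\in F=L\cap\kappa^{m}$ and $F(\alpha)=F$. Every $\tau\in\Gal(\kappa^{m}/F)$ therefore fixes $\alpha$, the orbit of $\alpha$ has size one, and you recover only the relation $P(\alpha)=0$ you started with. The ``obstacle'' you flag at the end is thus not an edge case but the only case, and the proposed induction on $[F(\alpha)\colon F]$ never gets started. What is actually needed is that $1,\alpha,\dots,\alpha^{n-1}$ remain linearly independent over the $\kappa$-subspace $f(\Omega^1_\kappa)\subset\Omega^1_L$, i.e.\ that $\kappa'\otimes_\kappa f(\Omega^1_\kappa)\to\Omega^1_L$ is injective; this does not follow from injectivity of $f$, which only controls $\kappa$-linear combinations. (The paper's own proof asserts exactly this step --- that $\sum_i\alpha^i f(\omega_i)=0$ forces each $f(\omega_i)=0$ --- without justification, so your instinct that something must be proved here is correct.)

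The gap cannot be closed, because the statement as written appears to be false. Take $L=\FF_p(u,v)$, put $s=u^{p}v$, $b_1=s^{p}-s$, and set $\kappa=\FF_p(b_1,v)$, $\kappa'=\kappa(s)=\FF_p(u^{p},v)$. The polynomial $X^{p}-X-b_1$ is irreducible over $\kappa=\FF_p(v)(b_1)$ (a degree count in the variable $b_1$ shows $b_1\neq g^{p}-g$ for $g\in\kappa$), so $\kappa'/\kappa$ is separable of degree $p$; in particular $u^{p}=s/v\notin\kappa$. In $\Omega^1_L=L\,du\oplus L\,dv$ one has $db_1=-ds=-u^{p}\,dv$, so a general element $a\,db_1+b\,dv$ of $\Omega^1_\kappa$ maps to $(b-au^{p})\,dv$, which vanishes only for $a=b=0$ because $u^{p}\notin\kappa$: hence $\kappa\subset L$ is weakly separable. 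On the other hand $d(u^{p})$ is a nonzero element of $\Omega^1_{\kappa'}$ (since $u^{p}\notin(\kappa')^{p}$) mapping to $pu^{p-1}\,du=0$ in $\Omega^1_L$, so $\kappa'\subset L$ is not weakly separable. Consequently no argument along these lines --- yours or the paper's --- can succeed without additional hypotheses on the embedding $\kappa\subset L$, and the lemma needs to be reformulated before it can be proved.
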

\begin{proof}
One direction is clear. Now assume that $\Omega^1_\kappa \rightarrow \Omega^1_L$ is injective and let $\alpha \in \kappa'$ be a primitive element (separable over $\kappa$). We can write any $f \in \kappa'$ as $\sum_{i=0}^{n-1} g_i \alpha^i$ for $g_i \in \kappa$ and $n = [\kappa \colon \kappa']$. Since $\Omega^1_{\kappa'} \cong \Omega^1_{\kappa} \otimes_\kappa \kappa'$ we can write any differential form as $\omega = \sum_{i=0}^{n-1} \omega_i \alpha^i$ with $\omega_i \in \Omega^1_{\kappa}$ and so $\omega$ is send to zero in $\Omega^1_L$ if and only if each $\omega_i$ does too, which concludes the proof. 
\end{proof}
\begin{proof}[Proof of Theorem \ref{Theorem density pure}]
We begin by proving the result for $X = \mathbb{A}^n_K$ and $x$ the generic point. A local point is given by a tuple $(h_1, \cdots, h_n) \in K_v^n$, and its image is $x$ if and only if the $h_i$ are algebraically independent over $K$. If $P \in K[x_1, \cdots, x_n]$ is any non-constant polynomial the evaluation map $\phi_P \colon K_v^n \rightarrow P(h_1, \cdots, h_n)$ is analytic and $\phi_P^{-1} \{0 \} \subset K_v^n$ is an analytic hypersurface. But then the countable union $\cup_P \phi_P^{-1} \{0 \}$ has measure zero by Baire's theorem, and $ K_v^n \setminus \cup_P \phi_P^{-1} \{0 \}$ is therefore dense in the $v$-adic topology and parametrises algebraic independent tuples. Now, similarly, fix a non-constant tuple of polynomials $\underline{Q} = (Q_1, \cdots, Q_n) \in K[x_1, \cdots, x_n]^n$ and consider the function $\psi_{\underline{Q}} \rightarrow K_v$ sending $(h_1, \cdots, h_n) \mapsto \sum_i Q_i(h_1, \cdots, h_n) h_i'$ where $h \mapsto h'$ is the derivation induced by $t$. So now $\psi_{\underline{Q}}$ is $p$-analytic and once again the level-set $\psi_{\underline{Q}}^{-1}\{0 \}$ is a proper $p$-analytic subvariety. Hence, the set  $\bigcup_{P} \phi_P^{-1} \{0 \} \cup \bigcup_{\underline{Q}} \psi_{\underline{Q}}^{-1}$ has still measure zero and $$X^0(K_v,x) = K_v^n \setminus \big(\bigcup_{P} \phi_P^{-1} \{0 \} \cup \bigcup_{\underline{Q}} \psi_{\underline{Q}}^{-1}  \{0 \}\big)$$
(where the first union ranges over all non-constant polynomials and the second over all non-constant differential forms) which proves our point. 

We now prove the result for a general $X$ with $x$ still the generic point. Take any $a \in X(K_v)$ with image $x$ and choose an open affine $U \subset X$ endowed with an etale map $f \colon U \rightarrow \mathbb{A}^n_K$, which we can assume finite onto its image as well. Locally in the $v$-adic topology we can find a neighborhood $\mathcal{U} \subset \mathbb{A}^n_K(K_v)$ of $f(a)$ over which $f$ has an (analytic algebraic) inverse $f^{-1}$. So we have an open immersion $f^{-1} \colon \mathcal{U} \hookrightarrow X(K_v)$ whose image contains $a$. But by the previous lemma we have $f^{-1}(\mathcal{U}) \cap X^0(K_v,x) = f^{-1}((\mathcal{U}^0,x))$ where $(\mathcal{U}^0,x) = (\mathbb{A}^n)^0(K_v,x) \cap \mathcal{U}$ denote the weakly separable points of $\mathcal{U}$ with image the generic point of $\mathbb{A}^n_K$. Finally, for the generic case, let $x_v \in X(K_v)$ be a weakly separable point with image $z_v \in X$. Since $K \subset \kappa(z_v)$ is separable, if $Z_v \subset X$ is the Zariski closure of $z_v$, then the smooth locus of $Z_v$ over $K$ is not empty. Hence we can reduce to the previous computations. 
\end{proof}
\begin{defi}
We say that $a \in X(L)$ is \textit{pure} if it is weakly separable and $a \notin X(L)^p$. We denote by $X^0(L) \subset X(L)$ the subset of pure points. 
\end{defi}
The following facts are easily verified:
\begin{enumerate}[label=(\roman*)]
    \item $X^0(K^s) = X(K^s) \setminus X(K^s)^p$;
    \item $X(L) \cap X^0(L') = X^0(L')$ for any finite separable extension $L'/L$;
    \item $X^0(K_v) \subset X(K_v)$ is the countable complement of proper $p$-analytic subvarieties. 
\end{enumerate}
We finally define $X^0(\A_K) \subset X(\A_K)$ as the subset of adelic points all whose components are pure. We have the following corollary of Theorem \ref{Theorem density pure}.
\begin{cor}
If  $X(\A_K) \neq \emptyset$ then $X^0(\A_K) \subset X(\A_K)$ is dense for the adelic topology. 
\end{cor}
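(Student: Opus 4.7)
The plan is to verify density directly on a basis of the adelic topology. Fix a smooth affine model $U$ of the function field $K$ and a model $\mathcal{X} \to U$ of $X$ as in the proof of Theorem \ref{theorem 1 intro}; a basic open neighborhood of a point $(x_v) \in X(\A_K)$ then has the form
\[
\mathcal{U} \;=\; \prod_{v \in S} U_v \;\times\; \prod_{v \notin S} \mathcal{X}(\Oo_v),
\]
where $S$ is a finite set of places (large enough that all ``bad'' places, and all places where $x_v \notin \mathcal{X}(\Oo_v)$, lie in $S$) and each $U_v \subset X(K_v)$ is a $v$-adic open neighborhood of $x_v$. Producing a pure adelic point in $\mathcal{U}$ then reduces to finding, at every place $v$, a pure local point $y_v$ lying in $U_v$ for $v \in S$ and in $\mathcal{X}(\Oo_v)$ for $v \notin S$.

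The key step is the local density statement: for every place $v$, the subset $X^0(K_v)$ is $v$-adically dense in $X(K_v)$. I would apply Theorem \ref{Theorem density pure} with $x = \eta$ the generic point of $X$, which is a separable point since $X/K$ is smooth, so that $K(X)/K$ is separably generated. The theorem realises $X^0(K_v,\eta)$ as the complement of countably many proper closed $p$-analytic subvarieties of $X(K_v,\eta)$, and Baire's category theorem applied to the locally complete metrisable space $X(K_v,\eta)$ yields density of $X^0(K_v,\eta)$ in $X(K_v,\eta)$. To bootstrap to density in all of $X(K_v)$, observe that since the global function field $K$ is countable and every affine chart of $X$ has a countable Noetherian coordinate ring, $X$ has only countably many proper closed $K$-subvarieties, each of which contributes a proper closed analytic subset of $X(K_v)$. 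Hence the set of local points with non-generic image is meagre, so $X(K_v,\eta)$ is dense in $X(K_v)$; combining the two steps proves the local density of $X^0(K_v)$.

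With local density in hand, the adelic approximation is routine: for each $v \in S$, density supplies $y_v \in U_v \cap X^0(K_v)$, and for each $v \notin S$, the set $\mathcal{X}(\Oo_v) \subset X(K_v)$ is $v$-adically open and contains $x_v$ (hence is nonempty), so density again supplies $y_v \in \mathcal{X}(\Oo_v) \cap X^0(K_v)$. The resulting tuple $(y_v)$ lies in $X^0(\A_K) \cap \mathcal{U}$, as required. I do not anticipate any genuine obstacle: the only nontrivial ingredient is the Baire-style density at each place, which is already packaged in Theorem \ref{Theorem density pure}; the remaining work is bookkeeping with the restricted product topology, and the countability of $K$ enters only in the bridge from $X(K_v,\eta)$ to all of $X(K_v)$.
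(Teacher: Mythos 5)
Your argument is correct and is essentially the proof the paper intends (the paper leaves it implicit, resting on Theorem \ref{Theorem density pure} and the observation that $X^0(K_v)$ is the complement in $X(K_v)$ of countably many proper, hence closed and nowhere dense, $p$-analytic subsets): local density at each place by a Baire-category argument, then approximation place by place in the restricted-product topology. The only cosmetic difference is your detour through $X(K_v,\eta)$ followed by the bridge to $X(K_v)$ via the countably many proper closed $K$-subvarieties, which can be shortcut by applying Baire once in the compact space $X(K_v)$, where $X^0(K_v)$ is already comeagre.
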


We can now begin with the proof of Theorem \ref{Theorem Intro GlobGen}. Let $S \coloneqq H^0(X, \Omega^1_{X/K}) \subset H^0(X, \Omega^1_X)$ be the subspace of vertical differentials, and put $\mathcal{S} = S \otimes \Oo_X$, so we have an exact sequence of locally free sheaves 
\begin{equation} \label{eq whome omega}
    0 \rightarrow \mathcal K \rightarrow \mathcal S \rightarrow \Omega^1_{X/K} \rightarrow 0,
\end{equation}
where $\mathcal K$ is defined by the sequence. If $s = \dim S$ this yields a map over $K$ \begin{equation} \label{Definition psi}
    \psi \colon X \rightarrow \mathrm{Grass(S, s-d)},
\end{equation}
where $d = \dim X$. By definition if $L/K$ is any field extension and $a \in X(L)$ then $$\psi(a)  = \{ \omega \in S_L  \colon a \in Z(\omega) \} $$
where $S_L = S \otimes_K L$ and $Z(\omega) \subset X_L$ is the vanishing locus of $\omega$. In particular, if $\omega \in \psi(a)$, then $\omega_{| a} = 0 \in \Omega^1_L$ by the functoriality of pulling back differential forms. Now, any $x \in X^0(K^s)$ yields a $K^s$-linear map
\begin{align}\label{aux map}
    \theta_x \colon S_{K^s} & \rightarrow \Omega^1_{K^s}  \\
    \omega & \mapsto \omega_{|x}.
    \end{align}
\begin{prop} 
If $\Omega^1_{X/K}$ is gloabally generated then $\theta_x$ is surjective.
\end{prop}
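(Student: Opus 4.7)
The plan is to exploit that, by hypothesis on $K$, the target $\Omega^1_{K^s}$ is one-dimensional over $K^s$. Indeed, since $K$ is a global function field with perfect constant field $\mathbb{F}$, one has $\Omega^1_K = \Omega^1_{K/\mathbb{F}}$, which is one-dimensional over $K$; and since $K \subset K^s$ is separable algebraic, $\Omega^1_{K^s} \cong \Omega^1_K \otimes_K K^s$ is one-dimensional over $K^s$. So proving surjectivity of $\theta_x$ reduces to producing a single $\omega \in S_{K^s}$ with $\omega_{|x} \neq 0$.

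Next, I would factor $\theta_x$ using the splitting of Proposition \ref{prop split diff} (which applies since $X$ descends to $K^p$): a vertical form lives as a summand in $\Omega^1_X$, and evaluation at $x$ gives
\[
\theta_x \colon S_{K^s} \xrightarrow{\mathrm{ev}_x} (\Omega^1_{X/K})_x \otimes_{\kappa(x)} K^s \xrightarrow{\iota_x} \Omega^1_{K^s},
\]
where $\iota_x$ is induced by the chosen splitting and sends, in étale coordinates $x_1,\dots,x_d$ around the image $z$ of $x$, the class of $dx_i$ to $d_{K^s}(a_i)$ where $a_i := x_i(x) \in K^s$. The global generation hypothesis makes $\mathrm{ev}_x$ surjective at $x$, so it remains to show $\iota_x \neq 0$.

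To verify $\iota_x \neq 0$, I would reduce to a finite separable subextension in order to invoke Lemma \ref{lemma vertical}. Since $K^s$ is algebraic over $K$, the $K^s$-point $x$ automatically factors through a closed point $z \in X$ whose residue field $\kappa(z) \subset K^s$ is finite separable over $K$; and weak separability of $\kappa(z) \subset K^s$ then gives an injection $\Omega^1_{\kappa(z)} \hookrightarrow \Omega^1_{K^s}$. Thus $\iota_x = 0$ iff the corresponding map $\iota_{x_0}$ for the factored point $x_0 \in X(\kappa(z))$ is zero, iff (by Lemma \ref{lemma vertical}) $x_0 \in X(\kappa(z))^p$. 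By the usual Galois-descent argument for the purely inseparable cover $F \colon Y \to X$ (which shows $X(\kappa(z))^p = X(\kappa(z)) \cap X(K^s)^p$), this is equivalent to $x \in X(K^s)^p$. Since $x \in X^0(K^s) = X(K^s) \setminus X(K^s)^p$, we conclude $\iota_x \neq 0$, hence $\theta_x$ is surjective.

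The main potential obstacle is making the factorization $\theta_x = \iota_x \circ \mathrm{ev}_x$ precise and checking that the splitting-dependent map $\iota_x$ really captures the geometric content of evaluating absolute differentials; once one passes to local étale coordinates this reduces to the explicit identification $dx_i \mapsto d_{K^s}(a_i)$, which plays well with Lemma \ref{lemma vertical} and Corollary \ref{cor: d = 0 iff p power}.
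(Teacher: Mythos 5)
Your proof is correct and follows essentially the same route as the paper's: both reduce to nonvanishing of $\theta_x$ via the one-dimensionality of $\Omega^1_{K^s}$, use global generation to express the local vertical differentials $dx_i$ in terms of sections of $S$, and use that $x$ is not a $p$-power point (via Lemma \ref{lemma vertical} and Corollary \ref{cor: d = 0 iff p power}) to see that some $d(x_i(x)) \neq 0$. Your factorization $\theta_x = \iota_x \circ \mathrm{ev}_x$ is just a cleaner packaging of the paper's direct computation $d x_1 = \sum_i f_i \omega_i$; the detour through the finite subextension $\kappa(z)$ is harmless but unnecessary, since Corollary \ref{cor: d = 0 iff p power} applies to $K^s$ directly.
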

\begin{proof}
We can find an affine chart $x \in U \subset X$ with $U = \Spec(K[x_1, \cdots, x_n]/(f_1, \cdots, f_k))$ where each $f_i$ has coefficients in $K^p$. Since $x$ is not a $p$-power we may assume $x_1(x) \notin (K^s)^p$. By global generation there are sections $\omega_1, \cdots, \omega_n \in S$ and local functions $f_i$ defined around $x$ such that $d x_1 = \sum_i f_i \omega_i$, from which it follows that $0 \neq \sum f_i(x) \omega_{i| x}$ and hence that some $\omega_{i| x}$ must be non-zero. Since $\Omega^1_{K^s}$ is one dimensional, this concludes the proof.
\end{proof}
We now consider the map 
\begin{align}\label{aux map 2}
    \theta \colon X^0(K^s) & \rightarrow \Pp(S^\vee)(K^s)   \\
    x & \mapsto \ker(\theta_x) \nonumber 
    \end{align}
Note that the two maps $\psi$ and $\theta$ coincide when $X$ is a curve. The fact that $\psi \neq \theta$ for $\dim(X) \geq 2$ is the reason why this case is more complicated and we needed the notion of pure points to begin with.
\begin{thm} \label{Theorem glob gen}
Assume that $X$ descends along $F_K$ and that $\Omega^1_{X/K}$ is globally generated. Let $(x_v) \in X^0(\A_K)^{\Br(X)[p]}$. Then each local component $x_v$ belongs to $X^0(K^s)$ and any two components satisfy $\theta(x_v) = \theta(x_w)$. If moreover $\theta$ is injective then $(a_v) \in X^0(K)$.
\end{thm}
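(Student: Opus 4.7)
The plan is to exploit Theorem \ref{theorem 1 intro} at level $n=0$ together with the splitting $\Omega^1_X \cong \Omega^1_{X/K} \oplus (\Omega^1_K \otimes_K \Oo_X)$ from Proposition \ref{prop split diff}, which embeds $S = H^0(X, \Omega^1_{X/K})$ as a $K$-subspace of $H^0(X, \Omega^1_X)$. For every vertical form $\omega \in S$, applying the unobstruction hypothesis to all Brauer classes $[t \omega]$ with $t \in K$ forces $(\omega_{|x_v})_v \in \Omega^1_{\A_K}$ to lie in the diagonal copy of $\Omega^1_K$. Since $K \hookrightarrow K_v$ is separable, $\Omega^1_K \hookrightarrow \Omega^1_{K_v}$ is injective, so there is a \emph{unique} $\bar\theta(\omega) \in \Omega^1_K$ with $\omega_{|x_v} = \bar\theta(\omega)$ in $\Omega^1_{K_v}$ for every $v$. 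This gives a $K$-linear map $\bar\theta : S \to \Omega^1_K$.

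Under the identification $\Omega^1_K \otimes_K K_v \cong \Omega^1_{K_v}$, the $K_v$-linear extension $\bar\theta \otimes K_v$ coincides with $\theta_{x_v}$: both are $K_v$-linear and agree on $S \otimes 1$ by construction. The proposition preceding the theorem, applied with $L = K_v$, gives surjectivity of $\theta_{x_v}$ from the global generation of $\Omega^1_{X/K}$ together with the purity of $x_v$; faithful flatness of $K_v/K$ then yields surjectivity of $\bar\theta$. Consequently $\ker(\theta_{x_v}) = \ker(\bar\theta) \otimes_K K_v$, and so $\theta(x_v) = [\ker(\bar\theta)] \in \Pp(S^\vee)(K) \subset \Pp(S^\vee)(K^s)$ is independent of $v$, proving the second assertion.

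For the first assertion, let $x \in X$ be the scheme-theoretic image of $x_v$ with residue field $\kappa(x)$. The chain $\Omega^1_K \hookrightarrow \Omega^1_{\kappa(x)} \hookrightarrow \Omega^1_{K_v}$ is injective by separability of $K \subset \kappa(x)$ and weak separability of $\kappa(x) \subset K_v$ (both part of purity). Since $\omega_{|x_v} \in \Omega^1_K$ and is the image of $\omega_{|x} \in \Omega^1_{\kappa(x)}$ in $\Omega^1_{K_v}$, one concludes $\omega_{|x} \in \Omega^1_K$ as a subspace of $\Omega^1_{\kappa(x)}$. Projecting onto $\Omega^1_{\kappa(x)/K}$ kills $\Omega^1_K$, so the assignment $\omega \mapsto \omega_{|x}^{\mathrm{rel}} \in \Omega^1_{\kappa(x)/K}$ is identically zero on $S$. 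But global generation of $\Omega^1_{X/K}$ by $S$ means the $\kappa(x)$-linear extension $S \otimes_K \kappa(x) \twoheadrightarrow \Omega^1_{\kappa(x)/K}$ is surjective; being the zero map, it forces $\Omega^1_{\kappa(x)/K} = 0$. Because $\kappa(x)/K$ is finitely generated and separable, this forces $\kappa(x)/K$ to be \emph{finite} separable; choosing an embedding $\kappa(x) \hookrightarrow K^s$ then exhibits $x_v$ as an element of $X^0(K^s)$.

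For the final assertion, assume $\theta$ is injective on $X^0(K^s)$. By Galois equivariance of $\theta$ and the fact that $[\ker(\bar\theta)] \in \Pp(S^\vee)(K)$ is $\Gal(K^s/K)$-invariant, injectivity forces each $x_v$ (viewed in $X^0(K^s)$) to be fixed by $\Gal(K^s/K)$, hence to lie in $X^0(K)$; applying injectivity again across places shows all $x_v$ coincide, so $(x_v) \in X^0(K)$ embedded diagonally. The main obstacle is the third step: the transition from the absolute condition $\omega_{|x_v} \in \Omega^1_K$ to the vanishing of $\omega_{|x}^{\mathrm{rel}}$, and then to algebraicity of $\kappa(x)/K$, requires carefully combining weak separability of $\kappa(x) \subset K_v$ (to descend the equality from $K_v$ to $\kappa(x)$) with global generation applied in the \emph{relative} form $S \otimes \kappa(x) \twoheadrightarrow \Omega^1_{\kappa(x)/K}$; the purely absolute surjectivity of $\theta_{x_v}$ is not enough.
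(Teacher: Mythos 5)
Your proposal is correct and follows essentially the same route as the paper: both arguments use Theorem \ref{theorem 1 intro} at level $n=0$ to force $\omega_{|x_v}\in\Omega^1_K$ for every vertical $\omega\in S$, then combine the factorisation $S\to\Omega^1_{\kappa(x)}\to\Omega^1_{K_v}$ (injective on the right by purity) with global generation of $\Omega^1_{X/K}$ to conclude $\Omega^1_{\kappa(x)/K}=0$, hence that each $x_v$ is supported on a finite separable closed point. The only divergence is cosmetic: for the last assertion the paper invokes Chebotarev (no nontrivial algebraic extension of $K$ embeds in every $K_v$), whereas you use Galois equivariance of $\theta$ together with the $K$-rationality of $\ker(\bar\theta)$ — both are valid one-line finishes.
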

\begin{proof}
Take a local component $x_v$ with image $z_v \in X$. By global generation, the image of $S$ in $\Omega^1_{\kappa(z_v)}$ under the restriction map generates the quotient $\Omega^1_{\kappa(z_v)/K}$ as a $\kappa(z_v)$-vector space. On the other hand, the fact that $(x_v)$ survives the Brauer-Manin obstruction implies in particular that for every $\omega \in S$ we have $\omega_{|x_v} \in \Omega^1_K \subset \Omega^1_{K_v}$. Finally, the evaluation map $S \rightarrow \Omega^1_{K_v}$ induced by $x_v$ factorises as $S \rightarrow \Omega^1_{\kappa(z_v)} \rightarrow \Omega^1_{K_v}$ and the last arrow is injective due to the fact that $x_v$ is pure. This is a contradiction unless $\Omega^1_{\kappa(z_v)/K} = 0$, that is, unless $z_v$ is a closed point in $X^0(K_v)$, necessarily separable. This proves the first assertion. The second assertion follows easily: pick another component $x_w$ with image $z_w \in X^0(K^s)$; then Theorem \ref{theorem 1 intro} implies that $\theta_{x_v} = \theta_{x_w}$ too. Finally, if $\theta$ is injective then every $x_v$ must be supported on the same point $x \in X(K^s)$. But a simple application of Chebotarev shows that $x \in X(K)$ necessarily, because there are no algebraic extension of $K$ between $K$ and $\A_K$. 
\end{proof}
\begin{question} \label{question conditions}
  Which condition in point (1) ensures that each $x_v$ is actually in $X(K)$? It would be enough that for every point $x \in X^0(K^s)$ which is not $K$-rational there is some $\omega \in S$ satisfying $\omega_{|x} \notin \Omega^1_K$. 
\end{question}
 Finally, we prove
\begin{prop}
Assume that $\Omega^1_{X/K}$ is $1$-jet ample. Then $\theta$ is injective.
\end{prop}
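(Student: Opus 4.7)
The plan is to reduce $\theta$ to a concrete $K$-linear invariant of each image point $z$. For $x\in X^0(K^s)$ with image $z\in X$, the morphism $x\colon\Spec K^s\to X$ factors canonically as $\Spec K^s\xrightarrow{\phi_x}\Spec\kappa(z)\xrightarrow{z_*}X$ for a $K$-embedding $\phi_x\colon\kappa(z)\hookrightarrow K^s$. Because $K$ is a global function field and both $\kappa(z)/K$ and $K^s/K$ are separable algebraic, $\Omega^1_{\kappa(z)}=\Omega^1_K\otimes_K\kappa(z)$ and $\Omega^1_{K^s}=\Omega^1_K\otimes_K K^s$ are one-dimensional over their respective base fields. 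Fixing a non-zero generator $dv$ of $\Omega^1_K$ and viewing $\omega\in S$ as a vertical section of $\Omega^1_X$ via Proposition \ref{prop split diff}, I will write the pullback as $z_*^*\omega=f_z(\omega)\cdot dv$ for a uniquely determined $K$-linear map $f_z\colon S\to\kappa(z)$; functoriality of pullback then gives $\theta_x(\omega)=\phi_x(f_z(\omega))\cdot dv$ for every $\omega\in S$. In particular, $\theta(x)=\theta(y)$ translates into the existence of $\lambda\in(K^s)^\times$ with
\[
\phi_x(f_{z_x}(\omega))\;=\;\lambda\,\phi_y(f_{z_y}(\omega))\quad\text{for every }\omega\in S.
\]

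The key intermediate step will be to show that \emph{for any $z$ arising as the image of a pure point, the map $f_z\colon S\to\kappa(z)$ is surjective}. To establish this I will factor $f_z$ as $S\twoheadrightarrow\Omega^1_{X/K}|_z\xrightarrow{\rho_z}\Omega^1_{\kappa(z)}$, where the first arrow is surjective by global generation of $\Omega^1_{X/K}$ (a consequence of $1$-jet ampleness) and $\rho_z$ is the natural $\kappa(z)$-linear pullback. By Lemma \ref{lemma vertical} applied to $z_*\in X(\kappa(z))\setminus X(\kappa(z))^p$, the map $\rho_z$ is non-zero, and since $\Omega^1_{\kappa(z)}$ is $1$-dimensional over $\kappa(z)$ it must therefore be surjective; trivializing by $dv$ then yields the surjectivity of $f_z$.

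It remains to rule out $\theta(x)=\theta(y)$ for $x\neq y$ in $X^0(K^s)$. If $z_x\neq z_y$, I will invoke condition (b) of $1$-jet ampleness to produce $\omega\in S$ with $\omega(z_x)=0$ and $\omega(z_y)=\xi$, where $\xi\in\Omega^1_{X/K}|_{z_y}$ is chosen so that $\rho_{z_y}(\xi)\neq 0$; then $f_{z_x}(\omega)=0$ while $f_{z_y}(\omega)\neq 0$, violating the displayed proportionality. If instead $z_x=z_y=z$ and $\phi_x\neq\phi_y$, the proportionality combined with the surjectivity of $f_z$ forces $\phi_x(a)=\lambda\,\phi_y(a)$ for every $a\in\kappa(z)$; evaluating at $a=1$ gives $\lambda=1$, whence $\phi_x=\phi_y$ on $\kappa(z)$, contradicting $x\neq y$. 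The main obstacle I anticipate is the bookkeeping around the splitting $\Omega^1_X\cong\Omega^1_K\otimes\Oo_X\oplus\Omega^1_{X/K}$ needed to make sense of the absolute pullback of a vertical section as an element of $\Omega^1_{\kappa(z)}=\Omega^1_K\otimes_K\kappa(z)$, together with the verification that $\rho_z$ is genuinely non-zero at pure points; once these are pinned down the remainder is short linear algebra.
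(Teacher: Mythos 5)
Your route is genuinely different from the paper's. The paper argues uniformly over $K^s$: it uses $1$-jet ampleness to show that for $x \neq y$ in $X(K^s)$ the evaluation $S_{K^s} \to \Omega^1_{X^s}\otimes\Oo_X/m_x \oplus \Omega^1_{X^s}\otimes\Oo_X/m_y$ is surjective, so $\psi(x)\cap\psi(y)$ has codimension $2\dim X$, and hence $\psi(x)$ and $\psi(y)$ cannot lie in a common hyperplane; since $\psi(\cdot)\subset\theta(\cdot)$ and each $\theta(\cdot)$ is a hyperplane, $\theta(x)\neq\theta(y)$. You instead factor $\theta_x$ through the residue field $\kappa(z)$ of the image point and split into two cases. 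Your Case 2 (same image point, different embeddings $\phi_x\neq\phi_y$) is correct and is in fact a cleaner, more ``arithmetic'' way to see why conjugate points are separated: the surjectivity of $f_z$ (which you establish correctly via global generation plus Lemma \ref{lemma vertical}, and which amounts to the paper's earlier proposition that $\theta_x$ is surjective) forces $\lambda=1$ and $\phi_x=\phi_y$. That part of your argument buys a conceptual explanation that the paper's uniform codimension count hides.

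The gap is in Case 1. You invoke $1$-jet ampleness for the $0$-cycle $z_x+z_y$ on $X$ over $K$, but with the paper's definition this cycle has degree $\deg(\kappa(z_x)\colon K)+\deg(\kappa(z_y)\colon K)$, which exceeds $k+1=2$ unless both image points are $K$-rational; so the surjectivity of $S\to \Omega^1_{X/K}|_{z_x}\oplus\Omega^1_{X/K}|_{z_y}$ you need is not licensed by the hypothesis as stated. Note also that this surjectivity over $K$ is strictly \emph{stronger} than what the paper uses: after base change it amounts to joint surjectivity at \emph{all} $K^s$-points lying over $z_x$ and $z_y$, whereas the paper only needs the two chosen points $x$ and $y$. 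The repair is to run your separation argument over $K^s$ on the degree-two cycle $x+y$ of $X_{K^s}$ (granting, as the paper implicitly does, that $1$-jet ampleness passes to $X_{K^s}$), i.e.\ to find $\omega\in S_{K^s}$ with $\omega(x)=0$ and $\theta_y(\omega)\neq 0$ directly --- at which point the residue-field bookkeeping is no longer needed for this case and you essentially recover the paper's proof. (Minor point: the paper's definition of $1$-jet ampleness has no ``condition (b)''; you appear to be quoting a different formulation.)
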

Recall that a vector bundle $E$ is $k$-jet ample if for every effective $0$-cycle $a_1 x_1 + \cdots + a_n x_n$ of degree $\sum_{i=1}^n a_i \deg(\kappa(x_i) \colon K) = k+1$ the evaluation map $$H^0(X, E) \rightarrow \bigoplus_{i=1}^n H^0(X, E \otimes \Oo_X/ m_{x_i}^{a_i})$$
is surjective, where $m_{x_i} \subset \Oo_X$ is the ideal sheaf of $x_i$. 
\begin{proof}
For any two points $x \neq y \in X(K^s)$ the evaluation map $$S_{K^s} \rightarrow H^0(X^s,\Omega^1_{X^s} \otimes \Oo_X/ m_{x}) \oplus H^0(X^s,\Omega^1_{X^s} \otimes \Oo_X/ m_{y}) \cong K^{d} \oplus K^d$$ is surjective by assumptions. The kernel of this map then is precisely $\psi(x) \cap \psi(y)$, which must have codimension $2 \dim(X)$. This means that for any $x \neq y$ the two spaces $\psi(x)$ and $\psi(y)$ intersect properly. But then in particular they cannot be contained in a common hyperplane, which shows that also $\theta(x) \neq \theta(y)$. 
\end{proof}
We remark that the injectivity of $\theta$ is most likely much weaker than $1$-jet ampleness.

\section{Ackowledgment}
The author was partially supported by ERC Starting Grant No. 804334. This project originated from a question posed by Alexei Skorobogatov during the BIRS–CMI workshop \textit{New Directions in Rational Points} in Chennai. The author gratefully acknowledges the hospitality of BIRS and CMI, as well as Alexei Skorobogatov’s curiosity and sustained support. We also thank Felipe Voloch for his interest in the work and for valuable comments on the article, and to Fabio Bernasconi for drawing attention to the connection between our results and Bogomolov–Sommese vanishing, as well as other geometric phenomena in positive characteristic. The author also benefited from numerous helpful discussions with Jefferson Baudin, Alapan Mukhopadhyay, and Margherita Pagano.\bibliographystyle{alpha}
\bibliography{bibliography.bib}

\end{document}